\definecolor{mygreen}{RGB}{0,128,0}
\tikzset{snake it/.style={decorate, decoration={snake, segment length=3mm, amplitude=0.5mm}}}
\tikzset{mapping/.style={decoration={
  markings,
  mark=at position #1 with {\arrow{Classical TikZ Rightarrow[length=3pt]}}},postaction={decorate}}}
\tikzset{->-i/.style={decoration={
  markings,
  mark=at position #1 with {\arrow{Classical TikZ Rightarrow[length=3pt]}}},postaction={decorate}}}
\tikzset{->-ii/.style={decoration={
  markings,
  mark=at position #1 with {\arrow{Classical TikZ Rightarrow[length=3pt]Classical TikZ Rightarrow[length=3pt]}}},postaction={decorate}}}
\tikzset{->-iii/.style={decoration={
  markings,
  mark=at position #1 with {\arrow{Classical TikZ Rightarrow[length=3pt]Classical TikZ Rightarrow[length=3pt]Classical TikZ Rightarrow[length=3pt]}}},postaction={decorate}}}
\tikzset{->-/.style={decoration={
  markings,
  mark=at position #1 with {\arrow{Latex[length=6pt,width=6pt]}}},postaction={decorate}}}
\tikzset{->-o/.style={decoration={
  markings,
  mark=at position #1 with {\arrow{Latex[length=6pt,width=6pt,open]}}},postaction={decorate}}}
\newtheorem{thm}{Theorem}[section]
\newtheorem{cnj}[thm]{Conjecture}
\newtheorem*{thm*}{Theorem}
\newtheorem*{thm11}{Theorem 1.1}
\newtheorem*{cor15}{Corollary 1.6}
\newtheorem*{cor16}{Corollary 1.7}
\newtheorem*{menger}{Menger's Theorem}
\newtheorem*{cnj*}{Conjecture}
\newtheorem*{que}{Question}
\newtheorem{cor}[thm]{Corollary}
\newtheorem{prop}[thm]{Proposition}
\newtheorem{lem}[thm]{Lemma}
\theoremstyle{definition}
\newtheorem{rem}[thm]{Remark}
\newtheorem{example}[thm]{Example}
\newcommand{\rk}{\operatorname{rk}}
\newcommand{\rr}{\operatorname{rr}}
\newcommand{\Cn}{\mathcal C_n}
\newcommand{\G}{\Gamma}
\newcommand{\Z}{\mathbb Z}
\newcommand{\N}{\mathbb N}
\newcommand{\T}{\mathcal T}
\newcommand{\A}{\mathcal A}
\newcommand{\val}{\operatorname{val}}
\newcommand{\im}{\operatorname{im}}
\newcommand{\valmy}{\val_{my}}
\newcommand{\valyc}{\val_{yc}}
\newcommand{\valmc}{\val_{mc}}
\newcommand{\SIG}{\operatorname{SIG}}
\begin{document}
\date{\today}

\title{Realizable ranks of joins and intersections of subgroups in free groups}

\author{Ignat Soroko}
\address{Department of Mathematics\\
Louisiana State University\\
Baton Rouge\\ LA 70803\\ USA}
\email{ignatsoroko@lsu.edu}

\subjclass[2010]{Primary  20E05, 20E07, 20F65, 57M07.}

\begin{abstract}
The famous Hanna Neumann Conjecture (now the Friedman--Mineyev theorem) gives an upper bound for the ranks of the intersection of arbitrary subgroups $H$ and $K$ of a non-abelian free  group. It is an interesting question to `quantify' this bound with respect to the rank of $H\vee K$, the subgroup generated by $H$ and $K$. We describe a set of realizable values $\big(\!\rk(H\vee K),\rk(H\cap K)\big)$ for arbitrary $H$, $K$, and conjecture that this locus is complete. We study the combinatorial structure of the topological pushout of the core graphs for $H$ and $K$ with the help of graphs introduced by Dicks in the context of his Amalgamated Graph Conjecture. This allows us to show that certain conditions on ranks of $H\vee K$, $H\cap K$
are not realizable, thus resolving the remaining open case $m=4$ of Guzman's ``Group-Theoretic Conjecture'' in the affirmative. This in turn implies the validity of the corresponding ``Geometric Conjecture'' on hyperbolic $3$--manifolds with a $6$--free fundamental group. Finally, we prove the main conjecture describing the locus of realizable values for the case when $\rk(H)=2$.
\end{abstract}

\maketitle	

\section{Introduction}

Let $F$ be a free group and $H,K\le F$ finitely generated subgroups. Denote $H\vee K$ the subgroup of $F$ generated by $H$ and $K$. Define the \emph{reduced rank} of $H$ by  
\[
\rr(H)=\max(0,\rk(H)-1).
\]
The famous Hanna Neumann Conjecture (now the Friedman--Mineyev theorem~\cite{Mi1}, \cite{DM}, \cite{Fr}, \cite{DF}; see also a recent proof of Jaikin-Zapirain~\cite{Jai}) states that 
\[
\rr(H\cap K)\le \rr(H)\rr(K).
\]

It is an interesting problem to try to `quantify' the possible ranks of $H\cap K$ with respect to the rank of the join $H\vee K$ of $H$ and $K$ (i.e.\ the subgroup generated by $H$ and $K$). Ideally, one wishes to determine the set of all realizable values for tuples
\[
\big(\!\rk (H\vee K),\,\rk (H\cap K)\big)
\]
for any given values of $\rk(H)$ and $\rk(K)$. It seems plausible, by the analogy with the linear algebra identity for vector spaces, $\dim(U\cap V)+\dim(U+V)=\dim(U)+\dim(V)$, that the bigger $\rk (H\cap K)$ is, the smaller $\rk (H\vee K)$ should be. Several partial results and conjectures have been made in this direction.

\medskip
In~\cite{IM} Imrich and M\"uller have proved the following: 
\begin{thm*}[Imrich--M\"uller, 1994] If $H,K$ are finitely generated subgroups of $F$ and either $H$ or $K$ is of finite index in $H\vee K$ then
\begin{equation}\label{eq1}
\rr(H\cap K)\rr(H\vee K)\le \rr(H)\rr(K).
\end{equation}
\end{thm*}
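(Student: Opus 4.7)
The plan is to reduce the inequality to two applications of the Nielsen--Schreier index formula for finitely generated free groups, glued together by the elementary orbit--stabilizer observation about the action of $K$ on the coset space of $H$ in $H\vee K$.

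Without loss of generality I assume that $H$ has finite index in $L:=H\vee K$, and write $n:=[L:H]$. First I would apply Nielsen--Schreier to the inclusion $H\le L$ of free groups, obtaining $\rr(H)=n\cdot\rr(L)$. Next, let $K$ act on the coset space $L/H$ by left multiplication; the stabilizer of the trivial coset is exactly $H\cap K$, so the $K$-orbit of $H$ has size $m:=[K:H\cap K]$, and since that orbit is a subset of $L/H$ one automatically has $m\le n$. A second application of Nielsen--Schreier, this time to the finite-index inclusion $H\cap K\le K$, then yields $\rr(H\cap K)=m\cdot\rr(K)$.

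Combining these two identities,
\[
\rr(H\cap K)\,\rr(H\vee K)\;=\;m\,\rr(K)\,\rr(L)\;\le\;n\,\rr(L)\,\rr(K)\;=\;\rr(H)\,\rr(K),
\]
so the entire statement collapses to the numerical comparison $m\le n$ provided by the orbit--stabilizer step. The degenerate cases in which $L$ or $K$ is trivial or infinite cyclic (i.e.\ $\rr(L)=0$ or $\rr(K)=0$) force both sides to vanish and can be checked separately in one line.

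There is essentially no serious obstacle in this argument: the finite-index hypothesis is precisely what allows Nielsen--Schreier to be invoked on both pairs $(H\le L)$ and $(H\cap K\le K)$, and the one nontrivial numerical input is simply that intersecting with $K$ cannot increase the index. It is worth noting that no combinatorial input about core graphs or pushouts is needed here, which is presumably why Imrich--M\"uller's theorem is recorded only as a baseline motivating \eqref{eq1}; the bulk of the present paper will have to develop such tools precisely in order to dispense with the finite-index assumption.
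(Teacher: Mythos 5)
Your argument is correct. Note that the paper does not prove this statement at all --- it is quoted as background from Imrich--M\"uller's paper \cite{IM} --- so there is no internal proof to compare against; your derivation (Schreier's index formula applied to $H\le H\vee K$ and to $H\cap K\le K$, linked by the orbit--stabilizer bound $[K:H\cap K]\le[H\vee K:H]$) is the standard classical argument for this inequality, and all three steps, including the symmetric reduction and the degenerate cases with $\rr=0$, check out.
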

\noindent
In general, without the finite index assumption, this inequality does not hold, see~\cite[Ex.~3]{IM}. Moreover, Hunt~\cite{Hun} has shown that the ratio of the left-hand side of~\eqref{eq1} to its right-hand side can be made arbitrarily large. 
Recently, Sergei Ivanov \cite[p.\,826]{Iva17} 
has posed the following open question:
\begin{que}[Ivanov, 2017]
Does inequality~\eqref{eq1} hold true if $\rr(H\cap K)$ is the maximal possible in the Friedman--Mineyev theorem, i.e.\ if $\rr(H\cap K)=\rr(H)\rr(K)>0$? Equivalently, does this assumption imply that $\rr(H\vee K)=1$?
\end{que}

\medskip
Another circle of questions about the relationship between $\rk (H\cap K)$ and $\rk (H\vee K)$ was motivated by the study of hyperbolic $3$--manifolds. 
Continuing the program started by Agol, Culler and Shalen~\cite{ACS},~\cite{CS}, Guzman~\cite{Gu} formulated the following ``Group-Theoretic Conjecture'' (GTC):
\begin{cnj*}[GTC, Guzman, 2014]
If two subgroups $H,K\le F$ both have ranks equal to $m\ge 2$, and $\rk (H\cap K)\ge m$, then $\rk (H\vee K)\le m$. 
\end{cnj*}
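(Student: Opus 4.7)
The plan is to study the locus of realizable values of $\bigl(\rk(H\vee K),\rk(H\cap K)\bigr)$ for fixed $\rk(H)=\rk(K)=m$ and to show that for $m=4$ the ``forbidden region'' $\rk(H\vee K)\ge 5,\ \rk(H\cap K)\ge 4$ is unrealizable. (The cases $m=2,3$ already follow from Imrich--M\"uller style reasoning after observing that the rank constraints force a finite-index configuration, while the cases $m\ge 5$ are already in the literature.)

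I would set up the argument in the Stallings core-graph framework. Take the based Stallings core graphs $\Gamma_H,\Gamma_K$ of $H,K$ immersed in the rose $R_n$ representing $F=F_n$, and form the pushout along basepoints $P=\Gamma_H\vee_{*}\Gamma_K$. A direct Euler characteristic computation gives $\rr(P)=\rr(H)+\rr(K)=2(m-1)$, while the Stallings folding $P\twoheadrightarrow \Gamma_{H\vee K}$ decreases the reduced rank by the number $\ell$ of ``cycle-creating'' fold identifications, yielding $\rr(H\vee K)=2(m-1)-\ell$. On the other side, the based fiber product $\Gamma_H\times_{R_n}\Gamma_K$ has a distinguished component $C_0$ with $\pi_1(C_0)\cong H\cap K$, whose edges are in natural bijection with the fold pairs that appear in $P$.

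The heart of the plan is Dicks' construction. For each fold equivalence class one forms a Dicks-type bipartite graph $\Delta$ whose connected components correspond to components of the fiber product and whose Euler characteristics bound the reduced ranks on each side. The key quantitative statement I would establish is a refined rank inequality of the form
\[
\rr(H\vee K)+\rr(H\cap K)\le \rr(H)+\rr(K)-\epsilon(\Delta),
\]
where $\epsilon(\Delta)\ge 0$ is a non-realizability defect that is strictly positive whenever $(H,K)$ would lie in the forbidden region for $m=4$.

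Assuming for contradiction that both $\rr(H\vee K)\ge 4$ and $\rr(H\cap K)\ge 3$, the inequality would force $\epsilon(\Delta)\le -1$, contradicting its non-negativity. The main obstacle, and where most of the work will go, is establishing the inequality with the right defect: a naive Euler characteristic bound (and even a direct Imrich--M\"uller style inequality) is too weak for $m=4$, so $\epsilon(\Delta)$ must be extracted from a careful case analysis of the low-valence configurations of $\Delta$, exploiting the interplay between its valence patterns, the structure of the folding sequence of $P$, and the way cycles in $C_0$ are distributed among the fold equivalence classes. I expect this enumeration to be finite but delicate, and it is the step where the specific number $m=4$ enters essentially.
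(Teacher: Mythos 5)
The first problem is that the statement you set out to prove is false for every $m\ge 5$: Hunt exhibited a counterexample for $m=5$, and Theorem~\ref{thm:real} of the paper produces, for each $m\ge 5$, subgroups with $\rk(H)=\rk(K)=m$, $\rk(H\cap K)=m$ and $\rk(H\vee K)=m+1$ (here $i=m-1$ and $a_{m-1}=\lfloor (m-1)^2/4\rfloor+1\ge m$ once $m\ge5$). So your remark that ``the cases $m\ge 5$ are already in the literature'' has the sign reversed --- what is in the literature is a disproof --- and no argument covering all $m$ can succeed. The paper accordingly proves only the one remaining open case $m=4$ (via Theorem~\ref{thm:guzman4} with $i=3$) and records the failure for $m\ge5$ in the corollary following Theorem~\ref{thm:real}. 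Your dismissal of $m=2,3$ is also not right as stated: the hypotheses do not force a finite-index configuration, and those cases were settled by Kent, Louder--McReynolds and Guzman by genuinely different arguments.

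The second, independent problem is the ``key quantitative statement.'' An inequality $\rr(H\vee K)+\rr(H\cap K)\le\rr(H)+\rr(K)-\epsilon(\Delta)$ with $\epsilon(\Delta)\ge0$ for all pairs is false: Kent's examples with $\rk(H\vee K)=2$ and $\rk(H\cap K)=(h-1)(k-1)+1$ violate it badly, as do the realizable values of Theorem~\ref{thm:real} with $i\ge4$. If instead $\epsilon$ is required to be nonnegative only ``whenever $(H,K)$ would lie in the forbidden region,'' then the claim is exactly the theorem to be proved and the argument is circular. The paper's actual mechanism for $m=4$ is of a different nature: assuming the forbidden values are realized, Theorem~\ref{thm:ranks} together with an extremal lemma on bipartite graphs forces $\Omega_{abc}$ to have a single nontrivial component isomorphic to $K_{i,i-1}$ (which is $2$--connected for $i\ge3$), with all other components singletons and every cycle of $\Omega$ confined to one of $\Omega_{ab}$, $\Omega_{bc}$, $\Omega_{ac}$; Dicks' duality then propagates copies of $K_{i,i-1}$ through the graph $\SIG(K_{i,i-1})$, Menger's theorem shows every vertex of that graph has valence $2$ except exactly one of valence $3$, and the handshake lemma gives the contradiction. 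That parity obstruction is the essential idea, and nothing in your plan points toward it.
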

\noindent
She proved that this conjecture, if true, implies the following ``Geometric Conjecture'' (GC), with $k=m+2$ (recall that a group is called \emph{$k$--free} if all of its $k$--generator subgroups are free):
\begin{cnj*}[GC, Guzman, 2014]
Let $M$ be a closed, orientable, hyperbolic $3$--manifold. If\/ $\pi_1(M)$ is $k$--free for $k\ge3$ then there exists a point $P$ in $M$ such that the set of all elements of\/ $\pi_1(M,P)$ represented by loops of length less than\/ $\log(2k-1)$ is contained in a (free) subgroup of\/ $\pi_1(M)$ of rank $\le k-3$. 
\end{cnj*}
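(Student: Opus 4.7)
The plan is to derive the Geometric Conjecture from the Group-Theoretic Conjecture via Guzman's implication stated in the paragraph above: GTC for rank $m\ge 2$ implies GC for $k=m+2$. So to obtain GC for every $k\ge 4$ it suffices to prove GTC for every $m\ge 2$; the base case $k=3$ is handled by a direct geometric argument, namely the $\log 5$--type result that in a closed orientable hyperbolic $3$--manifold with $3$--free fundamental group there is a point where every loop of length below $\log 5$ is null-homotopic (so the short-loop subgroup is trivial and its rank is $\le 0=k-3$). This reduces everything to a family of purely combinatorial statements about free groups, parametrised by $m$.

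To prove GTC for a given $m$ I would work with the Stallings core graphs $\Gamma_H$ and $\Gamma_K$ and their topological pushout $\Gamma_H\cup_{\Gamma_{H\cap K}}\Gamma_K$, whose fundamental group computes $H\vee K$. Assuming $\rk(H)=\rk(K)=m$ and $\rk(H\cap K)\ge m$, the aim is to bound the first Betti number of the pushout by $m$, i.e.\ to show $\rk(H\vee K)\le m$. For $m=2,3$ this follows from elementary fold analyses and classical intersection-counting in core graphs (going back to Burns, Tardos, Dicks, Ivanov). For $m=4$, which is the main new contribution of this paper, one invokes the graphs introduced by Dicks in the context of his Amalgamated Graph Conjecture: these encode the collapse pattern of the pushout and provide a combinatorial obstruction ruling out realisable tuples $\bigl(\rk(H\vee K),\rk(H\cap K)\bigr)$ with $\rk(H\vee K)>m$ and $\rk(H\cap K)\ge m$.

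For general $m\ge 5$, I would attempt to iterate the Dicks-graph framework developed for $m=4$, either inductively --- by deleting generators of $H$ or $K$ and tracking the resulting rank drop in the pushout --- or by directly establishing the full description of the realisable locus conjectured in the introduction of this paper. Either route would yield GTC uniformly in $m$, and Guzman's reduction would then deliver GC for every $k\ge 4$, so that in combination with the Margulis-type handling of $k=3$ the Geometric Conjecture would follow for all $k\ge 3$.

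The chief obstacle is precisely this extension to $m\ge 5$: the combinatorics of Dicks graphs explodes rapidly with $m$, the case analysis already occupies a substantial part of the paper at $m=4$, and a direct case-by-case generalisation seems infeasible. A uniform argument would probably require either a structural theorem on the realisable locus (independent of $m$) or a purely geometric replacement, e.g.\ sharper volume or sphere-packing estimates on hyperbolic $3$--manifolds that sidestep GTC altogether. Absent such advances, GC is known for $k\le 5$ by earlier work and, via the present paper, for $k=6$; the cases $k\ge 7$ remain conjectural, reflecting the open status of GTC for $m\ge 5$.
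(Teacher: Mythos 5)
The statement you are addressing is presented in the paper as a \emph{conjecture} (due to Guzman), not as a theorem with a proof; the paper itself only establishes the single instance $k=6$, as a corollary of Theorem~\ref{thm:guzman4} (GTC for $m=4$) combined with Guzman's implication. Your overall strategy --- prove the GTC for every $m\ge 2$ and feed each case into that implication --- cannot be completed, and the obstruction is not merely that ``the combinatorics explodes'': the GTC is \emph{false} for every $m\ge 5$. Hunt exhibited a counterexample for $m=5$, and Theorem~\ref{thm:real} of this paper produces, for every $m\ge 5$, subgroups $H,K$ of rank $m$ with $\rk(H\cap K)=m$ and $\rk(H\vee K)=m+1$ (one checks $\big\lfloor (m-1)^2/4\big\rfloor+1\ge m$ exactly when $m\ge 5$). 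So your description of the GTC for $m\ge 5$ as having ``open status,'' and your plan to iterate the Dicks-graph machinery to prove it uniformly in $m$, target a refuted statement; no amount of combinatorial refinement will recover GC for $k\ge 7$ by this route.

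The only viable path to the general case is the ``purely geometric replacement'' you mention in passing, and this has in fact already been carried out: as the paper notes, Guzman and Shalen proved the Geometric Conjecture in full generality without any dependence on the GTC. Your handling of the low cases is consistent with the literature the paper cites ($k=3$ from Agol--Culler--Shalen, $k=4$ from Kent and Louder--McReynolds via Culler--Shalen, $k=5$ from Guzman's GTC for $m=3$, and $k=6$ from the present paper), but the proposal as written does not constitute, and cannot be repaired into, a proof of the full conjecture.
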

\noindent
The case $m=2$ of the GTC appeared earlier as a question in a preprint of Culler and Shalen 
and was subsequently resolved in the affirmative by Kent~\cite{Ke2} and, independently, by Louder--McReynolds~\cite{LMR}. This was used in~\cite[Th.\,1.4]{CS} to prove a statement equivalent to the GC for $k=4$ and to obtain a lower bound on the volume of a closed orientable hyperbolic $3$--manifold with a $4$--free fundamental group. (The case $k=3$ of the GC and the corresponding lower bound on the volume for the case of $3$--free fundamental groups was proved earlier in~\cite[Cor.\,9.3]{ACS}.) Using results and techniques from~\cite{Ke2}, Guzman proves the GTC for $m=3$ and hence the GC for $k=5$. However, Hunt~\cite{Hun} has shown by example that the GTC is no longer true for $m=5$. Below we will show that the GTC is false for all values $m\ge 6$, but holds true for $m=4$.

It must be noted that, very recently, Guzman and Shalen~\cite{GuSha} proved the Geometric Conjecture in full generality, without dependence on the Group-Theoretic Conjecture.

\medskip
It is an easy consequence of the Hopfian property of finite rank free groups that the only possibility for $\rk(H\vee K)$ to equal the maximal possible value $\rk(H)+\rk(K)$ is to have $H\vee K\cong H*K$. Thus, in this case $\rk(H\cap K)$ must equal $0$.

\medskip
In~\cite{Ke2}, Kent proved the following inequality:
\begin{thm*}[Kent, 2009]
Let $H$ and $K$ be nontrivial finitely generated subgroups of $F$ with reduced ranks $h=\rr(H)$, $k=\rr(K)$, and $k\ge h$. Assume also that $H\cap K\ne 1$. Then 
\[
\rr(H\cap K)\le 2hk-h\rr(H\vee K).
\]
\end{thm*}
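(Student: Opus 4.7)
My plan is to argue via Stallings core graphs and the Euler characteristic of a fiber product. Let $\Gamma_H,\Gamma_K,\Gamma$ denote the Stallings core graphs of $H$, $K$, and $H\vee K$, with Euler characteristics $-h$, $-k$, and $-j$, respectively, where $j=\rr(H\vee K)$. The inclusions $H,K\hookrightarrow H\vee K$ give rise to immersions $\phi_H\colon\Gamma_H\to\Gamma$ and $\phi_K\colon\Gamma_K\to\Gamma$ whose images jointly cover $\Gamma$. Form the fiber product $P:=\Gamma_H\times_\Gamma\Gamma_K$; its component $C$ containing the basepoint $(\ast_H,\ast_K)$ satisfies $\pi_1(C)\cong H\cap K$, and since $H\cap K\ne 1$, $C$ is not contractible and $\rr(H\cap K)=-\chi(C)$. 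Setting $a_w=|\phi_H^{-1}(w)|$, $b_w=|\phi_K^{-1}(w)|$, $\alpha_e=|\phi_H^{-1}(e)|$, $\beta_e=|\phi_K^{-1}(e)|$, the Euler characteristic
\[
-\chi(P)=\sum_{e\in E(\Gamma)}\alpha_e\beta_e-\sum_{w\in V(\Gamma)}a_wb_w
\]
is governed by the immersion constraints $\alpha_e\le a_w$, $\beta_e\le b_w$ at each endpoint $w$ of $e$, together with the balance identities $\sum_w a_w-\sum_e\alpha_e=-h$, $\sum_w b_w-\sum_e\beta_e=-k$, and $|V(\Gamma)|-|E(\Gamma)|=-j$.

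After discarding the tree components of $P$ (which have $\chi=1$ and contribute nothing to $\rr(H\cap K)$), the bound $\rr(H\cap K)\le-\chi(P^{\mathrm{core}})$ reduces the theorem to an inequality in the multiplicities $a_w,b_w,\alpha_e,\beta_e$. I would attack this via a double count over the half-edges of $\Gamma$: on each half-edge at $w$ belonging to an edge $e$, the immersion inequality $\alpha_e\le a_w$ yields $\sum_e\alpha_e\beta_e\le\tfrac12\sum_w a_w\sum_{e\ni w}\beta_e$. Combining this with the core-graph identity $\sum_w(\deg_\Gamma(w)-2)=2j$, the $K$-balance $\sum_e\beta_e=k+\sum_w b_w$, and the $H$-balance $\sum_w a_w=h+\sum_e\alpha_e$, one rearranges into the target inequality $\sum_e\alpha_e\beta_e-\sum_w a_wb_w\le 2hk-hj$.

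The main obstacle is to secure the asymmetric coefficient $h$ in front of $j$ (rather than $k$ or $h+k$), which forces the immersion inequality to be used only on the $H$-side; a naive symmetric count produces a bound in terms of $h+k$ times $j$. If the direct double count leaves a stubborn remainder, I would fall back on Marshall Hall's theorem and pass to a finite cover $\widetilde\Gamma\to\Gamma$ of some degree $d$ in which $\Gamma_H$ embeds as a subgraph. In the cover, $H$ becomes a free factor of $\pi_1(\widetilde\Gamma)=H\ast F_{dj-h}$, and $H\cap K$ equals $H\cap\widetilde K$, where $\widetilde K$ is the pullback of $K$. Kurosh's subgroup theorem applied to $\widetilde K\le H\ast F_{dj-h}$ then produces a free-product decomposition whose rank accounting, together with the contributions from the non-basepoint double-coset components of the fiber product in the cover, should extract the factor $h$ multiplying $j$ via Schreier's index formula in $\widetilde\Gamma$, under the constraint $k\ge h$ which enters when comparing the principal Kurosh factor containing $H\cap K$ with the other factors.
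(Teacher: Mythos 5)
The paper does not actually prove this statement: it is quoted as background from Kent's article \cite{Ke2} (``Intersections and joins of free groups''), so there is no internal proof to compare yours against. Judged on its own terms, your proposal has a genuine gap at its central step. The setup (core graphs, the fiber product $P=\Gamma_H\times_\Gamma\Gamma_K$, the identity $-\chi(P)=\sum_e\alpha_e\beta_e-\sum_w a_wb_w$, the immersion and Euler-characteristic constraints) is all correct and standard, but the inequality $\sum_e\alpha_e\beta_e-\sum_w a_wb_w\le 2hk-h\,\rr(H\vee K)$ --- which \emph{is} the theorem --- is never derived. The half-edge double count you sketch gives $\sum_e\alpha_e\beta_e\le\sum_w a_w\bigl(\tfrac12\sum_{v\in\phi_K^{-1}(w)}\deg v\bigr)$, hence $-\chi(P)\le\tfrac12\sum_w a_w\sum_{v\mapsto w}(\deg v-2)$; since $\sum_v(\deg v-2)=2k$, this at best yields a Burns-type bound of the form $(\max_w a_w)\cdot k$, with no dependence on $\rr(H\vee K)$ at all. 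The term $-h\,\rr(H\vee K)$ can only enter through the fact that the images of $\phi_H$ and $\phi_K$ jointly cover $\Gamma$ (so $\alpha_e+\beta_e\ge1$ for every edge and similarly for vertices), which you state but never feed into the count. You correctly identify the asymmetric coefficient $h$ as the crux, but identifying the obstacle is not the same as overcoming it, and your fallback via Marshall Hall and Kurosh is left entirely speculative (``should extract the factor $h$'').

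For reference, Kent's actual argument runs through the topological pushout $\T$ of $\G_H$ and $\G_K$ (the object recalled in Section~2.4 of this paper), exploiting $\rr(\T)\ge\rr(H\vee K)$ together with a count of how the components of the pullback distribute over the vertices and edges of $\T$; it is precisely this intermediate object that supplies the missing $\rr(H\vee K)$--dependence. If you want to complete a proof along your lines, the essential missing idea is to partition the vertices and edges of $\Gamma$ (or of $\T$) according to whether they are covered by $\G_H$, by $\G_K$, or by both, and to charge the deficit $\rr(H\vee K)$ against the $H$--side multiplicities only; without that, the inequality does not close.
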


Recently, Sergei Ivanov has improved the above estimate of Kent, see~\cite[(4.2)]{Iva} (where he mentions that inequality~\eqref{eq2} was also obtained independently by Dicks):
\begin{thm*}[Ivanov, 2018]
Let $H$ and $K$ be nontrivial finitely generated subgroups of $F$ with reduced ranks $h=\rr(H)$, $k=\rr(K)$. Then 
\begin{equation}\label{eq2}
\rr(H\cap K)\le \frac12\big(h+k-\rr(H\vee K)\big)\big(h+k-\rr(H\vee K)+1\big).
\end{equation}
\end{thm*}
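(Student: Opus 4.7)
Write $d := h+k-\ell$, where $\ell = \rr(H\vee K)$, so the claim becomes $\rr(H\cap K) \le \binom{d+1}{2}$. My plan combines two standard Stallings-graph constructions: the fiber product, whose basepoint component models $H\cap K$, and the folded pushout of the wedge $\G_H \vee \G_K$, whose ``folding defect'' measures $d$. The target shape of the right-hand side, $\binom{d+1}{2}$, suggests that each independent loop of the intersection graph should be chargeable to an unordered pair drawn from a $(d+1)$-element set of folding events.

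Fix a rose $R$ representing a basis of $F$ and let $\G_H,\G_K$ be the Stallings core graphs of $H,K$, immersed in $R$. Form the fiber product $\Pi := \G_H\times_R\G_K$; its basepoint component $C_0$ is the Stallings graph of $H\cap K$, so $\rr(H\cap K)=\rr(C_0)$. Form also the wedge $W := \G_H\vee\G_K$ at basepoints, of reduced rank $h+k$, and carry out a Stallings folding sequence $W = W_0\to W_1\to\cdots\to W_N = \G_{H\vee K}$. Since each fold either preserves the reduced rank or drops it by exactly one, and the terminal graph has reduced rank $\ell$, exactly $d$ of the folds are rank-reducing; label them $\varphi_1,\dots,\varphi_d$. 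Equivalently, the combinatorial map $q\colon W\to\G_{H\vee K}$ satisfies $\sum_e(|q^{-1}(e)|-1)-\sum_v(|q^{-1}(v)|-1)=d$.

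The core of the argument will be a correspondence that sends each independent circuit of $C_0$ to an unordered pair, with repetitions allowed, drawn from the $\varphi_i$ together with a distinguished ``basepoint'' slot---so from a $(d+1)$-element index set. Each rank-reducing fold $\varphi_i$ is witnessed by a pair of distinct reduced paths in $W$ from a common vertex that carry identical label-words; such a pair lifts to an extra route between two vertex-pairs in $\Pi$. I would then decompose any circuit of $C_0$ into an outgoing leg traversed in $\G_H$ and a returning leg traversed in $\G_K$, and show that each leg factors through one of the rank-reducing fold events (or through the trivial basepoint event), thereby producing an injection from a basis of $H_1(C_0)$ into the $\binom{d+1}{2}$ size-$2$ multisets and yielding the desired bound.

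The main obstacle is making this assignment injective: a priori a single pair $\{\varphi_i,\varphi_j\}$ could be responsible for several independent cycles of $C_0$. My plan to address this is to reorder the folding sequence, performing all rank-preserving folds first and passing through an intermediate labeled graph in the spirit of the Dicks-type graphs featured earlier in the paper; only the $d$ rank-reducing identifications then remain, and the structure of $\Pi$ over this intermediate graph becomes transparent enough to verify the bound by direct combinatorial inspection. The delicate step is controlling the interaction of the $\varphi_i$ with the multiple sheets of $\Pi$, and in particular tracking how the ``$+1$'' in $\binom{d+1}{2}$ emerges from the distinguished basepoint component; this, together with the injectivity verification, I expect to be the technical heart of the proof.
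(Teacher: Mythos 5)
Your write-up is a plan rather than a proof: the statement you need to establish --- that the independent cycles of the basepoint component $C_0$ of the fiber product inject into pairs of rank-reducing fold events --- is exactly the content of the theorem, and you explicitly defer both the construction of the assignment and its injectivity to ``the technical heart of the proof.'' Nothing in the text closes that gap. Two concrete problems stand out. First, the charging scheme is not well defined as described: a cycle in $C_0$ does not decompose into ``an outgoing leg traversed in $\G_H$ and a returning leg traversed in $\G_K$''; every edge of the fiber product is simultaneously an edge of $\G_H$ and an edge of $\G_K$, so a circuit of $C_0$ projects to a closed circuit in each factor and there is no canonical splitting into an $H$-leg and a $K$-leg. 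Second, the count is internally inconsistent: size-$2$ multisets from a $(d+1)$-element set number $\binom{d+2}{2}$, not $\binom{d+1}{2}$, so ``with repetitions allowed'' overshoots the target bound; you would need genuine $2$-element subsets, and then the role of the ``basepoint slot'' has to be re-examined. The setup itself (fiber product, exactly $d$ rank-reducing folds, the Euler-characteristic identity for $q$) is correct, but none of it yet constrains $\rr(C_0)$.

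For comparison, the route this paper takes (following Ivanov's own reasoning, via the Dicks graphs of Section~\ref{sec:dicks}) avoids any cycle-by-cycle charging. One forms the bipartite graph $\Omega_{abc}$ whose vertices are the trivalent vertices of $\G_H$ and $\G_K$ ($2h$ and $2k$ of them, by an Euler characteristic count) and whose edges are the trivalent vertices of $\G_{H\cap K}$ ($2\rr(H\cap K)$ of them); Theorem~\ref{thm:ranks} shows its number of connected components is at least $2\rr(\T)\ge 2\rr(H\vee K)$, so the spanning forests of its nontrivial components carry at most $2d$ edges in total, and Lemma~\ref{lem:forest} shows a bipartite graph subject to that constraint has at most $d(d+1)$ edges, the extremal case being $K_{d,d+1}$. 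This gives $2\rr(H\cap K)\le d(d+1)$ directly. Note that here $d(d+1)$ arises as the edge count of a complete bipartite graph, not as a count of pairs of fold events; if you want to salvage your approach, that reinterpretation is probably the missing idea, since it converts the injectivity problem into a purely extremal statement about bipartite graphs with a bounded spanning forest.
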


\sloppy
The last result suggests that it may be convenient to describe the locus of possible values $\big(\!\rk{(H\vee K)},\rk{(H\cap K)}\big)$ in terms of the difference between $\rr(H\vee K)$ and its largest possible value $h+k+1$. If we denote $i=h+k+1-\rr(H\vee K)$ then inequality~\eqref{eq2} reads: 
$
\rr(H\cap K)\le\frac{i(i-1)}{2}.
$

\medskip
The results mentioned so far do not guarantee that if certain numbers $(v,c)$ satisfy the respective inequalities, then there exist subgroups $H$, $K$ realizing them as $(v,c)=\big(\!\rk(H\vee K),\rk(H\cap K)\big)$. Hence these results are, in effect,  describing the regions of tuples which are \emph{non-realizable}. 
By contrast, in~\cite{Ke1}, Kent exhibited for arbitrary $h,k\ge2$ a family of subgroups $H=H(h,k,m)$ and $K=K(k)$ such that $\rk(H)=h$, $\rk(K)=k$, $\rk(H\vee K)=2$ and $\rk(H\cap K)$ takes on all possible ranks $m=0,\dots,(h-1)(k-1)+1$ allowed by the Friedman--Mineyev theorem. This answered a question of Myasnikov~\cite[(AUX1)]{BMS}.

\medskip
Our first contribution to this theme is the following addition to the region of known realizable values of $\big(\!\rk(H\vee K),\rk(H\cap K)\big)$.
\begin{thm}
\label{thm:real}
Let $F$ be a free group and let integers $h,k,c,v$ satisfy $2\le h\le k$, $2\le v\le h+k$, $0\le c\le (h-1)(k-1)+1$. Define a sequence $a_i$ as follows:
\begin{align*}
a_0&=0\,;\\
a_i&=\Big\lfloor \frac {i^2}{4}\Big\rfloor +1,\quad \text{for\quad $i=1,\dots, 2(h-1)\,;$}\\
a_i&=(h-1)(i-h+1)+1,\quad \text{for\quad $i=2(h-1),\dots,h+k-2$.}
\end{align*}
If we denote $i=h+k-v$, then for any $c\le a_i$ there exist subgroups $H,K\le F$ such that\/ $\rk(H)=h$, $\rk(K)=k$, $\rk(H\cap K)=c$ and\/ $\rk(H\vee K)=v$.
\end{thm}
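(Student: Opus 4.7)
The statement is a realizability result, so I would proceed constructively using Stallings folded graphs. Every finitely generated subgroup of $F$ is represented by a finite folded labeled graph $\Gamma$ with $\rk = 1 - \chi(\Gamma)$; the join $H \vee K$ corresponds to the folded pushout of $\Gamma_H$ and $\Gamma_K$, and $H \cap K$ to the basepointed essential component of the fiber product $\Gamma_H \times_R \Gamma_K$ (with $R$ the rose on the chosen basis). Thus, for each admissible $(h,k,v,c)$, the task is to build a pair $(\Gamma_H, \Gamma_K)$ whose folded pushout has Euler characteristic $1-v$ and whose fiber product contributes total reduced rank $c-1$ (plus the trivial $+1$ when $H\cap K\ne 1$).

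First, I would construct \emph{extremal} pairs realizing $c = a_i$ for each $i = h+k-v$. The identity $a_i - 1 = \lfloor i/2 \rfloor \cdot \lceil i/2 \rceil$ in the quadratic regime is precisely the maximum of $xy$ subject to $x+y = i$, which suggests a bipartite ``grid'' identification pattern: take roses with prescribed petals subdivided, and glue $\Gamma_H$ and $\Gamma_K$ along identified vertices arranged as an $\lfloor i/2\rfloor\times\lceil i/2\rceil$ bipartite pattern. Independent loops in the fiber product then arise as ``rectangles'' in this grid, yielding $\lfloor i/2\rfloor\cdot\lceil i/2\rceil$ extra generators of $H\cap K$. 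At the transition $i=2(h-1)$ one side of the grid saturates at $h-1$ petals; beyond that point the construction becomes a strip that extends linearly in one direction, giving the formula $(h-1)(i-h+1)+1$. The endpoint $i=h+k-2$, $v=2$, is realized by Kent's family in~\cite{Ke1}, which serves both as a consistency check and as a template for the linear-regime constructions.

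Next, to fill in all $c \le a_i$ for a fixed $i$, I would define a local ``splitting'' move on the pair $(\Gamma_H, \Gamma_K)$ that severs exactly one loop in the fiber product while leaving $\chi$ of the folded pushout unchanged---concretely, relabeling a single edge so that one cycle of the fiber product opens into a tree without creating new folds in the pushout. Iterating this move $a_i - c$ times converts the extremal example into one realizing any prescribed $c\in\{0,1,\dots,a_i\}$, with $h, k, v$ preserved throughout.

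The main obstacle is the simultaneous control of $\chi(\Gamma_{H\vee K})$ and $\chi$ of the essential component of $\Gamma_H\times_R\Gamma_K$: these are in tension because more identifications lower the join rank but raise the intersection rank, and achieving a prescribed combination requires a carefully designed labeling so that (a) the constructed graphs are actually folded, so no unintended collapses occur, and (b) the fiber product's essential component is connected with exactly the predicted Euler characteristic. Handling the transition between the quadratic and linear regimes at $i=2(h-1)$ is the subtlest bookkeeping step, since the shape of the extremal graph changes there even though the formulas agree in value; a uniform description of the family across this boundary is probably the hardest part of the argument.
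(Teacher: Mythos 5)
Your plan takes a genuinely different route from the paper's, but as written it has two real gaps, both located exactly where you yourself flag ``the main obstacle.'' First, the extremal grid construction is only asserted: you never exhibit a labeling for which (a) the folded pushout has rank \emph{exactly} $v$ --- folding can only decrease rank, so you must certify that no identifications beyond the prescribed grid pattern occur --- and (b) the basepointed component of the fiber product is connected and carries exactly the $\lfloor i/2\rfloor\cdot\lceil i/2\rceil$ independent ``rectangle'' loops and no more. These two requirements pull against each other, and verifying them is the entire content of such a construction. Second, the ``splitting move'' that lowers $c$ by one while freezing $h,k,v$ is not defined: relabeling a single edge generically changes the folded pushout as well as the fiber product, and you give no mechanism ensuring that it severs one loop of $\G_{H\cap K}$ without altering $\rk(H\vee K)$. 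Without these two ingredients the argument does not close; it is a plausible research plan rather than a proof.

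The paper sidesteps both difficulties. It treats Kent's theorem~\cite{Ke1} as a black box supplying, on every page $(h,k)$, the \emph{full} interval $c=0,\dots,(h-1)(k-1)+1$ at $v=2$, and it propagates realizable tuples between pages by the essentially trivial operation of wedging a fresh free generator onto $\G_H$ and/or $\G_K$ at the basepoint: adding a generator to $H$ alone sends $(h,k;v,c)$ to $(h+1,k;v+1,c)$ with no folding analysis required, since the new loop is disjoint from everything. The sequence $a_i$ then emerges from optimizing the \emph{order} of these operations (alternate adding to $H$ and to $K$ until page $(h,h)$, then add only to $K$), and the full range $c\le a_i$ is automatic because each row at height $i$ is inherited from some earlier page's $v=2$ row, which is a complete interval by Kent --- so no downward ``splitting'' induction is ever needed. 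Your identity $a_i-1=\lfloor i/2\rfloor\cdot\lceil i/2\rceil=\max\{xy : x+y=i\}$ is precisely the combinatorial heart of that optimization, so your intuition about the source of the quadratic regime is correct; but to salvage your direct construction you would have to carry out the folded-pushout and fiber-product verifications that the paper's bootstrap renders unnecessary.
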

\noindent
The sequence $a_i$ from this theorem is a splicing of a discrete quadratic function and a linear function. The linear part exists only if $h<k$. Written in terms of reduced ranks, the quadratic part implies: ${\rr(H\cap K)\le \big\lfloor \frac{i^2}{4}\big\rfloor}$, which is smaller than Ivanov's upper bound $\frac{i(i-1)}{2}$ above, 
and the gap between the two becomes unbounded as $i$ grows. If $h=2$, the quadratic part trivializes and the sequence $a_i$ becomes especially simple: $a_i=i$ for all $i$, see Figure~\ref{fig:2.10}. 

The realizable values from Theorem~\ref{thm:real} allow us to establish the following.
\begin{cor}
Guzman's ``Group-Theoretic Conjecture'' does not hold for any $m\ge 5$.
\end{cor}

\medskip
Figure~\ref{fig:intro} depicts all the regions described above for the case $\rk(H)=\rk(K)=6$.

\begin{figure}
\begin{center}
\begin{tikzpicture}[scale=0.9]
{\small
\draw (6.5cm,0.75cm) node {$\rk (H\cap K)$};
\draw (-1.3cm,-2cm) node [rotate=90] {$\rk (H\vee K)$};
\draw (-0.5cm,-4.82cm) node {{\tiny $(i=0)$}};
\draw (-0.5cm,-4.35cm) node {{\tiny $(i=1)$}};
\draw (-0.5cm,-3.88cm) node {{\tiny $(i=2)$}};
\draw (-0.5cm,-3.45cm) node {{\tiny $(i=3)$}};
\draw (-0.5cm,-0.25cm) node {{\tiny $(i=10)$}};
\draw (-0.5cm,-1.85cm) node {{\Large $\vdots$}};

}
{\tiny
\Ylinecolour{black!50}
\Yfillcolour{black!10}
\tgyoung(0cm,0cm,%
::::::::::::::::::::::::::::,%
::::::::::::::::::::::::::::,%
:::::::::::::::::::::::;;;;;,%
:::::::::::::::::::;;;;;;;;;,%
:::::::::::::::;;;;;;;;;;;;;,%
::::::::::::;;;;;;;;;;;;;;;;,%
:::::::::;;;;;;;;;;;;;;;;;;;,%
:::::::;;;;;;;;;;;;;;;;;;;;;,%
:::::;;;;;;;;;;;;;;;;;;;;;;;,%
::::;;;;;;;;;;;;;;;;;;;;;;;;,%
:::;;;;;;;;;;;;;;;;;;;;;;;;;,%
::;;;;;;;;;;;;;;;;;;;;;;;;;;%
)
\tgyoung(0cm,0cm,::<0>:1:2:3:4:5:6:7:8:9:<10>:<{1}1>:<12>:<13>:<14>:<15>:<16>:<17>:<18>:<19>:<20>:<21>:<{2}2>:<23>:<24>:<25>:<26>,%
:2;;;;;;;;;;;;;;;;;;;;;;;;;;;,%
:3;;;;;;;;;;;;;;;;;;;;;;,%
:4;;;;;;;;;;;;;;;;;;,%
:5;;;;;;;;;;;;;;,%
:6;;;;;;;;;;;,%
:7;;;;;;;;,%
:8;;;;;;,%
:9;;;;,%
:<10>;;;,%
:<{1}1>;;,%
:<12>;%
)
\Ylinecolour{black!25}
\Yfillcolour{black!50}
\Ynodecolour{white}
\tgyoung(0cm,0cm,::::::::::::::::::::::::::::,%
:;;;;;;;;;;;;;;;;;;;;;;;;;;;,%
:;;;;;;;;;;;;;;;;;;;;;;,%
:;;;;;;;;;;;;;;;;;;,%
:;;;;;;;;;;;;;;,%
:;;;;;;;;;;;,%
:;;;;;;;;,%
:;;;;;;,%
:;;;;,%
:;;;,%
:;;,%
:;%
)
\Ynodecolour{black}
\Yfillcolour{white}
\tgyoung(0cm,0cm,%
::::::::::::::::::::::::::::,%
::::::::::::::::::::::::::::,%
::::::::::::::::::::::::::::,%
::::::::::::::::::::::::::::,%
:::::::::::::::::::::::;;;;;,%
:::::::::::::::::;;;;;;;;;;;,%
::::::::::::;;;;;;;;;;;;;;;;,%
::::::::;;;;;;;;;;;;;;;;;;;;,%
:::::;;;;;;;;;;;;;;;;;;;;;;;,%
::::;;;;;;;;;;;;;;;;;;;;;;;;,%
:::;;;;;;;;;;;;;;;;;;;;;;;;;,%
::;;;;;;;;;;;;;;;;;;;;;;;;;;%
)
\begin{scope}[black,thick]
\draw (13pt,-143pt) -- ++(351pt,0) -- ++ (0,143pt) -- ++(-351pt,0) -- ++(0,-143pt);
\draw (26pt,-143pt) -- ++(0,13pt) -- ++(338pt,0); 
\draw (13pt,-13pt) -- ++(351pt,0);
\draw[mygreen] (364pt,-13pt) -- ++(-169pt,0) -- ++(0,-13pt) -- ++(-52pt,0)-- ++(0,-13pt) -- ++(-26pt,0) -- ++(0,-13pt) -- ++(-13pt,0)-- ++(0,-13pt)-- ++(-13pt,0)-- ++(0,-13pt)-- ++(-13pt,0)-- ++(0,-26pt) -- ++(-13pt,0) -- ++(0,-39pt);
\draw[mygreen] (323pt,-20pt) node {\small {\sl ineq.}\,(1)};
\draw (39pt,-130pt) -- ++(325pt,0) -- ++(0,65pt) -- ++(-65pt,0) -- ++(0,-13pt) -- ++(-65pt,0) -- ++(0,-13pt) -- ++(-65pt,0)
-- ++(0,-13pt) -- ++(-65pt,0) -- ++(0,-13pt) -- ++(-65pt,0) -- ++(0,-13pt) -- ++(-13pt,0);
\draw (364pt,-65pt) -- ++ (0,26pt) -- ++(-52pt,0) -- ++(0,-13pt) -- ++(-78pt,0) -- ++(0,-13pt) -- ++(-65pt,0) -- ++(0,-13pt)
-- ++(-52pt,0) -- ++(0,-13pt) -- ++(-39pt,0) -- ++(0,-13pt) -- ++(-26pt,0) -- ++(0,-13pt);
\draw (299pt,-13pt) -- ++(0,-13pt) -- ++(-52pt,0) -- ++(0,-13pt) -- ++(-52pt,0)-- ++(0,-13pt) -- ++(-39pt,0)-- ++(0,-13pt) -- ++(-39pt,0)-- ++(0,-13pt) -- ++(-26pt,0)-- ++(0,-13pt) -- ++(-26pt,0)-- ++(0,-13pt);
\draw[red] (364pt,-65pt) -- ++(-273pt,0) -- ++(0,-78pt);
\draw[red] (235pt,-71.5pt) node {\small \sl Guzman's GTC $(m=6)$};
\draw[blue] (364pt,-13pt) -- ++(-13pt,0) -- ++(0,-130pt); 
\draw[blue] (357.75pt,-64pt) node [rotate=90] {\small \sl Ivanov's\ \,question};
\draw (71.5pt,-97.5pt) node {\Large$*$};
\draw (110.5pt,-84.5pt) node {\Large$*$};
\draw (162.5pt,-71.5pt) node {\Large$*$};
\draw (227.5pt,-58.5pt) node {\Large$*$};
\draw (305.5pt,-45.5pt) node {\Large$*$};
\draw (182pt,-136.5pt) node {\small \sl Hopfian \ \ \ property};
\draw (260pt,-110.5pt) node {\small \sl Kent\,'09};
\draw (172pt,-84.5pt) node {\small \sl Ivanov\,'18};
\draw[white] (182pt,-6.5pt) node {\small \sl Kent\,'05};
\draw[white] (47pt,-58.5pt) node {\small \sl Th.\,1.1};
\end{scope}
}
\end{tikzpicture}
\caption{\label{fig:intro} The locus of realizable values for $\rk(H)=\rk(K)=6$. Dark gray area: known realizable values. White area: proved non-realizable values. Light gray area: conjecturally non-realizable values (Conjecture~\ref{cnj:real}). The cells marked with an asterisk are non-realizable by Theorem~\ref{thm:guzman4}. Guzman's GTC for $m=6$ claims that the red rectangle consists entirely of non-realizable values.}
\end{center}
\end{figure}
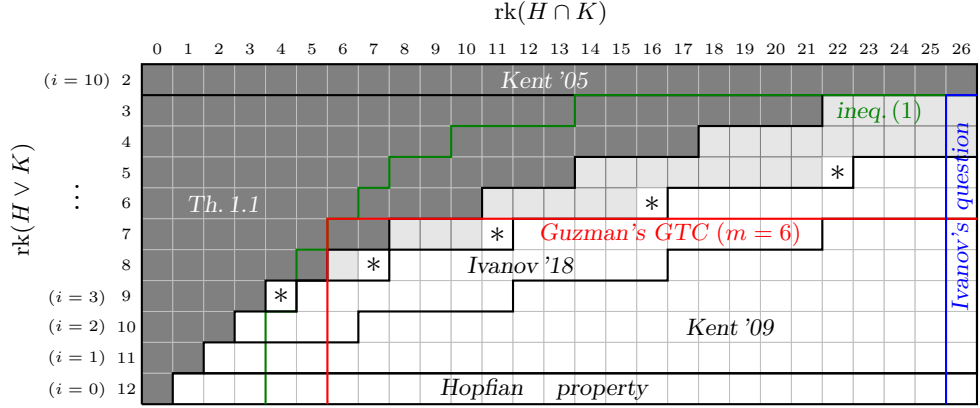

\medskip
We conjecture that the set of realizable values from Theorem~\ref{thm:real} is complete. 
\begin{cnj}\label{cnj:real}
Let $F$ be a free group and let integers $h,k,v,c$ satisfy: $2\le h\le k$, $2\le v\le h+k$ and $0\le c\le (h-1)(k-1)+1$. Then there exist subgroups $H,K\le F$ such that\/ $\rk(H)=h$, $\rk(K)=k$, $\rk(H\vee K)=v$, and\/ $\rk(H\cap K)=c$ if and only if $c\le a_i$ for $i=h+k-v$, where $a_i$ is the sequence defined in Theorem~\ref{thm:real}.
\end{cnj}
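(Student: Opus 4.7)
The forward direction is Theorem~\ref{thm:real}, so what remains is non-realizability: whenever $c > a_i$ with $i = h+k-v$, the tuple $(h,k,v,c)$ cannot be realized. My plan is to work entirely with Stallings' core graph machinery. Realize $H$ and $K$ as $\pi_1$ of folded graphs $\G_H$, $\G_K$ immersed in a rose $R_F$ for $F$; then $H \cap K$ is encoded by a distinguished component of the fibre product $\G_H \times_{R_F} \G_K$, while $H \vee K$ is encoded by the Stallings folding of the wedge $\G_H \vee \G_K$, i.e.\ the topological pushout over $R_F$. Each reduced rank is $-\chi$ of the relevant core, and $i$ equals the total drop in $-\chi$ accrued by the folds producing the pushout.

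The next step is to attach to the pushout a Dicks amalgamated graph, as indicated in the abstract, recording which edges and vertices of $\G_H$ get identified with those of $\G_K$. The Euler characteristic of the fibre product, hence $\rr(H \cap K)$, should then expand as a sum over pairs of identification data in this auxiliary graph, and the goal is to bound the sum in terms of $i$ and $h$ alone.

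The analysis then splits by regime. For $i \ge 2(h-1)$, the bound $a_i = (h-1)(i-h+1)+1$ matches Hanna Neumann--Friedman--Mineyev applied to $H$ together with a subgroup of reduced rank $i-h+1$ canonically carved from the pushout data, plus one trivial component; I would aim to exhibit exactly such a subgroup inside $K$. For $i \le 2(h-1)$, I expect extremal configurations to admit a bipartition of the identification locus into two blocks of sizes $a,b$ with $a+b \le i$, each contributing at most $ab$ to $\rr(H \cap K)$. Maximizing $ab$ yields $\lfloor i^2/4 \rfloor$, and adding one trivial component gives exactly $a_i$. The case $h=2$, which the abstract announces as proved in full, is the degenerate extremum where the bipartition collapses and $a_i = i$; this serves as both a sanity check and a model for the inductive step.

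The main obstacle is establishing the bipartition bound in the quadratic regime. It is strictly sharper than Ivanov's inequality $\rr(H\cap K)\le\tfrac{i(i-1)}{2}$ by an amount growing without bound in $i$, so it cannot follow from Euler characteristic bookkeeping alone; a structural theorem on the Dicks amalgamated graph is needed, forcing any extremal pushout to split its identifications into essentially two types. I would attempt this by induction on $h$ or via a minimal counterexample argument that surgically simplifies any non-conforming configuration while preserving the reduced rank of the intersection. The affirmative resolution of Guzman's GTC for $m=4$ obtained in the paper offers both a template for small cases and evidence that such combinatorial reductions are feasible in principle, even though the full conjecture for general $h$ remains out of reach.
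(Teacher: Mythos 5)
The statement you are trying to prove is stated in the paper as Conjecture~\ref{cnj:real}, and the paper does \emph{not} prove it: it proves only the realizability direction (Theorem~\ref{thm:real}), the full equivalence in the special case $h=2$ (Theorem~\ref{thm:h2}), and the non-realizability of a handful of extremal tuples $\big(v,c\big)=\big(h+k-i,\tfrac{i(i-1)}2+1\big)$ (Theorem~\ref{thm:guzman4}); the general non-realizability claim is supported only by computer experiment. Your proposal correctly identifies the forward direction as Theorem~\ref{thm:real} and correctly sets up the same machinery the paper uses for its partial results (core graphs, the fibre product, the topological pushout, and the Dicks graphs $\Omega_\bullet$), but the decisive step is missing. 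The ``bipartition bound'' you posit in the quadratic regime --- that any extremal configuration splits its identification locus into two blocks of sizes $a,b$ with $a+b\le i$ contributing at most $ab\le\lfloor i^2/4\rfloor$ --- is exactly the open content of the conjecture, and you offer no argument for it beyond saying you ``expect'' it and ``would attempt'' an induction or minimal-counterexample surgery. Note that the paper's own Euler-characteristic accounting (Theorem~\ref{thm:ranks} combined with Lemma~\ref{lem:forest}) only forces the extremal component of $\Omega_{abc}$ to be a complete bipartite graph $K_{i,i-1}$ with $i(i-1)$ edges, i.e.\ it recovers Ivanov's bound $\rr(H\cap K)\le\tfrac{i(i-1)}2$; the improvement to $\lfloor i^2/4\rfloor$ does not follow from that bookkeeping, as you yourself observe, and the parity argument on $\SIG(\Delta)$ that the paper uses to kill the single value $\tfrac{i(i-1)}2+1$ does not extend to the whole range $\lfloor i^2/4\rfloor+1\le c\le\tfrac{i(i-1)}2$.

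The linear regime is likewise unproved: ``canonically carving'' a subgroup of reduced rank $i-h+1$ out of $K$ from the pushout data so that Friedman--Mineyev applies is an idea, not a construction, and nothing in the paper supplies it. So the proposal is a reasonable research plan aligned with the paper's framework, but it is not a proof, and no proof of the statement exists in the paper; the honest conclusion of your own last paragraph (``the full conjecture for general $h$ remains out of reach'') is the accurate one.
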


The author tested this conjecture on a computer (searching for a possible counterexample) using a Monte-Carlo type algorithm of Bassino, Nicaud and Weil~\cite{BNW}, which randomly generates core graphs on a given number of vertices with the uniform distribution for subgroups of the given size (of their core graph) in a free group. The author learned about this algorithm from the preprint of Hunt~\cite{Hun}, and also used the computer algebra system GAP~\cite{GAP} (with the package FGA~\cite{FGA} for methods dealing with free groups) to implement it. Testing about $5\cdot 10^8$ pairs of random core graphs did not produce any values outside of the conjectured locus. 

Note that Conjecture~\ref{cnj:real} subsumes the open question of Ivanov above. 

We prove this conjecture for the special case of $h=\rk(H)=2$ (when the sequence $a_i$ from Theorem~\ref{thm:real} becomes linear: $a_i=i$ for all $i$):
\begin{thm}\label{thm:h2}
Let $F$ be a free group and let integers $k,v,c$ satisfy: $k\ge 2$, $2\le v\le k+2$ and $0\le c\le k$. Then there exist subgroups $H,K\le F$ such that\/ $\rk(H)=2$, $\rk(K)=k$, $\rk(H\vee K)=v$, and\/ $\rk(H\cap K)=c$ if and only if $c+v\le k+2$.
\end{thm}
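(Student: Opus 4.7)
The plan is to prove the two implications separately. The ``if'' direction is routine; the ``only if'' direction is the main content.

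For the ``if'' direction, I specialize Theorem~\ref{thm:real} to $h=2$. The defining sequence $a_i$ degenerates: the quadratic segment $\lfloor i^2/4\rfloor+1$ covers only $i\in\{1,2\}$ (giving $a_1=1$, $a_2=2$), while the linear segment $(h-1)(i-h+1)+1=i$ takes over for $i\ge 2$. Combined with $a_0=0$, this yields $a_i=i$ for all $i\ge 1$. Setting $i=h+k-v=k+2-v$, the hypothesis $c+v\le k+2$ becomes precisely $c\le a_i$, so Theorem~\ref{thm:real} provides the required subgroups.

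For the ``only if'' direction, I need to show that every $H,K\le F$ with $\rk H=2$ and $\rk K=k$ satisfies $\rk(H\cap K)+\rk(H\vee K)\le k+2$. My approach is the topological-pushout machinery from the body of the paper: let $P:=\Gamma_H\cup_{*}\Gamma_K$ be the basepoint wedge of the two core graphs and $\pi:P\to\Gamma_{H\vee K}$ the Stallings folding map. Since $\chi(P)=1-h-k$ and $\chi(\Gamma_{H\vee K})=1-v$ while vertex-merging (Type~I) folds preserve $\chi$, the map $\pi$ performs exactly $i:=h+k-v=k+2-v$ bigon-identifying (Type~II) folds. These folds record the parallel edges that must be identified to reach the folded join, and each one constrains the structure of the basepoint component of the fiber product $\Gamma_H\times_{R_F}\Gamma_K$, whose fundamental group is $H\cap K$.

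The crucial feature of $h=2$ is the rigidity $\chi(\Gamma_H)=-1$: $\Gamma_H$ has only one independent cycle. Combining this with the Dicks graphs attached to $\pi$ (developed in the paper's earlier sections) and a case analysis on the short list of topological shapes of a rank-$2$ core graph -- rose, theta, barbell, and their subdivisions -- I would show that the $i$ Type~II folds jointly contribute at most $i$ independent cycles to the basepoint fiber-product component. This gives $\rk(H\cap K)\le i=k+2-v$, as desired.

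The principal obstacle is this ``no conspiracy'' step: one must rule out configurations in which several Type~II folds share $\Gamma_K$-edges and together produce extra intersection loops beyond the count $i$. For general $h$ the analogous count is only quadratic in $i$ (cf.\ Ivanov's $\tfrac{i(i-1)}{2}$), so the linear bound for $h=2$ must exploit $\rr(H)=1$ decisively -- essentially by ruling out the recombinations of independent $\Gamma_H$-cycles that become available only when $\rk H\ge 3$.
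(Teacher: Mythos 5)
Your ``if'' direction is correct and is exactly the paper's: specializing Theorem~\ref{thm:real} to $h=2$ collapses the sequence to $a_i=i$, and $c\le a_i$ with $i=k+2-v$ is precisely $c+v\le k+2$.

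The ``only if'' direction, however, has a genuine gap: the entire content of the theorem is the claim you defer --- that the $i$ rank-reducing folds ``jointly contribute at most $i$ independent cycles'' to $\G_{H\cap K}$ --- and you explicitly label it ``the principal obstacle'' without carrying it out. A case analysis on the homeomorphism type of $\G_H$ (rose, theta, barbell) does not obviously close it, because the difficulty lives not in $\G_H$ but in how the pullback spreads over $\G_K$ and over the three edge types; this is exactly why the general bound is quadratic in $i$. The paper's actual mechanism is different and is what you would need to reproduce or replace: assuming $c=i+d$ with $d\ge 1$, it passes to the Dicks graph $\Omega_{abc}$, whose $V(\G_H)$-side has only $2\rr(H)=2$ vertices; this forces a single nontrivial component containing a complete bipartite subgraph $K_{2,m}$ with $m\ge 2d+1$. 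Dicks duality, encoded in the auxiliary graph $\SIG(K_{2,m})$, then yields a parity obstruction: the unique valence-$3$ vertex must be matched by a valence-$1$ vertex, i.e.\ a second copy $\Delta'$ of $K_{2,m}$ whose $m$-side sits among the singleton vertices of $\Omega_{abc}$, giving $m\le q$. Feeding this into the component connectivity graph and the bound $\Sigma(\G)\le n$ of Proposition~\ref{prop:cn} gives $\rr(\T)\le k-i-d+1$, hence $\rk(H\vee K)\le v-d<v$, a contradiction. None of these ingredients --- the $K_{2,m}$ extraction, the odd-valence parity argument forcing a duplicate copy, or the $\Sigma$-count --- appears in your outline, so the proposal as written is a plan rather than a proof.
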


The diagram in Figure~\ref{fig:2.10} shows all realizable values for $\rk(H)=2$, $\rk(K)=10$. 

\begin{figure}[ht]
\begin{center}
\begin{tikzpicture}[scale=0.9]
{\small
\draw[xshift=-100pt] (6.5cm,0.75cm) node {$c=\rk (H\cap K)$};
\draw (-1.3cm,-2.3cm) node [rotate=90] {$v=\rk (H\vee K)$};
\draw (-0.4cm,-4.82cm) node {{\tiny $(i=0)$}};
\draw (-0.4cm,-4.35cm) node {{\tiny $(i=1)$}};
\draw (-0.4cm,-0.25cm) node {{\tiny $(i=10)$}};
\draw (-0.4cm,-2.30cm) node {{\Huge $\vdots$}};
}
{\tiny
\Ylinecolour{black!25}
\Yfillcolour{white}
\tgyoung(0cm,0cm,%
:::::::::::::::::::::::::::,%
::::::::::::,%
:::::::::::;,%
::::::::::;;,%
:::::::::;;;,%
::::::::;;;;,%
:::::::;;;;;,%
::::::;;;;;;,%
:::::;;;;;;;,%
::::;;;;;;;;,%
:::;;;;;;;;;,%
::;;;;;;;;;;%
)
\Ylinecolour{black!25}
\Yfillcolour{black!50}
\tgyoung(0cm,0cm,::0:1:2:3:4:5:6:7:8:9:<10>,%
:2;;;;;;;;;;;,%
:3;;;;;;;;;;,%
:4;;;;;;;;;,%
:5;;;;;;;;,%
:6;;;;;;;,%
:7;;;;;;,%
:8;;;;;,%
:9;;;;,%
:<10>;;;,%
:<{1}1>;;,%
:<12>;%
)
}
\end{tikzpicture}
\caption{All realizable values for $\rk(H)=2$, $\rk(K)=10$.\label{fig:2.10}}
\end{center}
\end{figure}

\medskip
Our next contribution is the resolution of a few extremal cases which are not covered by the realizable values from Theorem~\ref{thm:real} and are not eliminated by Ivanov's inequality~\eqref{eq2} (see the cells marked with an asterisk in Figure~\ref{fig:intro}).
\begin{thm}\label{thm:guzman4}
Let $F$ be a free group. Then there do not exist subgroups $H,K\le F$ such that\/ $\rk(H)$, $\rk(K)\ge 2$, $\rk(H\vee K)=\rk(H)+\rk(K)-i$ for some $i\ge 3$, and\/ $\rk(H\cap K)=\frac{i(i-1)}{2}+1$. 
\end{thm}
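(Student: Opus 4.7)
First I would reformulate the statement in reduced-rank form. Setting $h = \rr(H)$, $k = \rr(K)$ and reusing the parameter $i = h + k + 1 - \rr(H \vee K)$ from the introduction, the hypothesis $\rk(H \vee K) = \rk(H) + \rk(K) - i$ matches this $i$, and the hypothesis $\rk(H \cap K) = \tfrac{i(i-1)}{2} + 1$ becomes $\rr(H \cap K) = \binom{i}{2}$. Thus the theorem asserts that Ivanov's inequality \eqref{eq2} is never sharp when $i \ge 3$, and the task is to rule out its equality case.

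The natural setting is the Stallings core-graph formalism used throughout the paper. Let $\Gamma_H$ and $\Gamma_K$ be the core graphs of $H$ and $K$ based at a common vertex, and form their topological pushout $P = \Gamma_H \vee \Gamma_K$ over the basepoint. Folding $P$ yields $\Gamma_{H \vee K}$, and a direct Euler-characteristic count gives $\chi(\Gamma_{H \vee K}) - \chi(P) = i - 1$, so the number of folds that strictly raise $\chi$ (the ``vertex-identifying'' folds) is exactly $i - 1$. In parallel, Stallings' fiber-product description realises $\rr(H \cap K)$ (together with its conjugates) as a sum of $-\chi$ over the non-tree components of $\Gamma_H \times_{R_n} \Gamma_K$.

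Next I would invoke Dicks' combinatorial graphs — the \SIG-type gadgets that the paper develops around Dicks' Amalgamated Graph Conjecture — in order to decompose the total Euler-characteristic defect of the fiber product into a sum of local contributions attached to the vertices where folds occur in the pushout $P \to \Gamma_{H \vee K}$. Reassembling these local terms recovers Ivanov's bound \eqref{eq2}, so the equality case $\rr(H \cap K) = \binom{i}{2}$ forces every local contribution to be simultaneously maximal. Concretely, the $i - 1$ vertex-identifying folds must organise themselves into a single tight configuration on $i$ ``sheets'' in which every unordered pair of sheets meets in exactly one identified vertex, producing the required $\binom{i}{2}$ non-tree fiber-product components.

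The main obstacle will be ruling out this rigid configuration for $i \ge 3$. For $i = 3$, the extremal configuration prescribes three sheets and three pairwise identifications, and a direct case analysis at the three ``central'' vertices shows that at least one prescribed identification forces a subsequent fold along an edge that already exists in $\Gamma_{H \vee K}$; this either drops $\rr(H \vee K)$ (contradicting the value of $i$) or collapses one of the required non-tree components of the fiber product (contradicting $\rr(H \cap K) = \binom{i}{2}$). For $i \ge 4$, I expect to run an induction: select one sheet, apply Dicks' duality to excise it from the \SIG-gadget, and read off an $(i-1)$-sheet configuration that either still satisfies the extremal hypotheses (invoking the inductive step) or violates \eqref{eq2} for a smaller parameter (invoking Ivanov). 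The real technical work is the combinatorial bookkeeping at the central vertices during this excision, ensuring that valences stay consistent and that no non-tree component is accidentally created or destroyed; that is where I expect the bulk of the argument to live.
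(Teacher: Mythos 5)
Your setup is on target: the hypothesis is exactly the equality case $\rr(H\cap K)=\binom{i}{2}$ of Ivanov's inequality~\eqref{eq2}, and the paper's proof does proceed by first pinning down the rigid extremal configuration and then excluding it. The first half is Proposition~\ref{prop:recast} together with Lemma~\ref{lem:forest} (essentially Ivanov's optimization): equality forces the Dicks graph $\Omega_{abc}$ to have a \emph{single} nontrivial component isomorphic to the complete bipartite graph $K_{i,i-1}$, with all other components singleton vertices, \emph{and} forces every cycle of $\Omega$ to lie entirely in one of $\Omega_{ab}$, $\Omega_{bc}$, $\Omega_{ac}$. Your description of the extremal configuration ($i$ ``sheets'' pairwise meeting in one identified vertex, yielding $\binom{i}{2}$ non-tree fiber-product components) does not match this and is not justified; note also that $\rr(H\cap K)$ here is the reduced rank of the single basepoint component of the pullback, not a sum of $-\chi$ over non-tree components as in the SHNC setting you invoke.

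The genuine gap is in the second half, which is the heart of the theorem and is entirely deferred in your proposal (``a direct case analysis \dots shows'' for $i=3$; ``I expect to run an induction'' for $i\ge4$). The paper's exclusion argument is a single uniform parity argument, not a case analysis or an induction: one builds the subgraph isomorphism graph $\SIG(K_{i,i-1})$, whose vertices are copies of $K_{i,i-1}$ inside $A$, $B$, or $C$ and whose edges are copies inside $\Omega_a$, $\Omega_b$, $\Omega_c$ (this is where Dicks' duality, Proposition~\ref{prop:dicksdual}, propagates the one copy in $\Omega_{abc}$ through the pushout). Exactly one vertex of $\SIG(K_{i,i-1})$ has valence $3$; every other vertex is shown to have valence $2$, because a valence-$1$ vertex would be a copy of $K_{i,i-1}$ meeting two of the sets $\Omega_{ab}\setminus\Omega_{abc}$, $\Omega_{ac}\setminus\Omega_{abc}$, $\Omega_{bc}\setminus\Omega_{abc}$, and the $2$--connectivity of $K_{i,i-1}$ for $i\ge3$ (via Menger's theorem) then produces a non-monochromatic cycle, contradicting the second extremal condition. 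A graph with exactly one vertex of odd valence violates the handshake lemma, and that is the contradiction. Your proposed induction on $i$ is unlikely to be repairable as stated: excising a ``sheet'' from the extremal configuration does not produce a configuration realized by actual subgroups $H',K'$ of the appropriate ranks, so there is no smaller instance of the theorem (or of~\eqref{eq2}) to appeal to. You would need to supply the Menger/parity mechanism, or some substitute for it, to close the argument.
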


As an immediate consequence we conclude that Guzman's ``Group-Theoretic Conjecture'' holds true for the remaining unresolved case of $m=4$:
\begin{cor}
Let $F$ be a free group. If two subgroups $H,K\le F$ both have ranks equal to\/ $4$, and\/ ${\rk (H\cap K)\ge 4}$, then\/ $\rk (H\vee K)\le 4$. 
\end{cor}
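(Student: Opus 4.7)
My plan is to deduce the corollary from Theorem~\ref{thm:guzman4} by a short case analysis on $v := \rk(H\vee K)$, using Ivanov's inequality~\eqref{eq2} and the Hopfian obstruction recalled in the introduction to handle all but one value.

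Assume for contradiction that $H, K \le F$ satisfy $\rk(H) = \rk(K) = 4$, $\rk(H\cap K) \ge 4$, and $v \ge 5$. Setting $i := \rk(H) + \rk(K) - v = 8 - v$, the possibilities are $i \in \{0, 1, 2, 3\}$, and I would treat each in turn. If $i = 0$, then $v = \rk(H) + \rk(K)$, so by the Hopfian property of finite rank free groups recalled above we must have $H\vee K \cong H * K$, which forces $\rk(H\cap K) = 0$, a contradiction. If $i \in \{1, 2\}$, then the reformulation of~\eqref{eq2} noted just after its statement, $\rr(H\cap K) \le \tfrac{i(i-1)}{2}$, yields $\rk(H\cap K) \le \tfrac{i(i-1)}{2} + 1 \le 2$, again contradicting $\rk(H\cap K) \ge 4$.

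The only borderline case is $i = 3$, where Ivanov's bound gives only the weak estimate $\rk(H\cap K) \le \tfrac{3\cdot 2}{2} + 1 = 4$. This is precisely the extremal value $\tfrac{i(i-1)}{2} + 1$ that Theorem~\ref{thm:guzman4} is designed to eliminate (with $i = 3$), so applying it sharpens the bound to $\rk(H\cap K) \le 3$, contradicting our assumption. Consequently no $H, K$ with $\rk(H\vee K) \ge 5$ can satisfy $\rk(H\cap K) \ge 4$, and the corollary follows. Essentially all of the work is concentrated in Theorem~\ref{thm:guzman4}; once it is in hand, the main obstacle at the corollary level is merely verifying that the four values of $i$ together exhaust the cases and that $i = 3$ is exactly the value where the theorem is needed rather than the inequality~\eqref{eq2}.
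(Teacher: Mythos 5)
Your proof is correct and follows essentially the same route as the paper: the paper's own argument reduces the corollary to the non-realizability of the single tuple $\big(\!\rk(H\vee K),\rk(H\cap K)\big)=(5,4)$ by pointing at the figure of known non-realizable regions (which encodes exactly the Hopfian obstruction and Ivanov's inequality you invoke), and then applies Theorem~\ref{thm:guzman4} with $i=3$. Your case analysis on $i\in\{0,1,2,3\}$ is simply an explicit unpacking of that reduction, and each step checks out.
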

Invoking the implication theorem from~\cite{Gu}, we obtain a proof of the ``Geometric Conjecture'' for $k=6$:
\begin{cor}
Let $M$ be a closed, orientable, hyperbolic $3$--manifold. If\/ $\pi_1(M)$ is\/ $6$--free then there exists a point $P$ in $M$ such that the set of all elements of\/ $\pi_1(M,P)$ represented by loops of length less than\/ $\log(11)$ is contained in a free subgroup of\/ $\pi_1(M)$ of rank\/ at most\/ $3$.
\end{cor}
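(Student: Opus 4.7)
The plan is to obtain this corollary as a direct consequence of the implication theorem proved by Guzman in~\cite{Gu}, which asserts that, for any integer $m\ge 2$, validity of the Group-Theoretic Conjecture with parameter $m$ implies validity of the Geometric Conjecture with parameter $k=m+2$. The immediately preceding corollary establishes GTC for $m=4$, so feeding this input into Guzman's implication yields GC for $k=6$. Substituting $k=6$ into the statement of GC quoted in the introduction, the length threshold $\log(2k-1)$ becomes $\log(11)$ and the free-subgroup rank bound $k-3$ becomes $3$, matching the conclusion of the present corollary verbatim.

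Concretely, I would proceed by: (i) verifying that the form of GTC at $m=4$ supplied by the preceding corollary (namely, if $\rk(H)=\rk(K)=4$ and $\rk(H\cap K)\ge 4$ then $\rk(H\vee K)\le 4$) is exactly the hypothesis consumed by~\cite{Gu}; (ii) invoking Guzman's implication theorem to conclude GC at $k=m+2=6$; and (iii) reading off the numerical specialisations $2k-1=11$ and $k-3=3$ to match the displayed statement. No further geometric or combinatorial argument is needed, and no intermediate lemma has to be proved here.

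The hard part is not present at this stage: the genuine combinatorial content is entirely absorbed into Theorem~\ref{thm:guzman4}, which closes the single remaining case of GTC at $m=4$ not already ruled out by Ivanov's inequality~\eqref{eq2}, while the hyperbolic-geometric content is entirely absorbed into the implication theorem of~\cite{Gu} (itself an elaboration of the Margulis-lemma and decomposition techniques developed in~\cite{ACS} and~\cite{CS}). The present corollary is a purely formal composition of these two inputs.
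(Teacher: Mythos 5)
Your proposal matches the paper's argument exactly: the paper likewise derives this corollary by feeding the just-established case $m=4$ of the Group-Theoretic Conjecture into Guzman's implication theorem from~\cite{Gu} and specialising $k=m+2=6$, so that $\log(2k-1)=\log(11)$ and $k-3=3$. No further work is needed and your reading of where the real content lives (Theorem~\ref{thm:guzman4} and the geometric machinery of~\cite{Gu}) is correct.
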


\medskip
Our paper is organized as follows.

Section~\ref{sec:graphs} contains the basic definitions of graph related concepts necessary for our needs. We introduce Stallings' core graphs and describe the construction of the topological pushout of two core graphs, following Kent~\cite{Ke2}. The topological pushout is an intermediate object between the join of two core graphs $\G_H$, $\G_K$ and the core graph $\G_{H\vee K}$ of the join of the two subgroups, and the rank of the topological pushout serves as an upper bound for the rank of the join. 

In Section~\ref{sec:real} we show how to obtain all the values of $\big(\!\rk(H\vee K),\rk(H\cap K)\big)$ from Theorem~\ref{thm:real} by properly adding new generators to the family of examples exhibited by Kent in~\cite{Ke1}.

In Section~\ref{sec:dicks} we study the combinatorial structure of the topological pushout in terms of graphs introduced by Dicks in~\cite{Di} in the context of the Amalgamated Graph Conjecture. Our approach is motivated by the construction of graphs $\Upsilon$ and $Z$ in section 6 of~\cite{Di}. In Section~\ref{sec:cn} we establish a technical condition on Dicks graphs which specifies when the rank of the topological pushout is the maximal possible.

Finally, in Section~\ref{sec:guzman4}, we use the results obtained so far to prove Theorems~\ref{thm:h2},~\ref{thm:guzman4} and the consequences of the latter, the Guzman's GTC for the remaining case of $m=4$, and hence the GC for $k=6$. The key observation is that in the situation described in Theorems~\ref{thm:h2},~\ref{thm:guzman4}, the components of the corresponding Dicks graphs $\Omega_{abc}$ are `incompatible' in the sense that one component contains a highly connected subgraph (a complete bipartite graph $K_{i,i-1}$ in Theorem~\ref{thm:guzman4} and $K_{2,m}$ in Theorem~\ref{thm:h2}),  while others are singleton vertices. Dicks' duality implies the existence of an isomorphic copy of the highly connected subgraph in $\Omega$, which must be `spread' along two or more subgraphs $\Omega_{ab}$, $\Omega_{ac}$, $\Omega_{bc}$. This forces the rank of the join to be less than required, which makes the corresponding tuples from Theorems~\ref{thm:h2},~\ref{thm:guzman4} non-realizable.

\subsection*{Acknowledgments}
The author is very grateful to Jing Tao for bringing Guzman's ``Group-Theoretic Conjecture'' to his attention and for supporting him with the research assistantship from her grants (NSF grant DMS 1611758 and NSF Career grant DMS 1651963). The author extends his gratitude to Noel Brady for his interest in this project and numerous constructive discussions and to Sergei Ivanov who suggested to the author that Warren Dicks' methods may prove useful for this project. The author would also like to thank the anonymous referee for numerous suggestions which greatly improved the clarity of the exposition, and Pallavi Dani, Warren Dicks, Max Forester, Autumn Kent, and Lars Louder for their valuable remarks. Last but not least the author would like to thank Till Tantau and Matthew Fayers for creating \LaTeX{} packages {\tt Ti\emph{k}Z}~\cite{TikZ} and {\tt genyoungtabtikz}~\cite{gytt}, respectively, which proved to be extremely useful in typesetting this article.

\section{Graphs}\label{sec:graphs}

In this paper we will deal with two types of graphs: directed labeled graphs and undirected ones. Among the former are Stallings' core graphs that are used to represent finitely generated subgroups of a free group, see~\cite{Sta}, \cite{KM}. Among the latter ones, are bipartite graphs introduced by Dicks~\cite{Di} to study the structure of the intersection of two subgroups in a free group; they will be useful for the description of the topological pushout in the sense of Kent~\cite{Ke2}. We now remind the reader of the relevant definitions adapted to our needs from~\cite{Bog}, \cite{KM}, \cite{Di}.
 
\subsection{Basic definitions} A \emph{graph} $\G$ is a pair of sets $V(\G)$, $E(\G)$, where $V(\G)$ is a nonempty set of \emph{vertices} of $\G$ and $E(\G)$ is a set of \emph{(directed) edges} of $\G$ equipped with the three maps: $o\colon E(\G)\to V(\G)$, $t\colon E(\G)\to V(\G)$ and $\overline{\phantom{e}}\colon E(\G)\to E(\G)$ called the \emph{origin} map, the \emph{terminus} map and the map of taking the \emph{inverse} of an edge, respectively, with the following properties: for each $e\in E(\G)$, $\overline{\overline{e}}=e$, $\overline e\ne e$ and $o(\overline e)=t(e)$.

A \emph{morphism} between two graphs $\G$ and $\Delta$ is a map $\pi\colon \G\to\Delta$ 
that sends vertices to vertices and edges to edges and has the property that $o(\pi(e))=\pi(o(e))$, $t(\pi(e))=\pi(t(e))$ and $\pi(\overline e)=\overline{\pi(e)}$ for any edge $e\in E(\G)$.

Each graph $\G$ admits a geometric realization as a $1$--dimensional CW complex $X_\G$, with vertices of $\G$ being the $0$--cells of $X_\G$, and each pair of mutually inverse edges $e,\overline e$ of $\G$ corresponding to the two opposite orientations of the same open $1$--cell of $X_\G$.

A graph $\G$ is called \emph{directed} (or \emph{oriented}) if in each pair of its mutually inverse edges $e, \overline e$ one edge is chosen, which is called \emph{positively oriented}, and the other is called \emph{negatively oriented}. The set of all positively (negatively) oriented edges is denoted $E^+(\G)$ (respectively, $E^-(\G)$). A morphism of directed graphs $\pi\colon\G\to\Delta$ is required to send $E^+(\G)$ to $E^+(\Delta)$.

Let $\A$ be a finite alphabet, and $\A^{-1}$ be the set of formal inverses of $\A$. A \emph{directed $\A$--labeled graph} (or just a \emph{directed labeled graph}, if $\A$ is obvious from the context) is a directed graph $\G$ with a labeling $\mu\colon E(\G)\to \A\sqcup \A^{-1}$ such that $\mu(E^+(\G))\subseteq \A$ and $\mu(\overline e)=\mu(e)^{-1}$ for each $e\in E(\G)$. A morphism of directed $\A$--labeled graphs $\pi\colon\G\to\Delta$ is required to preserve the labeling, i.e.\ $\mu(\pi(e))=\mu(e)$ for each $e\in E(\G)$.

The \emph{star} of a vertex $v$ of $V(\G)$ is the set of all edges $e$ in $E(\G)$ such that $o(e)=v$. The star of $v$ can be thought of as the link of $v$ in the geometric realization of $\G$ in our context. 
The \emph{valence} $\val(v)$ of $v$ is the cardinality of the star of $v$. If $k=\val(v)$ we call the vertex $v$ \emph{$k$-valent}.

A morphism $\pi\colon \G\to\Delta$ is called 
an \emph{immersion} if its restriction to the star of each vertex of $\G$ is injective.

A \emph{path} $p$ in $\G$ is a sequence of edges $p=e_1,\dots,e_k$ of $E(\G)$ such that for each $i=2,\dots,k$, we have $o(e_i)=t(e_{i-1})$. The length of $p$ is set to be $k$ (with the case $k=0$ possible). In this situation we call vertex $x=o(e_1)$ the \emph{origin of $p$} and $y=t(e_k)$ the \emph{terminus of $p$}. We also say that $p$ is a path \emph{from $x$ to $y$}, and use notation $x{-}y$ to denote any such path.

If $\G$ is a directed labeled graph, then any path $p=e_1,\dots,e_k$ has a naturally defined label $\mu(p)=\mu(e_1)\dots\mu(e_k)$, which is a word in the alphabet $\A\sqcup\A^{-1}$. (If $k=0$ then $\mu(p)=1$, the empty word.)

The notion of the fundamental group of a graph is a combinatorial analog of the notion of the fundamental group of the geometric realization of the graph, see~\cite[Ch.~2.4]{Bog}.
In what follows, we will not distinguish graphs and their geometric realizations and will use these notions interchangeably. All directed labeled graphs will be labeled by the set $\A=\{a,b,c\}$ of free generators of a rank $3$ free group $F(a,b,c)$.

With each graph $\G$ we can associate an \emph{undirected graph} $\G_u$ which has the same set of vertices $V(\G)$ but whose set of \emph{undirected edges} is obtained by identifying each pair $\{e,\overline e\}$ of mutually inverse directed edges of $\G$ into a single equivalence class. Such an undirected edge has two vertices that are \emph{incident} to it, namely $\{o(e),t(e)\}=\{o(\overline e),t(\overline e)\}$, and these two vertices may coincide if $e$ is a loop. We also say that such vertices are \emph{adjacent} to each other. We will abuse the notation and denote an undirected edge $\{e,\overline e\}$ simply by $e$, and the set of all undirected edges of $\G_u$ also by $E(\G_u)$. A \emph{path} in an undirected graph is a sequence $p=x_1e_1x_2e_2\dots x_ke_kx_{k+1}$ of pairwise distinct vertices $x_i$ and undirected edges $e_i$ such that for all $i$, the vertices $x_i,x_{i+1}$ are incident to the edge $e_i$. We say that $p$ is a path \emph{from $x_1$ to $x_{k+1}$}, and we will also denote it as $x_1{-}x_{k+1}$. The length of it is $k$, as above, with $k=0$ possible. If we denote a subpath $e_1x_2e_2\dots x_ke_k$ as $q$, we can also write: $p=x_1qx_{k+1}$.
A \emph{cycle} in an undirected graph $\G$ is a union of a path $x_0e_0x_1e_1\dots x_k$ with an edge $e_k$ which is incident to both $x_k$ and $x_0$. We denote a cycle also as $x_0e_0x_1e_1\dots x_ke_kx_0$ and consider its length to be equal $k$.

\subsection{Core graphs represent subgroups} Let $F=F(a,b)$ be a free group of rank $2$ and let $X$ be a finite graph viewed as a $1$--dimensional CW complex such that $\pi_1(X)$ is isomorphic to $F$. Traditionally $X$ is identified with a wedge of two circles, but we will implement Dicks' approach~\cite{Di} and take $X$ to be the graph with two $0$--cells $u$, $v$ and three $1$--cells  $a$, $b$, $c$ all originating at $u$ and terminating at $v$, see Figure~\ref{fig:xgraph}. If we take $u$ as the basepoint for $X$, this realizes $F$ as a subgroup of a rank $3$ free group $F(a,b,c)$ on free generators $\{a,b,c\}$ with the inclusion $\theta\colon F(a,b)\xhookrightarrow{\quad} F(a,b,c)$ given by $a\mapsto ca^{-1}$, $b\mapsto cb^{-1}$. (Choosing $v$ as the basepoint of $X$ yields an inclusion given by $a\mapsto a^{-1}c$ and $b\mapsto b^{-1}c$.)

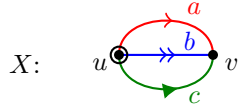
\begin{figure}
\begin{center}
\begin{tikzpicture}
\begin{scope}[scale=1.25,looseness=1.25,thick] 
\draw[->-i=0.55,color=red] (5.5,-0.5) to [out=90, in=90] (6.5,-0.5);
\draw[->-ii=0.6,color=blue] (5.5,-0.5) to [out=0, in=180] (6.5,-0.5);
\draw[->-=0.6,color=mygreen] (5.5,-0.5) to [out=-90, in=-90] (6.5,-0.5);
\fill (5.5,-0.5) circle (1.5pt); \draw (5.5,-0.5) circle (2.5pt);
\fill (6.5,-0.5) circle (1.5pt);
\draw (4.5,-0.6) node {$X$:};
\draw (5.3,-0.6) node {$u$};
\draw (6.7,-0.6) node {$v$};
\draw[color=red] (6.3,0) node {$a$};
\draw[color=blue] (6.25,-0.35) node {$b$};
\draw[color=mygreen] (6.3,-0.95) node {$c$};
\end{scope}
\end{tikzpicture}
\end{center}
\caption{The graph $X$ with $\pi_1(X,u)$ equal to $F(ca^{-1},cb^{-1})\le F(a,b,c)$.\label{fig:xgraph}}
\end{figure}

For any subgroup $H\le F$ there is a covering $\widetilde X_H\to X$ corresponding to $H$. If we fix the vertex $u$ (or $v$, for that matter) as the basepoint of $X$, there is a choice of the basepoint $x_H$ in $\widetilde X_H$ such that $\pi_1(\widetilde X_H, x_H)$ is identical to $H$. (Such choice is not unique if $\widetilde X_H$ has a nontrivial deck transformation.) Let $\G_H$ be the smallest subgraph of $\widetilde X_H$ containing $x_H$ that carries $\pi_1(\widetilde X_H,x_H)=H$. We call $(\G_H,x_H)$ the \emph{core graph} for~$H$. The vertices of $\G_H$ fall into two classes: `sources' (preimages of $u\in X$) and `sinks' (preimages of $v\in X$). Every edge of $\G_H$ is oriented from a source to a sink and inherits a unique label $a$, $b$, or $c$ induced by the covering map $\widetilde X_H\to X$. Notice that this labeling is \emph{proper} in the sense that for every vertex $x$ of $\G_H$ and each letter $\eta\in\{a,b,c\}$ there is at most one edge in $\G_H$ with the origin $x$ labeled $\eta$ and there is at most one edge in $\G_H$ with the terminus $x$ labeled $\eta$. Notice also that any vertex of $\G_H$ is at most $3$--valent, and the only vertex that may have valence $1$ is the basepoint $x_H$. 

In what follows, we will call the edges of $\G_H$ which map to the edge $a$ (respectively, $b$, $c$) of $X$, as $a$--edges (resp., $b$--edges, $c$--edges), and paint them in diagrams with red (resp., blue, green) color. (We also depict $a$--edges with a single arrow, $b$--edges with a double arrow, and $c$--edges with a solid triangular arrow in all diagrams.)

\subsection{Intersection of subgroups is represented by pullback} Let $H$, $K\le F$ be two finitely generated subgroups of $F$ and $(\G_H, x_H)$, $(\G_K,x_K)$ be the corresponding core graphs, with the natural maps $p_H\colon(\G_H, x_H)\to (X,u)$,\ \  $p_K\colon(\G_K, x_K)\to (X,u)$, which are injective on links of vertices, i.e.\ are immersions. Let $G_{H\cap K}$ be the pullback of these maps, defined as follows. The vertex set of $G_{H\cap K}$ is $V(\G_H)\times V(\G_K)$ and there is an oriented edge labeled $\eta$ ($\eta\in\{a,b,c\}$) from the vertex $(p,q)$ to the vertex $(r,s)$ in $G_{H\cap K}$ if and only if there is an edge labeled $\eta$ from $p$ to $r$ in $\G_H$ and an edge labeled $\eta$ from $q$ to $s$ in $\G_K$. The natural projections $V(\G_H)\times V(\G_K)\to V(\G_H)$ and $V(\G_H)\times V(\G_K)\to V(\G_K)$ give rise to immersions $\Pi_H\colon G_{H\cap K}\to \G_H$, $\Pi_K\colon G_{H\cap K}\to \G_K$, and the fundamental group of the component of $G_{H\cap K}$ containing the basepoint $(x_H,x_K)$ is equal to $H\cap K$, see~\cite[Th.~5.5]{Sta}. Denote by $\G_{H\cap K}$ the minimal subgraph of $G_{H\cap K}$ that contains $(x_H,x_K)$ and carries the fundamental group of the connected component of $(x_H,x_K)$. Then $\big(\G_{H\cap K},(x_H,x_K)\big)$ is the core graph for $H\cap K$. The construction guarantees that the two compositions of immersions $\big(\G_{H\cap K},(x_H,x_K)\big)\xrightarrow{\,\Pi_H\,}(\G_H,x_H)\xrightarrow{\,p_H\,} (X,u)$ and $\big(\G_{H\cap K},(x_H,x_K)\big)\xrightarrow{\,\Pi_K\,}(\G_K,x_K)\xrightarrow{\,p_K\,} (X,u)$ commute and thus define the canonical immersion $\big(\G_{H\cap K},(x_H,x_K)\big)\to (X,u)$.

\begin{example}\label{ex:main}
Figure~\ref{fig:mainpic} shows the core graphs $\G_H$, $\G_K$, $\G_{H\cap K}$ for the subgroups $H,K\le \theta(F)\le F(a,b,c)$ given by $H=\theta\big(\langle a,bab^{-1}\rangle\big)=\langle ca^{-1}, cb^{-1}ca^{-1}bc^{-1}\rangle$, $K=\theta\big(\langle b^{-1}a,ba\rangle\big)=\langle ba^{-1},cb^{-1}ca^{-1}\rangle$, and for their intersection $H\cap K=\theta\big(\langle bab^{-1}a\rangle\big)=\langle cb^{-1}ca^{-1}ba^{-1}\rangle$.
\end{example}

\begin{figure}[ht]
\begin{center}
\begin{tikzpicture}
\begin{scope}[scale=1.25]
\begin{scope}[thick]

\begin{scope}[yshift=-0.5cm]
\draw[->-i=0.55,color=red] (1,-0.5) to [out=90, in=90,looseness=0.75] (2,-0.5);
\draw[->-=0.6,color=mygreen] (1,-0.5) to [out=-90, in=-90,looseness=0.75] (2,-0.5);
\draw[->-ii=0.6,color=blue] (3,-0.5) to [out=180, in=0,looseness=1] (2,-0.5);
\draw[->-i=0.55,color=red] (3,-0.5) to [out=90, in=90,looseness=0.75] (4,-0.5);
\draw[->-=0.6,color=mygreen] (3,-0.5) to [out=-90, in=-90,looseness=0.75] (4,-0.5);
\draw (1,-0.9) node {$1$};
\draw (2,-0.9) node {$2$};
\draw (3,-0.9) node {$3$};
\draw (4,-0.9) node {$4$};
\draw (0.85,-0.25) node {$\scriptstyle x_H$};
\end{scope}

\begin{scope}[xshift=0.5cm]
\draw[->-ii=0.52,color=blue] (6,4) to [out=180, in=180,looseness=0.75] (6,1);
\draw[->-=0.65,color=mygreen] (6,4) to [out=-90, in=90,looseness=1] (6,3);
\draw[->-=0.65,color=mygreen] (6,2) to [out=-90, in=90,looseness=1] (6,1);
\draw[->-i=0.55,color=red] (6,2) to [out=180, in=180,looseness=0.75] (6,3);
\draw[->-ii=0.6,color=blue] (6,2) to [out=0, in=0,looseness=0.75] (6,3);
\draw (6.4,4) node {$5$};
\draw (6.4,3) node {$6$};
\draw (6.4,2) node {$7$};
\draw (6.4,1) node {$8$};
\draw (5.75,1.85) node {$\scriptstyle x_K$};
\end{scope}

\draw[->-=0.62,color=mygreen] (1,2) to [out=-45, in=135,looseness=1] (2,1);
\draw[->-=0.62,color=mygreen] (3,4) to [out=-45, in=135,looseness=1] (4,3);
\draw[->-o=0.6,thin,densely dashed,color=mygreen] (1,4) to [out=-45, in=135,looseness=1] (2,3);
\draw[->-o=0.6,thin,densely dashed,color=mygreen] (3,2) to [out=-45, in=135,looseness=1] (4,1);
\draw[->-i=0.55,color=red] (1,2) to [out=45, in=-135,looseness=1] (2,3);
\draw[->-i=0.55,color=red] (3,2) to [out=45, in=-135,looseness=1] (4,3);
\draw[->-ii=0.35,color=blue] (3,2) to [out=135, in=-45,looseness=1] (2,3);
\draw[->-ii=0.65,color=blue] (3,4) to (2,1);
\end{scope}

\foreach \x in {1,2,3,4}
{
		\fill (\x,-1) circle (1.5pt);
		\fill (6.5,\x) circle (1.5pt);
		
	\foreach \y in {1,2,3,4}
	{
			\fill (\x,\y) circle (1.5pt);
	}
}

\draw (1,-1) circle (2.5pt);
\draw (6.5,2) circle (2.5pt);
\draw (1,2) circle (2.5pt);

\begin{scope}[xshift=0.5cm, yshift=-0.5cm]
\draw[->-i=0.55,thick,color=red] (5.5,-0.5) to [out=90, in=90,looseness=0.75] (6.5,-0.5);
\draw[->-ii=0.6,thick,color=blue] (5.5,-0.5) to [out=0, in=180,looseness=0.75] (6.5,-0.5);
\draw[->-=0.6,thick,color=mygreen] (5.5,-0.5) to [out=-90, in=-90,looseness=0.75] (6.5,-0.5);
\fill (5.5,-0.5) circle (1.5pt);
\fill (6.5,-0.5) circle (1.5pt);
\draw (5.5,-0.9) node {$u$};
\draw (6.5,-0.9) node {$v$};
\draw (7,-0.5) node {$X$};
\draw (5.5,-0.5) circle (2.5pt);
\end{scope}

\draw (0.4,-1) node {$\G_H$};
\draw (6.5,4.5) node {$\G_K$};
\draw (0.4,2) node {$\G_{H\cap K}$};
\draw (0.3,4) node {$G_{H\cap K}$};

\draw[->] (4.55,2.5) to (5.45,2.5); 
\draw (5,2.75) node {$\Pi_K$};
\draw[->] (2.5,0.45) to (2.5,-0.35); 
\draw (2.85,0.05) node {$\Pi_H$};
\draw[->] (4.55,-1) to (5.45,-1); 
\draw (5,-0.75) node {$p_H$};
\draw[->] (6.5,0.45) to (6.5,-0.35); 
\draw (6.85,0.05) node {$p_K$};
\end{scope}

\begin{scope}[yshift=-1.5cm, xshift=2cm]
\begin{scope}[thick]
\draw[->-i=0.53,color=red] (2,-2) to [out=90, in=90,looseness=0.75] (4,-2);
\draw[->-ii=0.58,color=blue] (2,-2) to [out=0, in=-180,looseness=0.75] (4,-2);
\draw[->-=0.58,color=mygreen] (2,-2) to [out=-45, in=-135,looseness=0.75] (4,-2);
\draw[->-=0.57,color=mygreen] (2,-2) to [out=-90, in=-90,looseness=1] (4,-2);
\end{scope}
\fill (2,-2) circle (2pt);
\fill (4,-2) circle (2pt);
\draw (1,-2) node {$\{1,3,5,7\}$};
\draw (5,-2) node {$\{2,4,6,8\}$};
\draw (-0.75,-2) node {$\T$:};
\draw (2,-2) circle (3pt);
\end{scope}

\begin{scope}[xshift=1.5cm,yshift=-0.3cm]
\draw (-2,-5) node {\textit{Legend:}};
\draw (0,-5) node {$a$--edges:};
\draw[->-i=0.58,color=red,thick] (0.85,-5) to (2,-5) [out=0, in=180];
\draw (3.45,-5) node {$b$--edges:};
\draw[->-ii=0.58,color=blue,thick] (4.3,-5) to (5.45,-5) [out=0, in=180];
\draw (6.75,-5) node {$c$--edges:};
\draw[->-=0.64,color=mygreen,thick] (7.60,-5) to (8.75,-5) [out=0, in=180];
\end{scope}

\end{tikzpicture}
\end{center}
\caption{The core graphs for the subgroups 
$H,K,H\cap K$ of Example~\ref{ex:main} and their topological pushout $\T$. The thin dashed $c$--edges belong to $G_{H\cap K}$ but not to $\G_{H\cap K}$. The set of all $a$--edges of $\G_H\sqcup \G_K$ and the set of all $b$--edges of $\G_H\sqcup \G_K$ each form their own single equivalence class of edges in $\T$, whereas the $c$--edges of $\G_H\sqcup \G_K$ form two $2$--element classes of edges in $\T$: $\{(1,2),(7,8)\}$ and $\{(3,4),(5,6)\}$. 
\label{fig:mainpic}}
\end{figure}
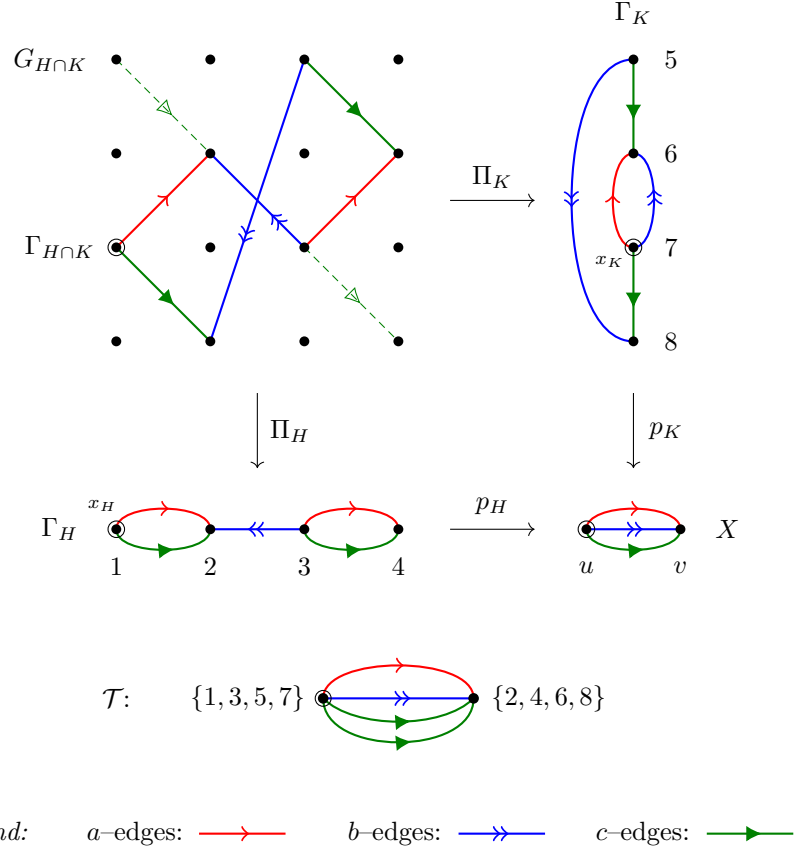

Now we show that without loss of generality we can assume that the core graphs $\G_H$, $\G_K$ and $\G_{H\cap K}$ do not have vertices of valence $1$.

\begin{lem}\label{lem:noextremal}
For any $H,K\le \theta(F)\le F(a,b,c)$ such that $H\cap K\ne 1$, we can find an element $g\in F(a,b,c)$ such that all the core graphs $\G_{H^g}$, $\G_{K^g}$, $\G_{H^g\cap K^g}$ for the conjugated subgroups $H^g$, $K^g$, $H^g\cap K^g$, respectively, do not have vertices of valence~$1$.
\end{lem}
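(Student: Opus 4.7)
My plan is to exhibit a single conjugator $g\in F(a,b,c)$ by producing one cyclically reduced witness that belongs to all three subgroups $H^g$, $K^g$, and $H^g\cap K^g$ simultaneously. Since the only vertex of any of the Stallings core graphs $\G_{H^g}$, $\G_{K^g}$, $\G_{H^g\cap K^g}$ that can have valence $1$ is the basepoint, it suffices to arrange that each of these three basepoints is at least $2$-valent.

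The key observation I would establish is the following: if $L\le F(a,b,c)$ is a finitely generated subgroup and $w\in L$ is a nontrivial cyclically reduced word, then the basepoint $x_L$ of the core graph $\G_L$ satisfies $\val(x_L)\ge 2$. To see this, suppose instead that $\val(x_L)=1$, and let $e$ be the unique edge in the star of $x_L$, with label $\lambda\in\{a^{\pm 1},b^{\pm 1},c^{\pm 1}\}$. Since the labeling of $\G_L$ is proper and $w$ is a reduced word, $w$ lifts uniquely to a reduced loop at $x_L$ in $\G_L$. The first edge of this loop must equal $e$, forcing $w$ to begin with the letter $\lambda$; dually, the last edge of the loop must equal $\bar e$, forcing $w$ to end with $\lambda^{-1}$. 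This contradicts the cyclic reducedness of $w$.

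To finish, pick any nontrivial $w_0\in H\cap K$, which exists by hypothesis. It is a standard fact in free groups that every element is a conjugate of a cyclically reduced one, so there exists $g\in F(a,b,c)$ such that $w:=g^{-1}w_0g$ is a nontrivial cyclically reduced element of $F(a,b,c)$. Then $w\in g^{-1}(H\cap K)g=H^g\cap K^g$, so $w$ lies in each of $H^g$, $K^g$, and $H^g\cap K^g$. Applying the key observation to each of these three subgroups shows that the basepoint of each of $\G_{H^g}$, $\G_{K^g}$, $\G_{H^g\cap K^g}$ has valence at least $2$, and hence none of these graphs has a vertex of valence $1$. The only real content is the key observation above, which rests only on the definition of the Stallings core graph and the fact that $\G_L\to X$ is an immersion; I do not anticipate any substantive obstacle.
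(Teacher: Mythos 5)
Your proof is correct, but it takes a genuinely different route from the paper's. The paper argues directly on the graphs: if the basepoint $p$ of $\G_{H\cap K}$ is $1$--valent, it uses $H\cap K\ne 1$ to produce a circuit and hence a $3$--valent vertex $q$, takes the shortest path $p{-}q$ with label $w$, and conjugates by $g=w^{-1}$; the effect is to move all three basepoints and prune the hanging trees, so the new core graphs are obtained explicitly from the old ones. You instead isolate a clean algebraic criterion --- a subgroup containing a nontrivial cyclically reduced word has a core graph whose basepoint is at least $2$--valent --- and apply it to a cyclic reduction of any nontrivial element of $H\cap K$; the verification via the first and last edges of the reduced loop is sound (the length-one degenerate case cannot occur here, since every loop in $X$ has even length, and in any case $e\ne\overline{e}$ rules it out). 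What the paper's version buys is the explicit description of $\G_{H^g}$, $\G_{K^g}$, $\G_{H^g\cap K^g}$ as the old graphs with hanging trees deleted, which makes Remark~\ref{rem:novalence1} (ranks unchanged, basepoints possibly now lying over $v$) immediate; your version is shorter and more conceptual. One point you should make explicit: for the core graphs of $H^g$, $K^g$ over the two-vertex graph $X$ to be defined, these subgroups must lie in $\pi_1(X,u)$ or $\pi_1(X,v)$, so $g$ cannot be an arbitrary solution of ``$g^{-1}w_0g$ is cyclically reduced'' --- it should be the standard conjugator, namely the maximal prefix of the reduced word $w_0$ with $w_0=gwg^{-1}$ reduced as written. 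Since $w_0$ labels a reduced loop at $u$ in $X$, that prefix labels a path in $X$ starting at $u$, whence $H^g,K^g$ lie in the fundamental group of $X$ based at the endpoint of that path, and your argument then goes through.
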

\begin{proof}
The only vertices in $\G_H$, $\G_K$, $\G_{H\cap K}$ that may have valence $1$ are the basepoints $x_H$, $x_K$, $(x_H, x_K)$, respectively. If $(x_H,x_K)$ has valence $2$ or more, then its projections $x_H$ and $x_K$ also have valence $2$ or more, and there is nothing to prove.

Now suppose that the basepoint $p=(x_H,x_K)$ of $\G_{H\cap K}$ has valence $1$. Since $H\cap K\ne 1$, the graph $\G_{H\cap K}$ must have a closed circuit, and the vertex $p$ does not belong to it. Therefore there exists a vertex of valence $3$ in $\G_{H\cap K}$. Let $p{-}q$ be the shortest path in $\G_{H\cap K}$ to a valence $3$ vertex $q$, and let $w\in F(a,b,c)$ be the label on this path. Then projections $\Pi_H(p{-}q)$ and $\Pi_K(p{-}q)$ are immersed paths in $\G_H$, $\G_K$, respectively, with the same label $w$ on them. Since $q$ is a valence $3$ vertex in $\G_{H\cap K}$, the vertices $q_H=\Pi_H(q)$ and $q_K=\Pi_K(q)$ also have valence $3$. However, one may have other vertices of valence $3$ on the paths $\Pi_H(p{-}q)=x_H{-}q_H$ and $\Pi_K(p{-}q)=x_K{-}q_K$. 
Let $q'_H$ be a vertex on the path $x_H{-}q_H$ defined as follows: $q'_H=x_H$, if $x_H$ has valence greater than $1$ in $\G_H$, and $q'_H$ is the vertex of valence $3$ on the path $x_H{-}q_H$ with the least distance along this path from $x_H$, otherwise. Define $q'_K$ similarly in $\G_K$. 
Then conjugating by $g=w^{-1}$ inside $F(a,b,c)$ yields a triple of subgroups $H^g, K^g, H^g\cap K^g$ such that their core graphs $\G_{H^g}$, $\G_{K^g}$, $\G_{H^g\cap K^g}$ differ from $\G_{H}$, $\G_{K}$, $\G_{H\cap K}$ by moving their basepoint to vertices $q_H$, $q_K$, and $q$, respectively, and deleting the hanging trees $p{-}q$ from $\G_{H\cap K}$ and $x_H{-}q'_H$, $x_K{-}q'_K$ from $\G_H$, $\G_K$, respectively. (Vertices $q$, $q'_H$, $q'_K$ themselves are not deleted.) This gives us a triple of the core graphs $\G_{H^g}$, $\G_{K^g}$, $\G_{H^g\cap K^g}$ for the subgroups $H^g$, $K^g$, $H^g\cap K^g$ with no vertices of valence $1$. See Figure~\ref{fig:extremal} for an illustration.
\end{proof}

\begin{rem}\label{rem:novalence1}
Since $\rk H^g=\rk H$, $\rk K^g=\rk K$, $\rk H^g\cap K^g=\rk H\cap K$, for the purposes of this paper we may assume (and will do so from now on) without loss of generality that the groups $H$, $K$, $H\cap K$ have the core graphs which do not have vertices of valence~$1$. It may happen that after the procedure described in Lemma~\ref{lem:noextremal} the basepoints of $\G_H$, $\G_K$ and $\G_{H\cap K}$ all map to the vertex $v\in X$, instead of $u$, but that does not limit generality, since we may have chosen vertex $v\in X$ as the basepoint of $X$ from the very beginning. 
\end{rem}

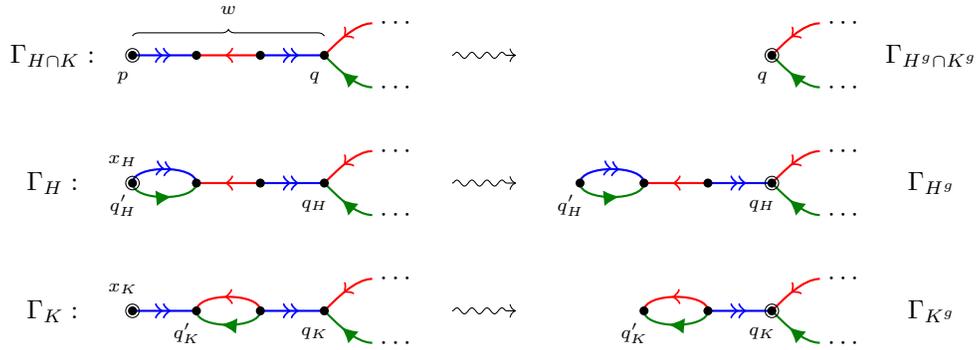
\begin{figure}[ht]
\begin{center}
\begin{tikzpicture}
\begin{scope}[thick,looseness=0.75,scale=0.85]
\draw[thin,decorate,decoration=brace] (1,-0.2) -- (4,-0.2);
\draw (2.5,0.2) node {$\scriptstyle w$};
\draw[->-ii=0.6,color=blue] (1,-0.5) to [out=0, in=180] (2,-0.5);
\draw[->-i=0.55,color=red] (3,-0.5) to [out=180, in=0] (2,-0.5);
\draw[->-ii=0.6,color=blue] (3,-0.5) to [out=0, in=180] (4,-0.5);
\draw[->-i=0.55,color=red] (4.75,0) to [out=-180, in=45] (4,-0.5);
\draw[->-=0.6,color=mygreen] (4.75,-1) to [out=180, in=-45] (4,-0.5);
\draw (5.15,0) node {$\cdots$};
\draw (5.15,-1) node {$\cdots$};
\fill (1,-0.5) circle (2pt);
\fill (2,-0.5) circle (2pt);
\fill (3,-0.5) circle (2pt);
\fill (4,-0.5) circle (2pt);
\draw[thin] (1,-0.5) circle (3pt);
\draw (0.85,-0.85) node {$\scriptstyle p$};
\draw (3.85,-0.85) node {$\scriptstyle q$};
\draw (-0.25,-0.5) node {$\G_{H\cap K}:$};
\draw [thin,->,decorate,decoration={snake,amplitude=.4mm,segment length=2mm,post length=1mm}] (6,-0.5)--(7,-0.5);
\begin{scope}[xshift=7cm]
\draw[->-i=0.55,color=red] (4.75,0) to [out=-180, in=45] (4,-0.5);
\draw[->-=0.6,color=mygreen] (4.75,-1) to [out=180, in=-45] (4,-0.5);
\draw (5.15,0) node {$\cdots$};
\draw (5.15,-1) node {$\cdots$};
\fill (4,-0.5) circle (2pt);
\draw[thin] (4,-0.5) circle (3pt);
\draw (3.85,-0.85) node {$\scriptstyle q$};
\draw (6.5,-0.5) node {$\G_{H^g\cap K^g}$};
\end{scope}

\begin{scope}[yshift=-2cm]
\draw[->-=0.6,color=mygreen] (1,-0.5) to [out=-90, in=-90] (2,-0.5);
\draw[->-ii=0.6,color=blue] (1,-0.5) to [out=90, in=90] (2,-0.5);
\draw[->-i=0.55,color=red] (3,-0.5) to [out=180, in=0] (2,-0.5);
\draw[->-ii=0.6,color=blue] (3,-0.5) to [out=0, in=180] (4,-0.5);
\draw[->-i=0.55,color=red] (4.75,0) to [out=-180, in=45] (4,-0.5);
\draw[->-=0.6,color=mygreen] (4.75,-1) to [out=180, in=-45] (4,-0.5);
\draw (5.15,0) node {$\cdots$};
\draw (5.15,-1) node {$\cdots$};
\fill (1,-0.5) circle (2pt);
\fill (2,-0.5) circle (2pt);
\fill (3,-0.5) circle (2pt);
\fill (4,-0.5) circle (2pt);
\draw[thin] (1,-0.5) circle (3pt);
\draw (0.85,-0.15) node {$\scriptstyle x_H$};
\draw (0.85,-0.85) node {$\scriptstyle q'_H$};
\draw (3.85,-0.85) node {$\scriptstyle q_H$};
\draw (-0.25,-0.5) node {$\G_{H}:$};
\draw [thin,->,decorate,decoration={snake,amplitude=.4mm,segment length=2mm,post length=1mm}] (6,-0.5)--(7,-0.5);
\begin{scope}[xshift=7cm]
\draw[->-=0.6,color=mygreen] (1,-0.5) to [out=-90, in=-90] (2,-0.5);
\draw[->-ii=0.6,color=blue] (1,-0.5) to [out=90, in=90] (2,-0.5);
\draw[->-i=0.55,color=red] (3,-0.5) to [out=180, in=0] (2,-0.5);
\draw[->-ii=0.6,color=blue] (3,-0.5) to [out=0, in=180] (4,-0.5);
\draw[->-i=0.55,color=red] (4.75,0) to [out=-180, in=45] (4,-0.5);
\draw[->-=0.6,color=mygreen] (4.75,-1) to [out=180, in=-45] (4,-0.5);
\draw (5.15,0) node {$\cdots$};
\draw (5.15,-1) node {$\cdots$};
\fill (1,-0.5) circle (2pt);
\fill (2,-0.5) circle (2pt);
\fill (3,-0.5) circle (2pt);
\fill (4,-0.5) circle (2pt);
\draw[thin] (4,-0.5) circle (3pt);
\draw (0.85,-0.85) node {$\scriptstyle q'_H$};
\draw (3.85,-0.85) node {$\scriptstyle q_H$};
\draw (6.5,-0.5) node {$\G_{H^g}$};
\end{scope}
\end{scope}

\begin{scope}[yshift=-4cm]
\draw[->-ii=0.6,color=blue] (1,-0.5) to [out=0, in=180] (2,-0.5);
\draw[->-=0.6,color=mygreen] (3,-0.5) to [out=-90, in=-90] (2,-0.5);
\draw[->-i=0.55,color=red] (3,-0.5) to [out=90, in=90] (2,-0.5);
\draw[->-ii=0.6,color=blue] (3,-0.5) to [out=0, in=180] (4,-0.5);
\draw[->-i=0.55,color=red] (4.75,0) to [out=-180, in=45] (4,-0.5);
\draw[->-=0.6,color=mygreen] (4.75,-1) to [out=180, in=-45] (4,-0.5);
\draw (5.15,0) node {$\cdots$};
\draw (5.15,-1) node {$\cdots$};
\fill (1,-0.5) circle (2pt);
\fill (2,-0.5) circle (2pt);
\fill (3,-0.5) circle (2pt);
\fill (4,-0.5) circle (2pt);
\draw[thin] (1,-0.5) circle (3pt);
\draw (0.85,-0.15) node {$\scriptstyle x_K$};
\draw (1.85,-0.85) node {$\scriptstyle q'_K$};
\draw (3.85,-0.85) node {$\scriptstyle q_K$};
\draw (-0.25,-0.5) node {$\G_{K}:$};
\draw [thin,->,decorate,decoration={snake,amplitude=.4mm,segment length=2mm,post length=1mm}] (6,-0.5)--(7,-0.5);
\begin{scope}[xshift=7cm]
\draw[->-=0.6,color=mygreen] (3,-0.5) to [out=-90, in=-90] (2,-0.5);
\draw[->-i=0.55,color=red] (3,-0.5) to [out=90, in=90] (2,-0.5);
\draw[->-ii=0.6,color=blue] (3,-0.5) to [out=0, in=180] (4,-0.5);
\draw[->-i=0.55,color=red] (4.75,0) to [out=-180, in=45] (4,-0.5);
\draw[->-=0.6,color=mygreen] (4.75,-1) to [out=180, in=-45] (4,-0.5);
\draw (5.15,0) node {$\cdots$};
\draw (5.15,-1) node {$\cdots$};
\fill (2,-0.5) circle (2pt);
\fill (3,-0.5) circle (2pt);
\fill (4,-0.5) circle (2pt);
\draw[thin] (4,-0.5) circle (3pt);
\draw (1.85,-0.85) node {$\scriptstyle q'_K$};
\draw (3.85,-0.85) node {$\scriptstyle q_K$};
\draw (6.5,-0.5) node {$\G_{K^g}$};
\end{scope}
\end{scope}

\end{scope}
\end{tikzpicture}
\end{center}
\caption{Eliminating vertices of valence $1$\label{fig:extremal} (for some generic $H$ and $K$).}
\end{figure}

\subsection{Join of subgroups and the topological pushout} \label{ssec:tp}
As was shown by Stallings~\cite{Sta}, the core graph $\G_{H\vee K}$ for the join of two subgroups is obtained by joining the core graphs for $\G_H$ and $\G_K$ at their respective basepoints and performing a sequence of identifications of edges with the same labels called \emph{foldings}:
\[
\G_H\vee \G_K\xrightarrow{\mathrm{\ foldings\ }}\G_{H\vee K}
\]
In general, the number of foldings required to produce $\G_{H\vee K}$ and the rank of $\G_{H\vee K}$ are hard to estimate directly from the information about $\G_H$, $\G_K$, without actually performing the required sequence of foldings. In~\cite{Ke2}, Kent works with an intermediate object, the \emph{topological pushout} $\T$ of $\G_H$ and $\G_K$, which fits into the diagram:
\[
\G_H\vee\G_K \xrightarrow{\mathrm{\ foldings\ }} \T \xrightarrow{\mathrm{\ foldings\ }} \G_{H\vee K}
\]
and whose rank is much easier to estimate than the rank of $\G_{H\vee K}$. 
Since the folding operation is surjective at the level of fundamental groups~\cite[Cor.~4.4]{Sta}, we also have 
\[
\rk\T\ge \rk \G_{H\vee K}. 
\]

$\T$ is defined as follows.

Let $x\in\G_H$ and $y\in\G_K$ be either two vertices or two edges of $\G_H$ and $\G_K$. The graph $\T$ is the quotient of the disjoint union $\G_H\sqcup\G_K$ by the equivalence relation generated by the following relation: $x\sim y$ if $x\in \Pi_H\big((\Pi_K|_{\G_{H\cap K}})^{-1}(y)\big)$ or $y\in \Pi_K\big((\Pi_H|_{\G_{H\cap K}})^{-1}(x)\big)$. In other words, $x\sim y$ if and only if there is an element $z$ of $\G_{H\cap K}$ such that $x$ and $y$ are the images under $\Pi_H$, $\Pi_K$, respectively, of $z$. Recall that, by construction, the vertices of $\G_{H\cap K}$ can be identified with a certain subset of $V(\G_H)\times V(\G_K)$ and the same is true for edges. Thus two elements (i.e.\ two vertices or two edges) $a,b$ of $\G_H\sqcup\G_K$ map to the same element in $\T$ if and only if there is a sequence of elements $(x_1,y_1),\dots, (x_n,y_n)$ in $\G_{H\cap K}$ with $x_i\in \G_H$, $y_i\in \G_K$ such that $a$ is either $x_1$ or $y_1$, $b$ is either $x_n$ or $y_n$ and for each $i$ either $x_i=x_{i+1}$ or $y_i=y_{i+1}$.

Equivalently, $\T$ can be obtained from the join of $\G_H$ and $\G_K$ over their respective basepoints $x_H$ and $x_K$, followed by a sequence of foldings along the edges of $\G_{H\cap K}$ only. I.e.\ we may choose a circuit (i.e.\ a closed path) $\gamma$ in $\G_{H\cap K}$ that starts at the basepoint $(x_H,x_K)$ and traverses each edge of $\G_{H\cap K}$ at least once, and perform a sequence of foldings, identifying $\Pi_H(z)\in\G_H$ with $\Pi_K(z)\in\G_K$ for $z$ running consecutively through all vertices and edges along $\gamma$. Since $\G_{H\cap K}$ is connected, a simple inductive argument shows that the result of this sequence of foldings is exactly the topological pushout $\T$ of $\G_H$ and $\G_K$.

Figure~\ref{fig:mainpic} 
shows the topological pushout for the groups $H,K$ of Example~\ref{ex:main}. We see that the topological pushout $\T$ may be different from $\G_{H\vee K}$. In Figure~\ref{fig:mainpic}, the graph $\G_{H\vee K}$ is equal to $X$ and it is obtained from $\T$ by identifying (folding) two $c$--edges.

\section{Proof of Theorem~\ref{thm:real}}\label{sec:real}

For the purposes of this section it will be convenient to assume that $F$ is a free group of countable rank so that we have a freedom to add new generators if necessary, without the need of explicitly embedding them into the free group of rank $2$. Also, for that purpose, we fix the wedge of countably many circles as the base CW complex for $F$.

We will call a tuple of values $(h,k;v,c)$ \emph{realizable}, if there exist finitely generated subgroups $H,K$ of $F$ with 
$\rk H=h$, $\rk K=k$, $\rk (H\vee K)=v$, and $\rk (H\cap K)=c$.
If the values $h,k$ (and sometimes also $v$) are clear from the context, we will also call the tuple $(v,c)$ (respectively, the number $c$) \emph{realizable}, and say that it belongs to \emph{page} $(h,k)$.

Excluding trivial cases, we may assume that $\rk H\ge 2$ and $\rk K\ge 2$, so that $\rk (H\vee K)\ge 2$. The upper bound for $\rk (H\vee K)$ is obviously $h+k$. On the other hand, the limits for $\rk (H\cap K)$ are $0$ and $(h-1)(k-1)+1$, as is stipulated by the Friedman--Mineyev theorem. 

It turns out that the set of all known realizable values $(h,k;v,c)$ can be described for any fixed $(h,k)$ by a finite sequence of nonnegative integers $(a_i)$, such that for any given $h,k,v$ all (known) realizable values of $c$ are described as the range $0\le c\le a_i$, where $i=h+k-v$: 

\begin{thm11} 
Let $F$ be a free group and let integers $h,k,c,v$ satisfy $2\le h\le k$, $2\le v\le h+k$, $0\le c\le (h-1)(k-1)+1$. Define a sequence $(a_i)$ as follows:
\begin{align*}
a_0&=0\,;\\
a_i&=\Big\lfloor \frac {i^2}{4}\Big\rfloor +1,\quad \text{for\quad $i=1,\dots, 2(h-1)\,;$}\\
a_i&=(h-1)(i-h+1)+1,\quad \text{for\quad $i=2(h-1),\dots,h+k-2$.}
\end{align*}
If we denote $i=h+k-v$, then for any $c\le a_i$ there exist subgroups $H,K\le F$ such that\/ $\rk(H)=h$, $\rk(K)=k$, $\rk(H\cap K)=c$, and\/ $\rk(H\vee K)=v$.
\end{thm11}

\begin{example} 
The diagram in Figure~\ref{fig:57} shows the realizable values from Theorem~\ref{thm:real} for $h=\rk H=5$, $k=\rk K=7$. They correspond to the sequence 
\[
(a_i)=(0,1,2,3,5,7,10,13,17,21,25).
\]

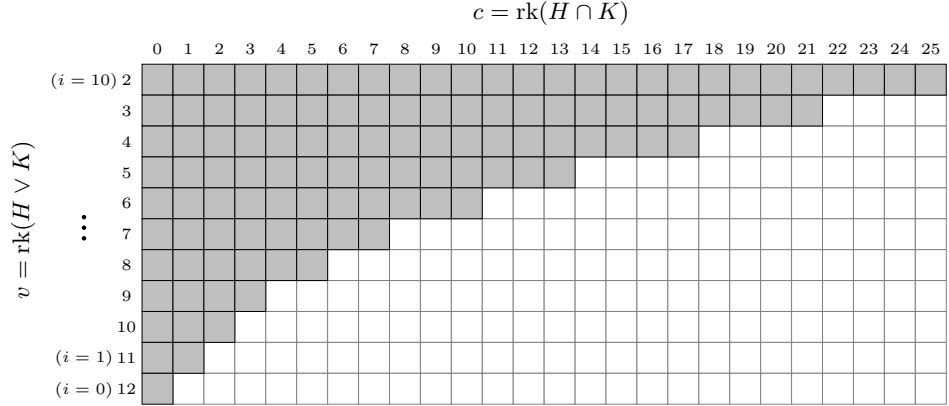
\begin{figure}[ht]
\begin{center}
\begin{tikzpicture}[scale=0.9]
{\small
\draw (6.5cm,0.75cm) node {$c=\rk (H\cap K)$};
\draw (-1.3cm,-2.3cm) node [rotate=90] {$v=\rk (H\vee K)$};
\draw (-0.4cm,-4.82cm) node {{\tiny $(i=0)$}};
\draw (-0.4cm,-4.35cm) node {{\tiny $(i=1)$}};
\draw (-0.4cm,-0.25cm) node {{\tiny $(i=10)$}};
\draw (-0.4cm,-2.30cm) node {{\Huge $\vdots$}};
}
{\tiny
\Ylinecolour{black!50}
\Yfillcolour{white}
\tgyoung(0cm,0cm,%
:::::::::::::::::::::::::::,%
:::::::::::::::::::::::::::,%
:::::::::::::::::::::::;;;;,%
:::::::::::::::::::;;;;;;;;,%
:::::::::::::::;;;;;;;;;;;;,%
::::::::::::;;;;;;;;;;;;;;;,%
:::::::::;;;;;;;;;;;;;;;;;;,%
:::::::;;;;;;;;;;;;;;;;;;;;,%
:::::;;;;;;;;;;;;;;;;;;;;;;,%
::::;;;;;;;;;;;;;;;;;;;;;;;,%
:::;;;;;;;;;;;;;;;;;;;;;;;;,%
::;;;;;;;;;;;;;;;;;;;;;;;;;%
)
\Ylinecolour{black}
\Yfillcolour{black!25}
\tgyoung(0cm,0cm,::0:1:2:3:4:5:6:7:8:9:<10>:<{1}1>:<12>:<13>:<14>:<15>:<16>:<17>:<18>:<19>:<20>:<21>:<{2}2>:<23>:<24>:<25>,%
:2;;;;;;;;;;;;;;;;;;;;;;;;;;,%
:3;;;;;;;;;;;;;;;;;;;;;;,%
:4;;;;;;;;;;;;;;;;;;,%
:5;;;;;;;;;;;;;;,%
:6;;;;;;;;;;;,%
:7;;;;;;;;,%
:8;;;;;;,%
:9;;;;,%
:<10>;;;,%
:<{1}1>;;,%
:<12>;%
)
}
\end{tikzpicture}
\caption{The realizable values for $(h,k)=(5,7)$ from Theorem~\ref{thm:real}.\label{fig:57}}
\end{center}
\end{figure}

\end{example}

We see that the sequence $(a_i)$ from the Theorem~\ref{thm:real} is a union of a discrete quadratic function and a linear function, with the linear part present only when $h<k$. The value $a_0=0$ reflects the fact that if $\rk (H\vee K)=\rk H + \rk K$ then $\rk (H\cap K)=0$. (This is a consequence of the property of finitely generated free groups being Hopfian.) 
On the other hand, the value $v=2$ corresponds to $i=h+k-2$, and $a_{h+k-2}$ equals $(h-1)(k-1)+1$. This reflects the fact that all possible values for $\rk (H\cap K)=0,\dots,(h-1)(k-1)+1$ are realizable when $\rk (H\vee K)=2$, as was shown by Kent in~\cite{Ke1}.

\begin{proof}[Proof of Theorem~\ref{thm:real}]
In what follows, we will call a finite sequence $(a_i)$, ($i=0,\dots,n$), \emph{greater} than a sequence $(b_i)$, ($i=0,\dots,n$), if for each $i$, we have $a_i\ge b_i$. In this case we also say that the sequence $(b_i)$ is \emph{smaller} than the sequence $(a_i)$.

We will obtain the required set of realizable values for page $(h',k')$ inductively from the realizable values for page $(h,k)$ with $h\le h'$, $k\le k'$, by using the following operations:

Ia. Adding a new generator to $H$. This operation copies all realizable values from page $(h,k)$ to page $(h+1,k)$ as follows:
\[
(h,k;v,c)\longmapsto (h+1,k;v+1,c)
\]

Ib. Adding a new generator to $K$. This operation copies all realizable values from page $(h,k)$ to page $(h,k+1)$ as follows:
\[
(h,k;v,c)\longmapsto (h,k+1;v+1,c)
\]

II. Adding the same new generator to both $H$ and $K$. We get:
\[
(h,k;v,c)\longmapsto (h+1,k+1;v+1,c+1)
\]

III. Populating the first row of any page $(h,k)$ with values 
\[
v=2,\quad c=0,\,\dots,\,(h-1)(k-1)+1
\] 
corresponding to the explicit examples produced by Kent~\cite{Ke1}. 

To prove the claimed effect on ranks under operations Ia, Ib and II, we notice that adding a new generator to a subgroup $H$ amounts to attaching the loop corresponding to this generator to the core graph $\G_H$, at its basepoint. (We can do that in view of the assumption in the opening paragraph of the current section.) This makes the effect of operations Ia and Ib obvious, while for operation II we recall the construction of the core graph for $H\cap K$ from Section~\ref{sec:graphs}. It is clear that if we attach a loop labeled with the same new generator to both core graphs $\G_H$, $\G_K$ at their respective basepoints, then the core graph of their intersection $\G_{H\cap K}$ also gets the loop labeled with the same generator attached to its basepoint.

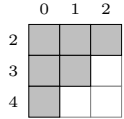
\begin{figure}
\begin{center}
\begin{tikzpicture}[scale=0.9]
{\tiny
\Ylinecolour{black!50}
\Yfillcolour{white}
\tgyoung(0cm,0cm,%
::::,%
::::,%
:::;,%
::;;%
)
\Ylinecolour{black}
\Yfillcolour{black!25}
\tgyoung(0cm,0cm,::0:1:2,%
:2;;;,%
:3;;,%
:4;%
)
}
\end{tikzpicture}
\caption{The realizable values for $(h,k)=(2,2)$.\label{fig:22}}
\end{center}
\end{figure}

\newcommand{\tincdots}{\scriptstyle\mathbf{\cdots}}
\newcommand{\tinvdots}{\scriptstyle\mathbf{\cdots}}
\newcommand{\tinddots}{\scriptstyle\mathbf{\cdots}}
\newcommand{\tindots}{\tincdots}
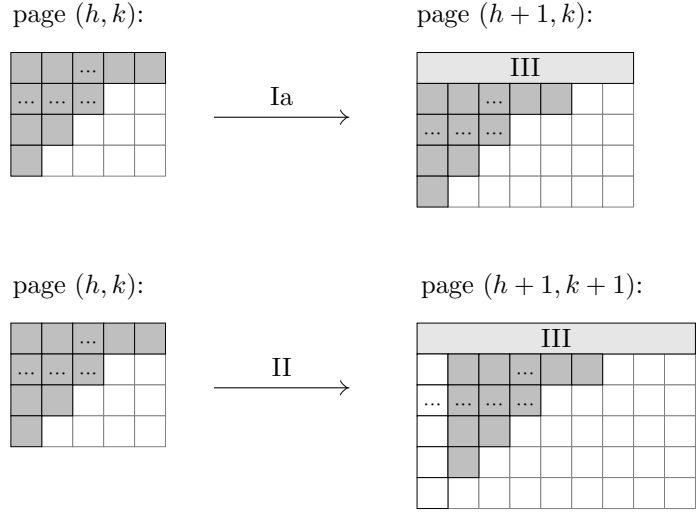
\begin{figure}
\begin{center}
\begin{tikzpicture}[scale=0.9]
\Ystdtext0
\Ylinecolour{black!50}
\Yfillcolour{white}
\tgyoung(0cm,0cm,%
:::::,%
:::;;,%
::;;;,%
:;;;;%
)
\Ylinecolour{black}
\Yfillcolour{black!25}
\tgyoung(0cm,0cm,%
;;;\tincdots;;,%
;\tinvdots;\tinvdots;\tinddots,%
;;,%
;%
)
\draw[mapping=0.99] (3,-0.5)--node[above=1pt] {Ia}(5,-0.5);
\Ylinecolour{black!50}
\Yfillcolour{white}
\tgyoung(6cm,0cm,%
:::::::,%
:::::;;,%
:::;;;;,%
::;;;;;,%
:;;;;;;%
)
\Ylinecolour{black}
\Yfillcolour{black!25}
\tgyoung(6cm,0cm,%
!<\Yfillcolour{black!10}>%
_7<\mathrm{III}>,%
!<\Yfillcolour{black!25}>%
;;;\tincdots;;,%
;\tinvdots;\tinvdots;\tinddots,%
;;,%
;%
)
\draw (1,1) node {page $(h,k)$:};
\draw (7.3,1) node {page $(h+1,k)$:};

\begin{scope}[yshift=-4cm]
\Ystdtext0
\Ylinecolour{black!50}
\Yfillcolour{white}
\tgyoung(0cm,0cm,%
:::::,%
:::;;,%
::;;;,%
:;;;;%
)
\Ylinecolour{black}
\Yfillcolour{black!25}
\tgyoung(0cm,0cm,%
;;;\tincdots;;,%
;\tinvdots;\tinvdots;\tinddots,%
;;,%
;%
)
\draw[mapping=0.99] (3,-0.5)--node[above=1pt] {II}(5,-0.5);
\Ylinecolour{black!50}
\Yfillcolour{white}
\tgyoung(6cm,0cm,%
:::::::::,%
::::::;;;,%
::::;;;;;,%
:::;;;;;;,%
::;;;;;;;,%
:;;;;;;;;%
)
\Ylinecolour{black}
\Yfillcolour{black!25}
\tgyoung(6cm,0cm,%
!<\Yfillcolour{black!10}>%
_9<\mathrm{III}>,%
!<\Yfillcolour{white}>;%
!<\Yfillcolour{black!25}>%
;;;\tincdots;;,%
!<\Yfillcolour{white}>;\tinvdots%
!<\Yfillcolour{black!25}>%
;\tinvdots;\tinvdots;\tinddots,%
!<\Yfillcolour{white}>;%
!<\Yfillcolour{black!25}>%
;;,%
!<\Yfillcolour{white}>;%
!<\Yfillcolour{black!25}>%
;,%
!<\Yfillcolour{white}>;%
)
\draw (1,1) node {page $(h,k)$:};
\draw (7.7,1) node {page $(h+1,k+1)$:};
\end{scope}
\end{tikzpicture}
\caption{The effect of operations Ia and II followed by III. The horizontal axes of the tables correspond to the values of $c=\rk (H\cap K)$, the vertical axes to the values of $v=\rk (H\vee K)$. Operations Ia and II are shown in dark gray, operation III in light gray. Operation Ib has the same effect as Ia, but for the target page $(h,k+1)$.\label{fig:23}}
\end{center}
\end{figure}

We start with page $(h,k)=(2,2)$, which corresponds to the sequence $(a_n)=(0,1,2)$, (as was shown in~\cite[p.~307]{Ke2}, see Figure~\ref{fig:22}), and determine a sequence of operations Ia, Ib, II, III that leads to the greatest sequence $(a_n)$, for the required values $(h,k)$. 

We first show that if $h<k$ then applying (Ia + III) followed by (Ib + III) produces a greater sequence $(a_n)$ of realizable $c$--values for page $(h+1,k+1)$ than applying first (Ib + III) and then (Ia + III). Indeed, let $(a_n)$ be the sequence for $(h,k)$ with $n$ ranging from $0$ to $h+k-2$. Denote $(a_n')$ the result of applying (Ia + III) to $(a_n)$ and $(a_n'')$ the result of applying (Ib + III) to $(a_n')$. Similarly, denote $(b_n')$ the result of applying (Ib + III) to $(a_n)$ and $(b_n'')$ the result of applying (Ia + III) to $(b_n')$:
\begin{gather*}
(a_n)\xrightarrow{\text{Ia+III}}(a_n')\xrightarrow{\text{Ib+III}}(a_n''),\\
(a_n)\xrightarrow{\text{Ib+III}}(b_n')\xrightarrow{\text{Ia+III}}(b_n'').
\end{gather*}
We want to show that $a_n''\ge b_n''$ for all $n$. Since operation Ia copies all values of $(a_i)$ to the new page, we have: $a_i'=a_i$ if $i<(h+1)+k-2$, and since operation III populates the first row (corresponding to the last value of $a_i$) with a sequence of $h(k-1)+1$ values, we get $a'_{(h+1)+k-2}=h(k-1)+1$. Similarly, $a_i''=a_i$ for $i<(h+1)+(k+1)-3$, $a''_{(h+1)+(k+1)-3}=h(k-1)+1$, and $a''_{(h+1)+(k+1)-2}=hk+1$. Performing these operations in the opposite order, i.e.\ applying (Ib + III) to $(a_n)$ first followed by (Ia + III), we get in a similar fashion: $b''_i=a_i$ for $i<(h+1)+(k+1)-3$, $b''_{(h+1)+(k+1)-3}=hk-k+1$, and $b''_{(h+1)+(k+1)-2}=hk+1$. The sequences $(a_n'')$ and $(b_n'')$ agree for all values of $n$ except the penultimate one, $n=(h+1)+(k+1)-3$. Comparing them and taking into account that $h<k$, we see that $hk-h+1>hk-k+1$, i.e.\ $a_n''\ge b_n''$ for all $n$. This means that if $h<k$, applying (Ia + III) followed by (Ib + III) produces a greater sequence $(a_n)$ than if applying these operations in the opposite order. 

The above formulas also show that in the case when $h=k$ the result of applying operations (Ia + III) and (Ib + III) does not depend on their order.

Now let's examine operation (II + III). It copies realizable values from page $(h,k)$ to page $(h+1,k+1)$ as shown in the lower part of Figure~\ref{fig:23}. Let $(a_n)$ and $(a_n''')$ be the corresponding sequences of realizable values for pages $(h,k)$ and $(h+1,k+1)$, respectively, and let $(a''_n)$ be the sequence obtained from $(a_n)$ by the composition of operations (Ia + III) and (Ib + III), as before. We claim that $(a_n''')$ is smaller than $(a_n'')$. Indeed, we saw in the previous paragraph that $a_i''=a_i$ for $i<h+k-1$, $a''_{h+k-1}=hk-h+1$ and $a''_{h+k}=hk+1$. From Figure~\ref{fig:23} we see that $a'''_{i}=a_{i-1}+1$ for $i=1,\dots,h+k-1$ and $a'''_{h+k}=hk+1$. Thus to prove that $a''_i\ge a'''_i$ for all $i$, we 
claim the following:
\begin{enumerate}
\item [(i)] $a_{i}\ge a_{i-1}+1$ for $i=1,\dots,h+k-2$, and
\item [(ii)] $hk-h+1\ge a_{h+k-2}+1$.
\end{enumerate}
We will prove these using the following fact: \emph{for any page $(h,k)$ the last value of the sequence $(a_n)$, i.e.\ the term $a_{h+k-2}$, equals $(h-1)(k-1)+1$}, which is the content of operation III. (Recall that operation III is applied every time we apply Ia, Ib or II.) Thus, inequality (ii) is established: $a_{h+k-2}=(h-1)(k-1)+1\le hk-h$ since $k\ge2$. To prove (i) we observe that all operations Ia, Ib, II preserve the difference between consecutive elements $a_{j}-a_{j-1}$, so all that needs to be proved is that the top value $a_{h+k-2}=(h-1)(k-1)+1$ is always at least $1$ bigger than the previous value of $a_{h+k-3}$. Assuming that (i) holds true for page $(h,k)$, we examine how it changes under the application of operations (Ia + III), (Ib + III) and (II + III). In the first case, operation Ia makes the next-to-last value of the sequence $(a_n')$ to be $(h-1)(k-1)+1$, while the last one is $h(k-1)+1$, with the difference $k-1$ between the two. In the second case, arguing in a similar fashion, we get that the difference between the last two values of $(b_n')$ equals $h-1$. And in the case of operation (II + III), we get that the difference equals $(hk+1) - \big((h-1)(k-1)+2\big)=h+k-2$. We see that in all three cases this difference is at least $1$, which proves claim (i). Thus operation (II + III) creates a sequence $(a_n''')$ which is smaller than the sequence $(a_n'')$ created by (Ia + III) followed by (Ib + III).

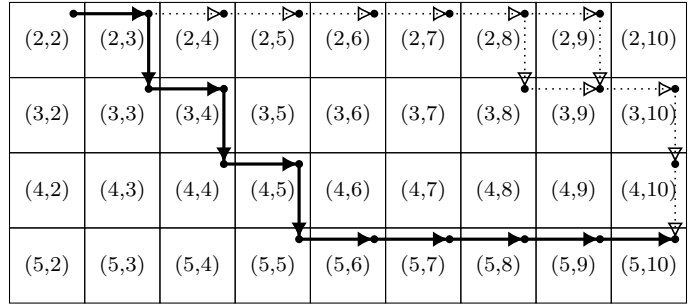
\begin{figure}
\begin{tikzpicture}
\foreach \x in {2,...,10}
\foreach \y in {2,...,5}
{
\draw (\x,\y) +(-.5,-.5) rectangle ++(.5,.5);
\draw (\x,7-\y) node{\footnotesize(\y,\x)};
}

\fill (2,5)+(0.35,0.35) circle (1.5pt);
\fill (3,5)+(0.35,0.35) circle (1.5pt);
\fill (3,4)+(0.35,0.35) circle (1.5pt);
\fill (4,4)+(0.35,0.35) circle (1.5pt);
\fill (4,3)+(0.35,0.35) circle (1.5pt);
\fill (5,3)+(0.35,0.35) circle (1.5pt);
\fill (5,2)+(0.35,0.35) circle (1.5pt);
\fill (6,2)+(0.35,0.35) circle (1.5pt);
\fill (7,2)+(0.35,0.35) circle (1.5pt);
\fill (8,2)+(0.35,0.35) circle (1.5pt);
\fill (9,2)+(0.35,0.35) circle (1.5pt);
\fill (10,2)+(0.35,0.35) circle (1.5pt);
\begin{scope}[very thick]
\draw [->-=0.99] (2.35,5.35)--(3.35,5.35);
\draw [->-=0.99] (3.35,5.35)--(3.35,4.35);
\draw [->-=0.99] (3.35,4.35)--(4.35,4.35);
\draw [->-=0.99] (4.35,4.35)--(4.35,3.35);
\draw [->-=0.99] (4.35,3.35)--(5.35,3.35);
\draw [->-=0.99] (5.35,3.35)--(5.35,2.35);
\draw [->-=0.99] (5.35,2.35)--(6.35,2.35);
\draw [->-=0.99] (6.35,2.35)--(7.35,2.35);
\draw [->-=0.99] (7.35,2.35)--(8.35,2.35);
\draw [->-=0.99] (8.35,2.35)--(9.35,2.35);
\draw [->-=0.99] (9.35,2.35)--(10.35,2.35);
\end{scope}

\begin{scope}[dotted, semithick]
\draw [->-o=0.99] (3.35,5.35)--(4.35,5.35);
\draw [->-o=0.99] (4.35,5.35)--(5.35,5.35);
\draw [->-o=0.99] (5.35,5.35)--(6.35,5.35);
\draw [->-o=0.99] (6.35,5.35)--(7.35,5.35);
\draw [->-o=0.99] (7.35,5.35)--(8.35,5.35);
\draw [->-o=0.99] (8.35,5.35)--(8.35,4.35);
\draw [->-o=0.99] (8.35,5.35)--(9.35,5.35);
\draw [->-o=0.99] (9.35,5.35)--(9.35,4.35);
\draw [->-o=0.99] (8.35,4.35)--(9.35,4.35);
\draw [->-o=0.99] (9.35,4.35)--(10.35,4.35);
\draw [->-o=0.99] (10.35,4.35)--(10.35,3.35);
\draw [->-o=0.99] (10.35,3.35)--(10.35,2.35);
\end{scope}

\fill[black] (4,5)+(0.35,0.35) circle (1.5pt);
\fill[black] (5,5)+(0.35,0.35) circle (1.5pt);
\fill[black] (6,5)+(0.35,0.35) circle (1.5pt);
\fill[black] (7,5)+(0.35,0.35) circle (1.5pt);
\fill[black] (8,5)+(0.35,0.35) circle (1.5pt);
\fill[black] (8,4)+(0.35,0.35) circle (1.5pt);
\fill[black] (9,5)+(0.35,0.35) circle (1.5pt);
\fill[black] (9,4)+(0.35,0.35) circle (1.5pt);
\fill[black] (10,4)+(0.35,0.35) circle (1.5pt);
\fill[black] (10,3)+(0.35,0.35) circle (1.5pt);
\end{tikzpicture}

\caption{The ways to reach page $(h,k)=(5,10)$ from page $(h_0,k_0)=(2,2)$ are given by broken lines composed of horizontal and vertical arrows, which correspond to operations (Ib+III) and (Ia+III), respectively. The thick line corresponds to an optimal sequence of operations, giving the maximal sequence $(a_n)$.
\label{fig:pages}}
\end{figure}

The above analysis shows that to obtain the greatest sequence $(a_n)$ of realizable values for page $(h,k)$, $h\le k$, one can discard operation (II + III) completely and apply only operations (Ia + III) and (Ib + III), starting with page $(h_0,k_0)=(2,2)$. All the ways to get from page $(2,2)$ to page $(h,k)$ by applying the said operations can be encoded by broken lines running in a rectangular table from entry $(2,2)$ to entry $(h,k)$ with horizontal and vertical segments corresponding to operations (Ib + III) and (Ia + III), respectively, see Figure~\ref{fig:pages}. Every time when operation (Ib + III) followed by (Ia + III) is applied to a page $(h',k')$ with $h'<k'$, we can interchange the order of these operations thus producing a bigger sequence $(a_n)$. By repeatedly doing this interchange, we obtain an optimal sequence which can be described as follows: 
\begin{quote}
\emph{Start with $(h_0,k_0)=(2,2)$. Alternate operations (Ib + III) and (Ia + III) to reach page $(h,h)$. If $h<k$, keep applying (Ib + III) to reach page $(h,k)$}:
\end{quote}
\begin{multline*}
(2,2)\xrightarrow{\text{Ib+III}}(2,3)\xrightarrow{\text{Ia+III}}(3,3)\xrightarrow{\text{Ib+III}}\dots
\xrightarrow{\text{Ia+III}}(h,h)\xrightarrow{\text{Ib+III}}\\
\xrightarrow{\text{Ib+III}}(h,h+1)\xrightarrow{\text{Ib+III}}(h,h+2)\xrightarrow{\text{Ib+III}}\dots
\xrightarrow{\text{Ib+III}}(h,k).
\end{multline*}

Note that $a_0=0,a_1=1,a_2=2$ for $(h_0,k_0)=(2,2)$, and each of operations (Ia + III), (Ib + III) augments the existing sequence $a_0,\dots,a_n$ with a new value $a_{n+1}$, which is computed according to operation III as follows: 
\begin{align*}
(\ell,\ell)&\xrightarrow{\text{Ib+III}}(\ell,\ell+1): &a_{n+1}=\ell(\ell-1)+1,\quad &\text{with }n=2(\ell-1);\\
(\ell,\ell+1)&\xrightarrow{\text{Ia+III}}(\ell+1,\ell+1): &a_{n+1}=\ell^2+1,\quad &\text{with }n=2\ell-1,
\end{align*}
for $\ell=2,\dots,h-1$, and 
\[
(h,h+j)\xrightarrow{\text{Ib+III}}(h,h+j+1):\quad a_{n+1}=(h-1)(h+j)+1,\,\text{with }n=2(h-1)+j,
\]
for $j=0,\dots,k-h-1$, if $h<k$.

Now we observe that the values $a_{n+1}$ for $0\le n\le 2h-3$ can be written concisely as $a_{n+1}=\big\lfloor\big(\frac{n+1}2\big)^2\big\rfloor+1$. Indeed, if $n=2(\ell-1)$, then $\big\lfloor\big(\frac{n+1}2\big)^2\big\rfloor=\big\lfloor(\ell-\frac12)^2\big\rfloor=\big\lfloor\ell^2-\ell+\frac14\big\rfloor=\ell(\ell-1)$, and if $n=2\ell-1$, then $\big\lfloor\big(\frac{n+1}2\big)^2\big\rfloor=\ell^2$. This proves that the above sequence of operations produces the sequence $a_i$ described in the Theorem, which finishes the proof.
\end{proof}

Now we have counterexamples to GTC for all $m\ge 5$.
\begin{cor}
Guzman's ``Group-Theoretic Conjecture'' does not hold for any $m\ge 5$.
\end{cor}
\begin{proof}
Theorem~\ref{thm:real} guarantees the existence of subgroups $H,K\le F$ such that $h=\rk (H)=m$, $k=\rk (K)=m$, $c=\rk(H\cap K)=m$ and $v=\rk(H\vee K)=m+1$, if $m\ge 5$. Indeed, $i=h+k-v=m-1$ in this case, which is less than $2(h-1)=2(m-1)$. So the value of $a_i$ in Theorem~\ref{thm:real} is equal to $\big\lfloor \frac{i^2}4\big\rfloor+1 = \big\lfloor\frac{(m-1)^2}4\big\rfloor+1$, which is bigger than or equal to $c=m$ for all $m\ge 5$. See Figure~\ref{fig:intro} for an illustration of the case $m=6$, which shows the existence of rank $6$ subgroups $H,K$ with $\rk(H\vee K)=7$ and $\rk(H\cap K)=6,7$.
\end{proof}

\section{The structure of the topological pushout}\label{sec:dicks}

In this section we study the combinatorial structure of the topological pushout using graphs introduced by Dicks in~\cite{Di}. This will allow us to obtain an upper bound on the rank of the topological pushout of two core graphs and hence on the rank of the join of the corresponding subgroups.

\subsection{The Dicks graphs}
Let $\G_H$, $\G_K$ be the core graphs for subgroups $H,K\le F$ and the core graph $\G_{H\cap K}$ is constructed as the pullback of graph immersions, as in Section~\ref{sec:graphs}: 
\[
\begin{CD}
\G_{H\cap K} @>\Pi_K>> \G_K\\
@VV\Pi_H V               @VVp_K V\\
\G_H         @>p_H>>       X
\end{CD}
\]
Each element $z$ (a vertex or an edge) of $\G_H$ and $\G_K$ inherits its \emph{type} from the mapping to $X$, that is, an element of $V(X)\sqcup E(X)=\{u,v,a,b,c\}$ to which $z$ maps. 

We are now going to define five bipartite undirected graphs $\Omega_u, \Omega_v, \Omega_a, \Omega_b, \Omega_c$, adapting the construction of~\cite{Di} to our needs.

First, we define $\Omega_u$ as follows.
\begin{align*}
V(\Omega_u)&=\{z\in V(\G_H) \mid p_H(z)=u\} \sqcup \{z'\in V(\G_K) \mid p_K(z')=u\},\\
E(\Omega_u)&=\{(z,z')\in V(\G_{H\cap K})\mid p_H(z)=u \text{ and } p_K(z')=u\}.
\end{align*}
In other words, two vertices $z\in V(\G_H)$, $z'\in V(\G_K)$ from $V(\Omega_u)$ are connected with a single undirected edge if the vertex $(z,z')$ of $G_{H\cap K}$ actually belongs to $\G_{H\cap K}$.

The graph $\Omega_v$ is defined analogously to $\Omega_u$ with the obvious modification ($u\rightsquigarrow v$). Denote also $\Omega=\Omega_u\sqcup\Omega_v$.

We define $\Omega_a$ similarly, by dealing with edges instead of vertices:
\begin{align*}
V(\Omega_a)&=\{e\in E(\G_H) \mid p_H(e)=a\} \sqcup \{e'\in E(\G_K) \mid p_K(e')=a\},\\
E(\Omega_a)&=\{(e,e')\in E(\G_{H\cap K})\mid p_H(e)=a \text{ and } p_K(e')=a\}
\end{align*}

The graphs $\Omega_b$, $\Omega_c$ are defined analogously with the obvious modifications. 

The bipartite structure on the defined graphs is given by grouping all vertices/edges of graph $\G_H$ in one part, and those of $\G_K$ in the other.

Figure~\ref{fig:dicks-uvabc} shows the graphs $\Omega_u, \Omega_v, \Omega_a, \Omega_b, \Omega_c$ for the core graphs of the subgroups $H,K$ from Example~\ref{ex:main}. 

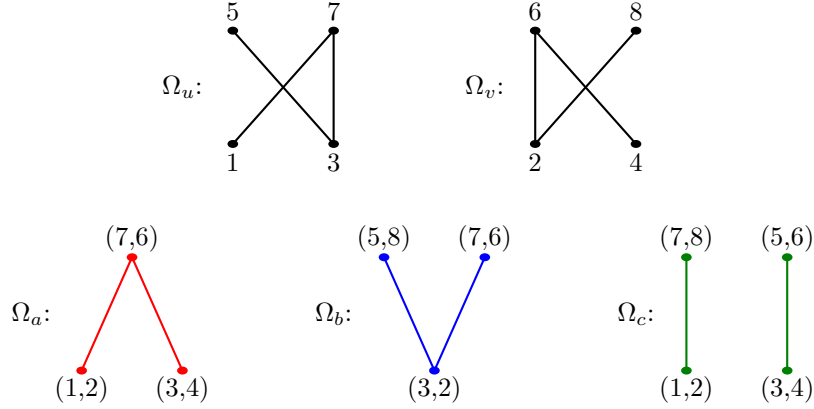
\begin{figure}[ht]
\begin{center}
\begin{tikzpicture}[yscale=0.75]
\begin{scope}[thick,scale=0.67]
\fill (0,0) circle (3pt);
\fill (2,0) circle (3pt);
\fill (0,3) circle (3pt);
\fill (2,3) circle (3pt);
\draw (0,0)--(2,3);
\draw (2,0)--(0,3);
\draw (2,0)--(2,3);
\draw (0,-0.5) node {$1$};
\draw (2,-0.5) node {$3$};
\draw (0,3.5) node {$5$};
\draw (2,3.5) node {$7$};
\draw (-1,1.5) node {$\Omega_u$:};

\begin{scope}[xshift=6cm]
\fill (0,0) circle (3pt);
\fill (2,0) circle (3pt);
\fill (0,3) circle (3pt);
\fill (2,3) circle (3pt);
\draw (0,0)--(2,3);
\draw (2,0)--(0,3);
\draw (0,0)--(0,3);
\draw (0,-0.5) node {$2$};
\draw (2,-0.5) node {$4$};
\draw (0,3.5) node {$6$};
\draw (2,3.5) node {$8$};
\draw (-1,1.5) node {$\Omega_v$:};
\end{scope}

\begin{scope}[xshift=-3cm,yshift=-6cm]
\begin{scope}[red]
\fill (0,0) circle (3pt);
\fill (2,0) circle (3pt);
\fill (1,3) circle (3pt);
\draw (0,0)--(1,3);
\draw (2,0)--(1,3);
\end{scope}
\draw (0,-0.5) node {(1,2)}; 
\draw (2,-0.5) node {(3,4)}; 
\draw (1,3.5) node {(7,6)}; 
\draw (-1,1.5) node {$\Omega_a$:};
\end{scope}

\begin{scope}[xshift=3cm,yshift=-6cm]
\begin{scope}[blue]
\fill (1,0) circle (3pt);
\fill (0,3) circle (3pt);
\fill (2,3) circle (3pt);
\draw (1,0)--(0,3);
\draw (2,3)--(1,0);
\end{scope}
\draw (1,-0.5) node {(3,2)}; 
\draw (0,3.5) node {(5,8)}; 
\draw (2,3.5) node {(7,6)}; 
\draw (-1,1.5) node {$\Omega_b$:};
\end{scope}

\begin{scope}[xshift=9cm,yshift=-6cm]
\begin{scope}[mygreen]
\fill (0,0) circle (3pt);
\fill (2,0) circle (3pt);
\fill (0,3) circle (3pt);
\fill (2,3) circle (3pt);
\draw (0,0)--(0,3);
\draw (2,0)--(2,3);
\end{scope}
\draw (0,-0.5) node {(1,2)}; 
\draw (0,3.5) node {(7,8)}; 
\draw (2,-0.5) node {(3,4)}; 
\draw (2,3.5) node {(5,6)}; 
\draw (-1,1.5) node {$\Omega_c$:};
\end{scope}
\end{scope}
\end{tikzpicture}
\end{center}
\caption{The Dicks graphs $\Omega_u, \Omega_v, \Omega_a, \Omega_b, \Omega_c$ for the subgroups in Example~\ref{ex:main}. A pair $(x,y)$ denotes the unique $a$--, $b$--, or $c$--edge of $E(\G_H)\sqcup E(\G_K)$ originating at a vertex $x$ and terminating at a vertex $y$.
\label{fig:dicks-uvabc}}
\end{figure}

Notice that the operations of taking the origin and the terminus of an edge induce embeddings $\tilde o$, $\tilde t$ of the graph $\Omega_a\sqcup \Omega_b\sqcup\Omega_c$ into $\Omega_u, \Omega_v$, respectively. Let's show that $\tilde o|_{\Omega_a}\colon\Omega_a\to \Omega_u$ is an embedding. If $e,e'\in V(\Omega_a)$, with $e\ne e'$, then they correspond to $a$--edges of $\G_H\sqcup\G_K$. If $e\in E(\G_H)$, $e'\in E(\G_K)$, their origins are different. If they both belong to the same graph then their origins are also different, since $\G_H$ and $\G_K$ are folded, i.e.\ at every vertex of $\G_H$, $\G_K$ there is at most one $a$--edge having this vertex as the origin. This proves that the map $\tilde o|_{\Omega_a}\colon V(\Omega_a)\to V(\Omega_u)$ is injective. Now, if $e,e'\in V(\Omega_a)$ are connected with an edge, this means that there is an $a$--edge $(e,e')$ in $E(\G_{H\cap K})$ whose projections under $\Pi_H, \Pi_K$ are $e,e'$. In particular, the origin of $(e,e')$ projects to the origins of $e,e'$, and we see that the images of $e,e'$ in $V(\Omega_u)$ are connected with an edge as well. This proves that $\tilde o|_{\Omega_a}\colon\Omega_a\to\Omega_u$ is an injective graph homomorphism, i.e.\ is an isomorphism onto its image. Denote
\begin{align}\label{eq:dicks2}
\Omega_{u,a}&=\im(\Omega_a\xhookrightarrow{\quad\tilde o\quad}\Omega_u), & \Omega_{v,a}&=\im(\Omega_a\xhookrightarrow{\quad\tilde t\quad}\Omega_v), \nonumber\\
\Omega_{u,b}&=\im(\Omega_b\xhookrightarrow{\quad\tilde o\quad}\Omega_u), & \Omega_{v,b}&=\im(\Omega_b\xhookrightarrow{\quad\tilde t\quad}\Omega_v), \\
\Omega_{u,c}&=\im(\Omega_c\xhookrightarrow{\quad\tilde o\quad}\Omega_u), & \Omega_{v,c}&=\im(\Omega_c\xhookrightarrow{\quad\tilde t\quad}\Omega_v) \nonumber
\end{align}
the images of these embeddings, and set
\begin{align*}
\Omega_{u,ab}&=\Omega_{u,a}\cap\Omega_{u,b}, & \Omega_{v,ab}&=\Omega_{v,a}\cap\Omega_{v,b}, & \Omega_{ab}&=\Omega_{u,ab}\sqcup\Omega_{v,ab},\\
\Omega_{u,bc}&=\Omega_{u,b}\cap\Omega_{u,c}, & \Omega_{v,bc}&=\Omega_{v,b}\cap\Omega_{v,c}, & \Omega_{bc}&=\Omega_{u,bc}\sqcup\Omega_{v,bc},\\
\Omega_{u,ac}&=\Omega_{u,a}\cap\Omega_{u,c}, & \Omega_{v,ac}&=\Omega_{v,a}\cap\Omega_{v,c}, & \Omega_{ac}&=\Omega_{u,ac}\sqcup\Omega_{v,ac},\\
\Omega_{u,abc}&=\Omega_{u,a}\cap\Omega_{u,b}\cap\Omega_{u,c}, & \Omega_{v,abc}&=\Omega_{v,a}\cap\Omega_{v,b}\cap\Omega_{v,c}, & \Omega_{abc}&=\Omega_{u,abc}\sqcup\Omega_{v,abc}.
\end{align*}

Since by Remark~\ref{rem:novalence1} we assume that neither of graphs $\G_H$, $\G_K$, $\G_{H\cap K}$ can have a vertex of valence $1$, we observe that each vertex in $\Omega_{u,a}$ is also the origin of either another $b$--edge, or $c$--edge, or both. Thus,
\[
\Omega_{u,a}=\Omega_{u,ab}\bigvee_{\Omega_{u,abc}}\Omega_{u,ac}, 
\]
where $K=L\bigvee_NM$ means that $K=L\cup M$ and $L\cap M=N$. Similarly, each vertex in $\Omega_{v,a}$ is also the terminus of either another $b$--edge, or $c$--edge, or both. Thus,
\[
\Omega_{v,a}=\Omega_{v,ab}\bigvee_{\Omega_{v,abc}}\Omega_{v,ac}, 
\]
Of course, a completely similar statement holds for all other graphs~\eqref{eq:dicks2} in place of $\Omega_{u,a}$, $\Omega_{v,a}$. 

Clearly, $\Omega_{ab}\cap\Omega_{bc}=\Omega_{bc}\cap\Omega_{ab}=\Omega_{ac}\cap\Omega_{ab}=\Omega_{abc}$. Thus we can denote
\[
A=\Omega_{ab}\bigvee_{\Omega_{abc}}\Omega_{ac},\qquad B=\Omega_{ab}\bigvee_{\Omega_{abc}}\Omega_{bc},\qquad
C=\Omega_{ac}\bigvee_{\Omega_{abc}}\Omega_{bc}. 
\]

Notice that, by construction, all graphs $A$, $B$, $C$ are subgraphs of $\Omega=\Omega_u\sqcup\Omega_v$.

In what follows we will depict connected components of $\Omega$ against the following `trefoil' Venn diagram, illustrating the relation
\[
\Omega_{ab}\cap\Omega_{bc}=\Omega_{ab}\cap\Omega_{ac}=\Omega_{bc}\cap\Omega_{ac}=\Omega_{abc},
\]
see~Figure~\ref{fig:venn}.

\begin{figure}[ht]
\begin{center}
\begin{tikzpicture}[scale=0.75]
\draw [rounded corners=5pt] (0,3.464) arc (60:-60:2) arc (240:60:2) arc (360:180:2) arc (120:60:2);
\draw (0,2.35) node {$\Omega_{abc}$};
\draw (0,0.85) node {$\Omega_{bc}$};
\draw (-1.25,3) node {$\Omega_{ab}$};
\draw (1.25,3) node {$\Omega_{ac}$};
\begin{scope}[yshift=0.5cm]
\draw [snake=brace,segment amplitude=2mm] (-2,4) -- (2,4); \draw (0,4.65) node {$A$};
\end{scope}
\begin{scope}[xshift=-0.25cm,yshift=-0.25cm]
\draw [snake=brace,segment amplitude=2mm] (-0.6,-0.25) -- (-2.5,3); \draw (-2.15,1) node {$B$};
\end{scope}
\begin{scope}[xshift=0.25cm,yshift=-0.25cm]
\draw [snake=brace,segment amplitude=2mm] (2.5,3) -- (0.6,-0.25); \draw (2.15,1) node {$C$};
\end{scope}
\end{tikzpicture}
\end{center}
\caption{Venn diagram for the Dicks graphs. 
\label{fig:venn}}
\end{figure}
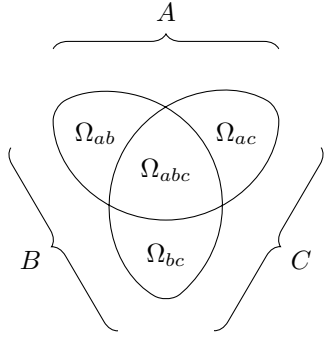

Now, for $A$, we have:
\begin{equation}\label{eq:A}
\begin{multlined}
A=\Omega_{ab}\cup\Omega_{ac}=(\Omega_{u,ab}\sqcup\Omega_{v,ab})\cup(\Omega_{u,ac}\sqcup\Omega_{v,ac})=\\[1ex]
(\Omega_{u,ab}\cup\Omega_{u,ac})\sqcup(\Omega_{v,ab}\cup\Omega_{v,ac})=\Omega_{u,a}\sqcup\Omega_{v,a}\cong \Omega_a\sqcup\Omega_a,
\end{multlined}
\end{equation}
and similarly for $B$, $C$.

Thus we established the following duality, discovered by Dicks~\cite{Di}:
\begin{prop}[Dicks' duality]\label{prop:dicksdual}
Each of the graphs $A,B,C$ defined above consists of an even number of connected components, which are isomorphic in pairs. 
If\/ $\{Z,Z'\}$ is such a pair of components of $A$, then one of $Z,Z'$ is a component of\/ $\Omega_{u,a}$ and the other is a component of\/ $\Omega_{v,a}$, and the isomorphism between them preserves the bipartite structure. Similar statements are true for $B$ and $C$ in place of $A$. \qed
\end{prop}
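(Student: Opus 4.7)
The plan is to exploit the decomposition already recorded in equation~\eqref{eq:A}, namely $A=\Omega_{u,a}\sqcup\Omega_{v,a}$ with both summands isomorphic to $\Omega_a$ via the origin and terminus embeddings. The content of the proposition is essentially a reinterpretation of this identity at the level of connected components; the only additional point will be to verify that the resulting isomorphism respects the bipartite structure.

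First I would record that $\tilde o|_{\Omega_a}\colon\Omega_a\to\Omega_{u,a}$ is a graph isomorphism (established in the text preceding~\eqref{eq:dicks2}) and that, by the same argument applied at the termini of $a$-edges instead of their origins, $\tilde t|_{\Omega_a}\colon\Omega_a\to\Omega_{v,a}$ is one as well. Composing, the map
\[
\Phi_a \;=\; \tilde t|_{\Omega_a}\circ(\tilde o|_{\Omega_a})^{-1}\colon\Omega_{u,a}\longrightarrow\Omega_{v,a}
\]
is a graph isomorphism, so it sends connected components bijectively to connected components. Since $A$ is the disjoint union of $\Omega_{u,a}$ and $\Omega_{v,a}$, its components are exactly those of the two pieces taken together, and $\Phi_a$ pairs them into isomorphic pairs $\{Z,Z'\}$ with $Z$ a component of $\Omega_{u,a}$ and $Z'$ a component of $\Omega_{v,a}$. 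In particular the total number of components of $A$ is even.

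Second, I would check that $\Phi_a$ respects the bipartite structure. The partition of $V(\Omega_a)$ separates $a$-edges of $\G_H$ from $a$-edges of $\G_K$; if $e\in E(\G_H)$ is an $a$-edge, then $o(e),t(e)\in V(\G_H)$, so both $\tilde o(e)$ and $\tilde t(e)$ lie on the $\G_H$-side of $\Omega_u$ and $\Omega_v$ respectively, and the analogous statement holds for $\G_K$. Hence $\Phi_a$ carries the $\G_H$-part of $\Omega_{u,a}$ to the $\G_H$-part of $\Omega_{v,a}$ and the $\G_K$-part to the $\G_K$-part, as required. The assertions for $B$ and $C$ are obtained verbatim by replacing the letter $a$ with $b$ and $c$ throughout, using the corresponding embeddings.

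There is no substantive obstacle: most of the work was already done in establishing~\eqref{eq:A}, and what remains is to unpack the phrase ``isomorphism of graphs'' into statements about components and bipartition. The only subtlety worth flagging is that one must verify $\Phi_a$ is a genuine graph isomorphism (preserving incidence), not merely a vertex bijection; this is immediate from the fact that both $\tilde o$ and $\tilde t$ are morphisms in the category of graphs identifying $\Omega_a$ with its images.
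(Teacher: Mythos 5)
Your argument is correct and is essentially the paper's own: the proposition is stated with an immediate \qed precisely because it is a restatement, at the level of connected components, of the identity $A=\Omega_{u,a}\sqcup\Omega_{v,a}\cong\Omega_a\sqcup\Omega_a$ from~\eqref{eq:A} together with the fact that $\tilde o$ and $\tilde t$ are isomorphisms onto their images. Your explicit construction of $\Phi_a=\tilde t|_{\Omega_a}\circ(\tilde o|_{\Omega_a})^{-1}$ and the check that it preserves the $\G_H$/$\G_K$ bipartition just make that reasoning explicit.
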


\subsection{Modeling the topological pushout on the Dicks graphs}\label{ssec:modeling} 
Notice that the Dicks graphs $\Omega_u$, $\Omega_v$, $\Omega_a$, $\Omega_b$, $\Omega_c$ express the equivalence relation $\sim$ used to define the topological pushout in subsection~\ref{ssec:tp}: two vertices $z,z'$ are connected by an edge in the graph $\Omega_x$ (where $x$ stands for any of $u,v,a,b,c$) if and only if they are the two projections under $\Pi_H$, $\Pi_K$ of the same element of $\G_{H\cap K}$. Thus, the connected components of $\Omega_u\sqcup\Omega_v$ are exactly the vertices of the topological pushout, and the connected components of $\Omega_a\sqcup\Omega_b\sqcup\Omega_c$ are the edges of the topological pushout, with inclusions $\tilde o$, $\tilde t$ defined above being the origin and the terminus maps. 
In particular, we see from Figure~\ref{fig:dicks-uvabc} that the topological pushout $\T$ from Example~\ref{ex:main} (see Figure~\ref{fig:mainpic}) has exactly two vertices corresponding to the connected graphs $\Omega_u$ and $\Omega_v$ and four directed edges: one $a$--edge for the connected graph $\Omega_a$, one $b$--edge for the connected graph $\Omega_b$ and two $c$--edges for the two connected components of $\Omega_c$.

It will be useful for us to recast the topological pushout in terms of graphs $A,B,C$ defined above:  
\begin{prop}\label{prop:toppush}
The topological pushout $\T$ admits the following description:

\textbf{Vertices:} connected components of\/ $\Omega=\Omega_u\sqcup\Omega_v=\Omega_{ab}\bigvee_{\Omega_{abc}}\Omega_{bc}\bigvee_{\Omega_{abc}}\Omega_{ac} = A\cup B\cup C$.

\textbf{Edges:} pairs of connected components of $A$, $B$, $C$ from the pairing in Proposition~\ref{prop:dicksdual}.

If $e=\{Z,Z'\}$ is such a pair, viewed as a directed edge, it inherits its type from the corresponding graph it belongs to: if $e\subset A$, then $e$ is an $a$--edge, if $e\subset B$, then $e$ is a $b$--edge, and if $e\subset C$ then $e$ is a $c$--edge. The origin and terminus maps are defined as follows. If $e\subset A$, then one of $Z,Z'$ is a connected component of\/ $\Omega_{u,a}$, and the other, of\/ $\Omega_{v,a}$. Let $Z\subset\Omega_{u,a}$, $Z'\subset\Omega_{v,a}$, say. Then the origin of $e$ is the connected component of $(A\cup B\cup C)\cap \Omega_{u}$  in which $Z$ lies, 
and the terminus of $e$ is the connected component of $(A\cup B\cup C)\cap \Omega_{v}$ in which $Z'$ lies. 
Similar definitions apply to $B$ and $C$ in place of $A$.
\end{prop}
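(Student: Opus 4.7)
The plan is to deduce the proposition from the description of $\T$ already recorded in Subsection~\ref{ssec:tp}: the vertices of $\T$ are in bijection with the connected components of $\Omega=\Omega_u\sqcup\Omega_v$, and the edges of $\T$ are in bijection with the connected components of $\Omega_a\sqcup\Omega_b\sqcup\Omega_c$, with origin and terminus induced by the injective morphisms $\tilde o$ and $\tilde t$. This is immediate from the definition of the equivalence relation $\sim$: two vertices (respectively, edges) of $\G_H\sqcup\G_K$ are $\sim$-related exactly when they can be linked by a finite chain of coincidences coming from vertices (respectively, edges) of $\G_{H\cap K}$, which is precisely connectedness in $\Omega$ (respectively, in $\Omega_a\sqcup\Omega_b\sqcup\Omega_c$).

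The first step is to prove the equality of graphs $\Omega=A\cup B\cup C$. The inclusion $\supseteq$ is immediate, so only the reverse needs work. Using Remark~\ref{rem:novalence1}, every vertex of $\Omega_u$ comes from a valence $\ge 2$ vertex of $\G_H$ or $\G_K$, and by properness of the labeling such a vertex is the origin of edges carrying at least two distinct labels from $\{a,b,c\}$; hence it belongs to at least two of $\Omega_{u,a},\Omega_{u,b},\Omega_{u,c}$ and so to $\Omega_{u,ab}\cup\Omega_{u,ac}\cup\Omega_{u,bc}$. The same argument applied to $u$-vertices of $\G_{H\cap K}$, again of valence $\ge 2$ by Remark~\ref{rem:novalence1}, shows that every edge of $\Omega_u$ lies in the same union of subgraphs. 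Running the parallel reasoning for $\Omega_v$ completes the verification of $\Omega=A\cup B\cup C$, and identifies the vertices of $\T$ with the connected components of $A\cup B\cup C$.

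Next I would identify the edges of $\T$ with dual pairs of components in $A$, $B$, $C$. The disjoint decomposition $A=\Omega_{u,a}\sqcup\Omega_{v,a}\cong\Omega_a\sqcup\Omega_a$ from~\eqref{eq:A} shows that each connected component $Y$ of $\Omega_a$ produces exactly two components $\tilde o(Y)\subset\Omega_{u,a}$ and $\tilde t(Y)\subset\Omega_{v,a}$ of $A$, and Dicks' duality (Proposition~\ref{prop:dicksdual}) asserts that these are precisely the dual pairs of components of $A$. Thus $Y\mapsto\{\tilde o(Y),\tilde t(Y)\}$ is a bijection between $a$-edges of $\T$ and dual pairs in $A$, and the arguments for the $b$- and $c$-edges via $B$ and $C$ are the same.

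Finally, for an $a$-edge $e$ of $\T$ coming from a component $Y$ of $\Omega_a$ and dual pair $\{Z,Z'\}$ of $A$ with $Z=\tilde o(Y)\subset\Omega_{u,a}$ and $Z'=\tilde t(Y)\subset\Omega_{v,a}$, the origin of $e$ in $\T$ is the $\sim$-class in $V(\G_H)\sqcup V(\G_K)$ of the common origin of any edge in $Y$, and this set of origins is exactly the vertex set of $Z$. Since $\Omega=A\cup B\cup C$, the connected component of $\Omega_u$ carrying $Z$ coincides with the connected component of $(A\cup B\cup C)\cap\Omega_u$ containing $Z$, which is the claimed description. The terminus and the analogous assertions for $B$ and $C$ follow identically. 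The bulk of the proof is bookkeeping; the only subtle point is the careful use of the no-valence-$1$ hypothesis to upgrade the graph-theoretic inclusion to the equality $\Omega=A\cup B\cup C$.
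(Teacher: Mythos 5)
Your proposal is correct and follows essentially the same route as the paper's proof: both reduce the claim to the description of $\T$ from Subsection~\ref{ssec:tp}, establish $\Omega=A\cup B\cup C$ via the no-valence-$1$ normalization of Remark~\ref{rem:novalence1}, and then use the decomposition $A=\Omega_{u,a}\sqcup\Omega_{v,a}$ of~\eqref{eq:A} together with Dicks' duality to match components of $\Omega_a$, $\Omega_b$, $\Omega_c$ with dual pairs of components of $A$, $B$, $C$. The only cosmetic difference is that you verify the inclusion $\Omega\subseteq A\cup B\cup C$ directly through the pairwise intersections $\Omega_{u,ab}$, $\Omega_{u,ac}$, $\Omega_{u,bc}$, whereas the paper first writes $\Omega_u=\Omega_{u,a}\cup\Omega_{u,b}\cup\Omega_{u,c}$ and then invokes~\eqref{eq:A}.
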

\begin{proof}
By the definition of $\T$, its vertices are the connected components of $\Omega=\Omega_u\sqcup\Omega_v$. Since every vertex of $\G_H$, $\G_K$ is incident to either an $a$--edge, a $b$--edge, or a $c$--edge, the same is true for $\T$. Hence $\Omega_u=\Omega_{u,a}\cup\Omega_{u,b}\cup\Omega_{u,c}$ and $\Omega_v=\Omega_{v,a}\cup\Omega_{v,b}\cup\Omega_{v,c}$. From~\eqref{eq:A} we see that $\Omega_{u,a}\sqcup\Omega_{v,a}=A$, $\Omega_{u,b}\sqcup\Omega_{v,b}=B$, and $\Omega_{u,c}\sqcup\Omega_{v,c}=C$. It follows that $\Omega_u\sqcup\Omega_v=A\cup B\cup C$.

The edges of $\T$, by definition, are the connected components of $\Omega_a\sqcup\Omega_b\sqcup\Omega_c$, with the attaching maps $\tilde o$, $\tilde t$, defined above. 
If $Z_0$ is a connected component of $\Omega_a$, say, it defines two isomorphic connected subgraphs $Z=\tilde o(Z_0)\subset\Omega_{u,a}$ and $Z'=\tilde t(Z_0)\subset\Omega_{v,a}$. Since $\Omega_{u,a}$ and $\Omega_{v,a}$ are, by definition, isomorphic copies of $\Omega_a$, the subgraphs $Z,Z'$ are the whole connected components of $\Omega_{u,a}$, $\Omega_{v,a}$, respectively. By~\eqref{eq:A}, $A=\Omega_{u,a}\sqcup\Omega_{v,a}$, therefore the pair $\{Z,Z'\}$ is a pair of connected components of $A$ determined by $Z_0$. The attaching maps for $\{Z,Z'\}$ described in the Proposition are induced by $\tilde o$ and $\tilde t$ applied to $Z_0$. 
\end{proof}

Figure~\ref{fig:toppush} shows the topological pushout for the subgroups of Example~\ref{ex:main},  
modeled by subsets $\Omega_{ab}$, $\Omega_{bc}$, $\Omega_{ac}$, $\Omega_{abc}$, in accordance with Proposition~\ref{prop:toppush}. Notice that the connected components of $\Omega_a,\Omega_b,\Omega_c$ establish bijections between parts $A,B,C$ of $\Omega_u$ (the left `trefoil') and the corresponding parts of $\Omega_v$ (the right `trefoil'). In particular, the connected graph $\Omega_a$ from Figure~\ref{fig:dicks-uvabc} acts as the $a$--edge of $\T$ and establishes a bijection between $\tilde o(\Omega_a)=\{1{-}7{-}3\}$ with $\tilde t(\Omega_a)=\{2{-}6{-}4\}$. Also, the connected graph $\Omega_b$ from Figure~\ref{fig:dicks-uvabc} is the $b$--edge of $\T$ and it establishes a bijection between $\tilde o(\Omega_b)=\{5{-}3{-}7\}$ and $\tilde t(\Omega_b)=\{8{-}2{-}6\}$. Finally, each connected component of $\Omega_c$ from Figure~\ref{fig:dicks-uvabc} serves as a $c$--edge of $\T$ and they establish bijections of the two connected components $1{-}7$, $3{-}5$ of $\Omega_{u,c}$ with $2{-}8$, $4{-}6$ of $\Omega_{v,c}$. Notice also that this bijection does not preserve subsets $\Omega_{ac}$ and $\Omega_{bc}$ individually but leaves invariant their union $C=\Omega_{ac}\bigvee_{\Omega_{abc}}\Omega_{bc}$: the subgraph $1{-}7$ of $\Omega_{u,ac}$ gets paired with the subgraph $2{-}8$ of $\Omega_{v,bc}$, and also $3{-}5$ of $\Omega_{u,bc}$ gets paired with $4{-}6$ of $\Omega_{v,ac}$. The curved edges $3{-}7$ and $2{-}6$ reflect the fact that vertices $3$ and $7$ (and also $2$ and $6$) are adjacent in $\Omega_{ab}$, but not in $\Omega_{abc}$.
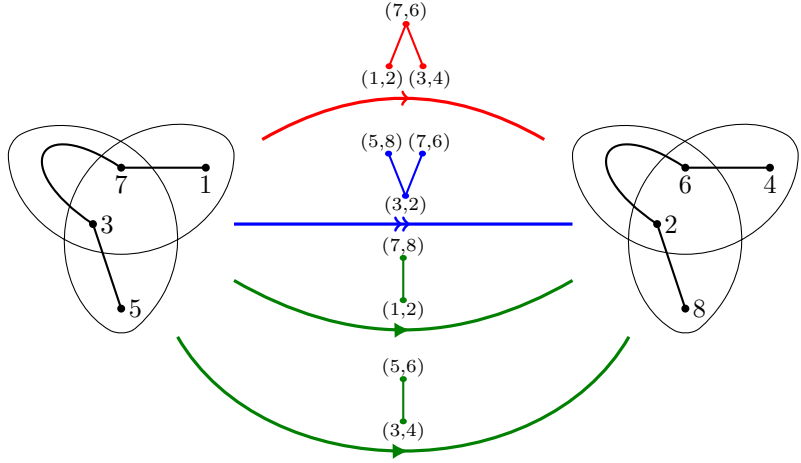
\begin{figure}[ht]
\begin{center}
\begin{tikzpicture}[scale=0.75]
\draw [rounded corners=5pt] (0,3.464) arc (60:-60:2) arc (240:60:2) arc (360:180:2) arc (120:60:2);
\fill (1.5,3) circle (2pt);
\fill (0,3) circle (2pt);
\fill (-0.5,2) circle (2pt);
\fill (0,0.5) circle (2pt);
\draw [thick] (1.5,3)--(0,3);
\draw [thick] (-0.5,2)--(0,0.5);
\draw [thick] (0,3) .. controls (-1.5,4) and (-2,3) .. (-0.5,2);
\draw (1.5,2.7) node {$1$};
\draw (0,2.7) node {$7$};
\draw (-0.25,2) node {$3$};
\draw (0.25,0.5) node {$5$};

\draw[->-i=0.52,color=red,very thick] (2.5,3.5) to [out=30, in=150] (7.5,3.5);
\draw[->-ii=0.52,color=blue,very thick] (2,2) to [out=0, in=180] (8,2);
\draw[->-=0.52,color=mygreen,very thick] (2,1) to [out=-30, in=210] (8,1);
\draw[->-=0.51,color=mygreen,very thick] (1,0) to [out=-60, in=240] (9,0);

\begin{scope}[thick, xshift=4.75cm, yshift=4.8cm, xscale=0.3, yscale=0.25, red]
\fill (0,0) circle (6pt);
\fill (2,0) circle (6pt);
\fill (1,3) circle (6pt);
\draw (0,0)--(1,3);
\draw (2,0)--(1,3);
\draw [black] (-0.4,-0.9) node {\scriptsize(1,2)}; 
\draw [black] (2.4,-0.9) node {\scriptsize(3,4)}; 
\draw [black] (1,3.9) node {\scriptsize(7,6)}; 
\end{scope}

\begin{scope}[thick, xshift=4.74cm, yshift=2.5cm, xscale=0.3, yscale=0.25, blue]
\fill (1,0) circle (6pt);
\fill (0,3) circle (6pt);
\fill (2,3) circle (6pt);
\draw (1,0)--(0,3);
\draw (2,3)--(1,0);
\draw [black] (1,-0.7) node {\scriptsize(3,2)}; 
\draw [black] (-0.4,3.8) node {\scriptsize(5,8)}; 
\draw [black] (2.4,3.8) node {\scriptsize(7,6)}; 
\end{scope}

\begin{scope}[thick, xshift=5.0cm, yshift=0.65cm, xscale=0.3, yscale=0.25, mygreen]
\fill (0,0) circle (6pt);
\fill (0,3) circle (6pt);
\draw (0,0)--(0,3);
\draw [black] (0,-0.7) node {\scriptsize(1,2)}; 
\draw [black] (0,3.8) node {\scriptsize(7,8)}; 
\end{scope}

\begin{scope}[thick, xshift=4.4cm, yshift=-1.5cm, xscale=0.3, yscale=0.25, mygreen]
\fill (2,0) circle (6pt);
\fill (2,3) circle (6pt);
\draw (2,0)--(2,3);
\draw [black] (2,-0.7) node {\scriptsize(3,4)}; 
\draw [black] (2,3.8) node {\scriptsize(5,6)}; 
\end{scope}

\begin{scope}[xshift=10cm]
\draw [rounded corners=5pt] (0,3.464) arc (60:-60:2) arc (240:60:2) arc (360:180:2) arc (120:60:2);
\fill (1.5,3) circle (2pt);
\fill (0,3) circle (2pt);
\fill (-0.5,2) circle (2pt);
\fill (0,0.5) circle (2pt);
\draw [thick] (1.5,3)--(0,3);
\draw [thick] (-0.5,2)--(0,0.5);
\draw [thick] (0,3) .. controls (-1.5,4) and (-2,3) .. (-0.5,2);
\draw (1.5,2.7) node {$4$};
\draw (0,2.7) node {$6$};
\draw (-0.25,2) node {$2$};
\draw (0.25,0.5) node {$8$};

\end{scope}

\end{tikzpicture}
\end{center}
\caption{The topological pushout $\T$ from Figure~\ref{fig:mainpic} modeled on the Dicks graphs. 
Vertices $3$ and $7$ (and also $2$ and $6$) are adjacent in $\Omega_{ab}$, but not in $\Omega_{abc}$, which is depicted by a curved edge lying in the `petal' for $\Omega_{ab}$. 
\label{fig:toppush}}
\end{figure}

Now we are ready to relate ranks of $H$, $K$, $H\cap K$ and $\T$ with the structure of the Dicks graph $\Omega=\Omega_{ab}\bigvee_{\Omega_{abc}}\Omega_{bc}\bigvee_{\Omega_{abc}}\Omega_{ac}$. We are going to prove the following Theorem~\ref{thm:ranks} assuming the validity of Proposition~\ref{prop:cn}, which will be proved in Section~\ref{sec:cn}. Recall that a \emph{cycle} in an undirected graph is a path $x_0e_0x_1e_1\dots x_ke_kx_0$ such that all vertices $x_i$ are pairwise different. We denote by $\#W$ the cardinality of a finite set $W$ and, as before, by $\rr(H)$ the reduced rank of group $H$, i.e.\ the quantity $\max(0,\rk(H)-1)$.

\begin{thm}\label{thm:ranks}
Let $F$ be a free group and $H,K\le F$ be finitely generated subgroups. Let\/ $\G_H$, $\G_K$ and\/ $\G_{H\cap K}$ be the core graphs of the corresponding subgroups, and let $\T$ be their topological pushout. Let also\/ $\Omega$ and\/ $\Omega_{abc}$ be the Dicks graphs defined above. Then
\begin{enumerate}
\item $\Omega_{abc}$ is a bipartite graph with\/ $2\rr(H)$ vertices in one part and\/ $2\rr(K)$ vertices in the other;
\item $\Omega_{abc}$ has\/ $2\rr(H\cap K)$ edges;
\item (\# connected components of\/ $\Omega_{abc}$) $\ge 2\rr(\T)$, with the equality taking place if and only if every cycle of $\Omega$ lies entirely in one of the subgraphs\/ $\Omega_{ab}$, $\Omega_{bc}$, $\Omega_{ac}$ (with different cycles possibly lying in different subgraphs).
\end{enumerate}
\end{thm}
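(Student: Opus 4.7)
The plan is to dispatch (1) and (2) by a quick valence count, and to prove (3) by reducing it to a linear-algebra statement about cycle spaces of the subgraphs $\Omega_{u,\ast}$ and $\Omega_{v,\ast}$. For (1), a vertex of $\Omega_{abc}$ on the $\G_H$--side is a $u$- or $v$-type vertex of $\G_H$ which is the origin of edges of all three labels, i.e.\ of valence~$3$; similarly on the $\G_K$--side. By Remark~\ref{rem:novalence1} the graphs $\G_H$, $\G_K$, $\G_{H\cap K}$ have only vertices of valence $2$ or $3$, and an Euler-characteristic count for a connected labeled graph over $X$ shows that its rank equals $\tfrac12 v_3+1$, where $v_3$ is the number of valence-$3$ vertices. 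Hence $\G_H$ has exactly $2\rr(H)$ valence-$3$ vertices and $\G_K$ has exactly $2\rr(K)$, proving~(1); an identical argument in $\G_{H\cap K}$, whose valence-$3$ vertices are precisely the edges of $\Omega_{abc}$, proves~(2).

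For (3), the structural input from Remark~\ref{rem:novalence1} is that every vertex and every edge of $\Omega_u$ lies in at least two of $\Omega_{u,a}$, $\Omega_{u,b}$, $\Omega_{u,c}$, so $\Omega_u = \Omega_{u,a}\cup\Omega_{u,b}\cup\Omega_{u,c}$ with pairwise intersections $\Omega_{u,xy}$ and triple intersection $\Omega_{u,abc}$. Inclusion--exclusion for this triple cover, together with the binary covers $\Omega_{u,a}=\Omega_{u,ab}\cup\Omega_{u,ac}$ (etc.), yields the Euler-characteristic identity
\[
\chi(\Omega_{u,abc}) \;=\; \chi(\Omega_{u,a})+\chi(\Omega_{u,b})+\chi(\Omega_{u,c}) \;-\; 2\chi(\Omega_u),
\]
and the analogous identity for $v$. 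Treating $V_x := Z_1(\Omega_{u,x};\mathbb{F}_2)$ as three subspaces of $V:=Z_1(\Omega_u;\mathbb{F}_2)$, and noting that cycle spaces of subgraphs intersect as cycle spaces of intersections, one has $V_a\cap V_b\cap V_c = Z_1(\Omega_{u,abc})$. The standard codimension inequality $\operatorname{codim}(V_a\cap V_b\cap V_c)\le \sum_x\operatorname{codim}(V_x)$ rearranges to
\[
b_1(\Omega_{u,a})+b_1(\Omega_{u,b})+b_1(\Omega_{u,c})\;\le\;2\,b_1(\Omega_u)+b_1(\Omega_{u,abc}),
\]
which, via the Euler identity and $\chi=\#\pi_0-b_1$, converts directly into
\[
\#\pi_0(\Omega_{u,a})+\#\pi_0(\Omega_{u,b})+\#\pi_0(\Omega_{u,c})\;\le\;2\,\#\pi_0(\Omega_u)+\#\pi_0(\Omega_{u,abc}).
\]
Adding the $v$-analogue, invoking Dicks' duality (Proposition~\ref{prop:dicksdual}) in the form $\#\pi_0(\Omega_x)=\#\pi_0(\Omega_{u,x})=\#\pi_0(\Omega_{v,x})$, and comparing with $\#V(\T)=\#\pi_0(\Omega)$ and $\#E(\T)=\sum_x\#\pi_0(\Omega_x)$ from Proposition~\ref{prop:toppush} together with the connectedness of $\T$, delivers $\#\pi_0(\Omega_{abc})\ge 2\rr(\T)$.

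The main obstacle will be the equality case. Equality in the codimension step holds iff the map $V\to V/V_a\oplus V/V_b\oplus V/V_c$ is surjective, equivalently iff $V=V_a+V_{bc}=V_b+V_{ac}=V_c+V_{ab}$; the implication ``every simple cycle of $\Omega$ lies in some $\Omega_{xy}$ $\Rightarrow$ surjectivity'' is immediate, since then $V$ is spanned by cycles in $V_{ab}\cup V_{ac}\cup V_{bc}$. The converse direction is subtler: purely abstract bipartite graphs with the formal incidence data can satisfy surjectivity while still containing a simple cycle whose edges switch between $\Omega_{ab}$, $\Omega_{ac}$, and $\Omega_{bc}$. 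Ruling such configurations out requires exploiting the \emph{realizability} of $\Omega$ as the Dicks graph of genuine core graphs $\G_H,\G_K$; roughly, the absence of valence-$1$ vertices in $\G_{H\cap K}$ (Remark~\ref{rem:novalence1}) forces any attempted color-mixing cycle to drag along an extra subgraph that would be pruned during passage to the core graph. I expect this structural reduction to be precisely the content of Proposition~\ref{prop:cn}, whose proof in Section~\ref{sec:cn} would then complete the equality case.
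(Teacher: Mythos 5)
Your parts (1) and (2) coincide with the paper's argument (an Euler-characteristic count using that, after Remark~\ref{rem:novalence1}, all vertices have valence $2$ or $3$). Your derivation of the inequality in (3) is a genuinely different route from the paper's and, as far as I can check, it is correct: the paper instead writes $2\rr(\T)=\sum_{v\in V(\T)}(\val(v)-2)$, packages the connectivity of the components of $\Omega_{abc}$ through $\Omega_{ab},\Omega_{ac},\Omega_{bc}$ into an auxiliary edge-colored multigraph (the ``component connectivity graph''), and bounds the resulting function $\Sigma$ by an edge-by-edge induction (Proposition~\ref{prop:cn}). Your inclusion--exclusion identity $\chi(\Omega_{u,abc})=\sum_x\chi(\Omega_{u,x})-2\chi(\Omega_u)$ (which, as you note, needs every vertex \emph{and} edge of $\Omega_u$ to lie in at least two of the $\Omega_{u,x}$, i.e.\ Remark~\ref{rem:novalence1}), combined with $\operatorname{codim}(V_a\cap V_b\cap V_c)\le\sum_x\operatorname{codim}V_x$, Dicks duality, and $-2\chi(\T)=2\rr(\T)$, gets the inequality with less machinery and also yields the ``if'' half of the equality criterion. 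That much I would accept.

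The gap is the ``only if'' half of the equality statement, and it cannot be closed by pointing at Proposition~\ref{prop:cn}. That proposition is a statement about the function $\Sigma$ on the colored multigraph class $\Cn$; to invoke it you would first have to construct the component connectivity graph and show that its $\Sigma$-value computes $\sum(\val(v)-2)$ --- at which point you have reproduced the paper's proof and your linear algebra is superfluous --- and there is no bridge from your surjectivity condition $V=V_a+V_{bc}=V_b+V_{ac}=V_c+V_{ab}$ to the monochromaticity hypothesis of that proposition. Moreover your own worry is justified: surjectivity really does not imply that every cycle of $\Omega$ lies in a single $\Omega_{xy}$. For example, take a $4$--cycle $p_1q_1p_2q_2$ with $p_1q_1,\,p_2q_2\in\Omega_{ab}\setminus\Omega_{abc}$ and $q_1p_2,\,q_2p_1\in\Omega_{ac}\setminus\Omega_{abc}$, together with two disjoint paths inside $\Omega_{abc}$ joining $p_1$ to $q_1$ and $p_2$ to $q_2$. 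Here $V_a=V$ is $3$--dimensional, $V_b=V_{ab}$ is spanned by the two cycles closing up the $ab$--edges, $V_c=V_{ac}$ is spanned by the $8$--cycle through the $ac$--edges, all three conditions $V=V_a+V_{bc}=V_b+V_{ac}=V_c+V_{ab}$ hold (so your codimension step is an equality), yet the $4$--cycle is not contained in any $\Omega_{xy}$. So the converse genuinely requires showing that such configurations cannot arise in an actual Dicks graph; this is the structural input you gesture at but never supply, and it is precisely the clause of part (3) that powers Proposition~\ref{prop:recast}(2) and hence Theorem~\ref{thm:guzman4}. As written, the equality criterion remains unproved.
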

\begin{proof}
The graph $\Omega_{abc}$ has as its vertices all vertices of valence $3$ of $\G_H$ and $\G_K$. Since graphs $\G_H$, $\G_K$, $\G_{H\cap K}$ are normalized as in Remark~\ref{rem:novalence1}, all their vertices have valence either $2$ or $3$. Computing Euler characteristic of $\G$ (where $\G$ stands for any of $\G_H$, $\G_K$, $\G_{H\cap K}$) gives:
\[
1-\rk(\G) = \#V(\G)-\#\big(E^+(\G)\big)= \sum_{v\in V(\G)}\left(1-\frac{\val(v)}2\right),
\]
which is equivalent to
\[
2\rr(\G)=\sum_{v\in V(\G)}\big(\!\val(v)-2\big)=\text{\ \big(\# vertices of valence $3$ in $\G$\big)},
\]
the last equality being true since vertices of valence $2$ contribute $0$ to the sum. This proves part (1).

Edges of $\Omega_{abc}$ are exactly the vertices of valence $3$ in $\G_{H\cap K}$, hence the computation above establishes part (2) as well.

For part (3), exactly as above, we have 
\begin{equation}\label{eq:val}
2\rr(\T)=\sum_{v \in V(\T)} \big(\!\val(v)-2\big).
\end{equation}
According to Proposition~\ref{prop:toppush}, the valence of a vertex $v$ of $\T$ (represented by some connected component $D$ of $A\cup B\cup C$) is the sum of numbers of connected components of $A\cap D$, $B\cap D$ and $C\cap D$. Our goal is to understand the relationship between components of $\Omega_{abc}$, components of $A\cup B\cup C$, and components of $A$, $B$ and $C$, taken separately. The difficulty lies in an observation that two components $P$, $Q$ of $\Omega_{abc}$ may be connected by a path outside $\Omega_{abc}$, i.e.\ by a path all edges of which lie in one of the graphs $\Omega_{ab}\setminus\Omega_{abc}$, $\Omega_{bc}\setminus\Omega_{abc}$, $\Omega_{ac}\setminus\Omega_{abc}$. If this is the case, then $P$ and $Q$ actually correspond to the same connected component $D$ of $A\cup B\cup C$, i.e.\ to the same vertex of $\T$, and their contribution to the valence of $\T$ may be different from the value $2\cdot 3$ expected otherwise. A careful treatment of this situation is given in Section~\ref{sec:cn}, where we take an abstract approach and study a certain class $\Cn$ of graphs $\G$ with a function $\Sigma$ associated to them, which encode the connectedness of components of $\Omega_{abc}$ to each other through the three graphs $\Omega_{ab}$, $\Omega_{bc}$, $\Omega_{ac}$, and their joint contribution to the right-hand side of~\eqref{eq:val}. To get the input for the main result of Section~\ref{sec:cn}, Proposition~\ref{prop:cn}, we form the following undirected graph $\G$, which we will call the \emph{component connectivity graph} (CCG) of $\Omega_{abc}$.

Vertices of $\G$ are connected components of $\Omega_{abc}$.

If $p$, $q\in V(\G)$, they are some components $P$, $Q$ of $\Omega_{abc}$. If there exists a path between $P$ and $Q$ which does not contain any vertices of $\Omega_{abc}$ (except the first vertex of the path and the last one), and all edges of which lie in $\Omega_{ab}\setminus\Omega_{abc}$, we connect vertices $p,q$ in $E(\G)$ with an undirected edge and assign the color \emph{magenta} to it. Similarly, if a $\Omega_{abc}$-avoidant path between $P$ and $Q$ lies in $\Omega_{ac}\setminus\Omega_{abc}$, we add an edge to $E(\G)$ connecting $p$ and $q$ and assign the color \emph{yellow} to it. Lastly, if such path lies in $\Omega_{bc}\setminus\Omega_{abc}$, we add an edge to $E(\G)$ connecting $p$ and $q$ and assign the color \emph{cyan} to it. Thus every two vertices of $\G$ may be connected by up to three undirected edges, each having a different color. (The choice of names for the colors is suggested by mixing the basic colors red, blue and green, which we used to depict $a$--edges, $b$--edges and $c$--edges, respectively. Hence, edges from $\Omega_{ab}\setminus\Omega_{abc}$ get color red-blue, i.e.\ magenta, edges from $\Omega_{ac}\setminus\Omega_{abc}$ get color red-green, i.e.\ yellow, and edges from $\Omega_{bc}\setminus\Omega_{abc}$ get color blue-green, i.e.\ cyan.) 

Thus, the graph $\G$ encodes the connectedness information (within $A\cup B\cup C$) between different connected components of $\Omega_{abc}$. The contribution of vertices of $\T$ to the sum in~\eqref{eq:val} (i.e.\ the right-hand side of \eqref{eq:val}) is equal to the function $\Sigma(\G)$, defined in equation~\eqref{eq:sigma} of Section~\ref{sec:cn}.

Having formed the input for Proposition~\ref{prop:cn}, we can use its conclusion, which reads: $\Sigma(\G)\le n$. Here $n$ is the number of vertices of $\G$, i.e.\ the number of connected components of $\Omega_{abc}$, and $\Sigma(\G)$ is the right-hand side of~\eqref{eq:val}. This proves the inequality in part (3) of the Theorem. Proposition~\ref{prop:cn} also specifies when we have the equality in $\Sigma(\G)\le n$: this happens if and only if all cycles of $\G$ are \emph{monochromatic} in the terminology of Section~\ref{sec:cn}. In terms of the Dicks graphs, this means that every cycle of $\Omega_{ab}\bigvee_{\Omega_{abc}}\Omega_{bc}\bigvee_{\Omega_{abc}}\Omega_{ac}$ lies entirely in either $\Omega_{ab}$, or $\Omega_{bc}$, or $\Omega_{ac}$. This finishes the proof of part (3) of the Theorem.
\end{proof}

Situations when the inequality in part (3) of the Theorem~\ref{thm:ranks} is strict appear quite frequently, see for example Figure~\ref{fig:k23} in Section~\ref{sec:guzman4}.

\section{Class \texorpdfstring{$\Cn$}{Cn}}\label{sec:cn}

The goal of this technical section is to prove Proposition~\ref{prop:cn} needed for the proof of part (3) of Theorem~\ref{thm:ranks}.

Consider a class $\Cn$ of pairs $(\G,c)$ where $\G$ is an undirected graph with multiple edges allowed (but not loops), and $c\colon E(\G)\to \{\text{{\it magenta, yellow, cyan}}\}$ is an edge-coloring map, with the following properties:
\begin{enumerate}
\item Each graph $\G$ from $\Cn$  has exactly $n$ vertices.
\item Let $E(\G,p,q)=E(\G,q,p)$ denote the set of all undirected edges between two different vertices $p,q\in V(\G)$. Then the edge-coloring map $c$ is injective on each set $E(\G,p,q)$.
\end{enumerate}
In other words, any two different vertices $p,q$ of $\G$ may be joined by up to three undirected edges of different colors from the set {\it\{magenta, yellow, cyan\}}.

For any $\G\in\Cn$ we define three subgraphs $\G_{my}$, $\G_{yc}$, $\G_{mc}$ of $\G$ as follows:
\begin{itemize}
\item $V(\G_{my})=V(\G_{yc})=V(\G_{mc})=V(\G)$;
\item $E(\G_{my})=\{\text{all {\it magenta} and {\it yellow} edges of $\G$}\}$;
\item $E(\G_{yc})=\{\text{all {\it yellow} and {\it cyan} edges of $\G$}\}$;
\item $E(\G_{mc})=\{\text{all {\it magenta} and {\it cyan} edges of $\G$}\}$;
\end{itemize}

\begin{figure}[ht]
\begin{center}
\begin{tikzpicture}[scale=0.75]
\draw [rounded corners=5pt] (0,3.464) arc (60:-60:2) arc (240:60:2) arc (360:180:2) arc (120:60:2);
\begin{scope}[yshift=0.5cm]
\draw [snake=brace,segment amplitude=2mm] (-2,4) -- (2,4); \draw (0,4.5) node {$\G_{my}$};
\end{scope}
\begin{scope}[xshift=-0.25cm,yshift=-0.25cm]
\draw [snake=brace,segment amplitude=2mm] (-0.6,-0.25) -- (-2.5,3); \draw (-2,1) node {$\G_{mc}$};
\end{scope}
\begin{scope}[xshift=0.25cm,yshift=-0.25cm]
\draw [snake=brace,segment amplitude=2mm] (2.5,3) -- (0.6,-0.25); \draw (2.25,1) node {$\G_{yc}$};
\end{scope}
\draw[node font=\itshape] (-1.25,3) node [align=center,magenta,font=\scriptsize] {magenta\\ edges};
\draw[node font=\itshape] (1.25,3) node [align=center,orange,font=\scriptsize] {yellow\\ edges};
\draw[node font=\itshape] (0,0.75) node [align=center,cyan,font=\scriptsize] {cyan\\ edges};
\fill (0,3) circle (2pt);
\fill (-0.5,2) circle (2pt);
\fill (0.5,2) circle (2pt);
\draw (0,2.35) node {\footnotesize $V(\G)$};
\end{tikzpicture}
\end{center}
\caption{Venn diagram for graphs $\G_{my}$, $\G_{yc}$, $\G_{mc}$.}
\end{figure}
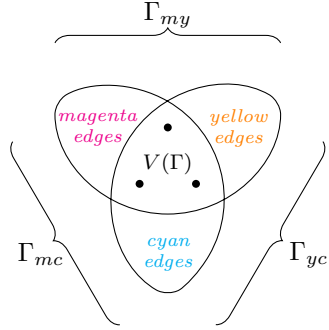

We define a function $\Sigma\colon\Cn\to\Z_{\ge0}$ as follows:
\begin{equation}\label{eq:sigma}
\Sigma(\G)=\sum_{C\in CC(\G)} \big(\valmy(C)+\valyc(C)+\valmc(C)-2\big),
\end{equation}
where $CC(\G)$ denotes the set of all connected components of $\G$, and
\[
\val_{colors}(C)=\text{ \# connected components of $(C\cap \G_{colors})$}
\]
for $\mathit{colors}\in\{my,yc,mc\}$.

Recall that a {\it cycle} in an undirected graph $\G$ is a sequence $x_0e_0x_1e_1\dots x_ke_kx_0$ of pairwise different vertices $x_i$ and edges $e_i\in E(\G,x_i,x_{i+1})$, $e_k\in E(\G,x_{k},x_0)$. A cycle is called {\it monochromatic} if all its edges are of the same color (from the set \{{\it magenta, yellow, cyan}\}).

\begin{prop}\label{prop:cn}
For any\/ $\G\in\Cn$, we have: $\Sigma(\G)\le n$, with the equality taking place if and only if all cycles of\/ $\G$ are monochromatic (with different cycles possibly having different colors).
\end{prop}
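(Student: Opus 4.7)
The plan is to induct on $|E(\G)|$. For the base case $|E(\G)| = 0$ we have $\G = \G_{my} = \G_{yc} = \G_{mc}$ equal to the graph with $n$ isolated vertices, so $\valmy(C) = \valyc(C) = \valmc(C) = 1$ for each singleton component $C$, giving $\Sigma(\G) = n(1+1+1-2) = n$. Equality holds, and the monochromatic condition is vacuous since there are no cycles.

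For the inductive step I would first rewrite
\[
\Sigma(\G) = |CC(\G_{my})| + |CC(\G_{yc})| + |CC(\G_{mc})| - 2|CC(\G)|,
\]
valid because each component of any $\G_{xy}$ is contained in a unique component of $\G$. Pick any edge $e \in E(\G)$ with endpoints $p, q$, say of color \emph{magenta} (the other two color cases are symmetric), and set $\G' = \G \setminus \{e\}$. Removing $e$ affects only $\G$, $\G_{my}$, $\G_{mc}$, each of which gains at most one connected component. Let $\alpha, \alpha_1, \alpha_2 \in \{0,1\}$ indicate whether $p$ and $q$ lie in the same connected component of $\G'$, $\G'_{my}$, $\G'_{mc}$, respectively. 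Since $\G'_{my}, \G'_{mc} \subseteq \G'$, we have $\alpha_1, \alpha_2 \le \alpha$, and a direct calculation gives
\[
\Sigma(\G) - \Sigma(\G') = \alpha_1 + \alpha_2 - 2\alpha \le 0.
\]
By the inductive hypothesis $\Sigma(\G') \le n$, hence $\Sigma(\G) \le n$.

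For the equality characterization I would invoke the structural fact that \emph{every cycle of $\G$ is monochromatic if and only if every biconnected component (block) of $\G$ is monochromatic}, which is immediate from the classical observation that any two edges in a $2$-connected graph lie on a common cycle. Given this, suppose $\Sigma(\G) = n$; then $\Sigma(\G') = n$, the inductive hypothesis implies every block of $\G'$ is monochromatic, and $\alpha_1 + \alpha_2 = 2\alpha$. Either $\alpha = 0$, in which case $e$ is a bridge and cycles of $\G$ coincide with cycles of $\G'$, or $\alpha = \alpha_1 = \alpha_2 = 1$. In the latter case, a $p$-to-$q$ path in $\G'_{my}$ (respectively $\G'_{mc}$) forces every block of $\G'$ along the block-cut-tree path from $p$ to $q$ to have color in $\{m,y\}$ (respectively $\{m,c\}$); intersecting shows each such block is magenta, so every $p$-to-$q$ path in $\G'$ is all magenta, and therefore every new cycle of $\G$ through $e$ is magenta-monochromatic. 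Conversely, if all cycles of $\G$ are monochromatic then so are those of $\G'$ (inductively giving $\Sigma(\G') = n$); and when $\alpha = 1$, completing any $p$-to-$q$ path $P$ in $\G'$ by $e$ to a cycle of $\G$ forces $P$ to be all magenta, whence $\alpha_1 = \alpha_2 = 1$. Either way $\Sigma(\G) = \Sigma(\G') = n$.

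I expect the main obstacle to be the equality analysis, where the purely combinatorial identity $\alpha_1 + \alpha_2 = 2\alpha$ must be translated into monochromaticity of the new cycles of $\G$ through $e$; the block-cut-tree description of graphs with monochromatic cycles makes this transparent by reducing the question to an intersection of color sets on the unique block-tree path from $p$ to $q$, so the induction goes through cleanly in both directions.
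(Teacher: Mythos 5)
Your proof is correct, and it shares the paper's overall skeleton --- an induction on the number of edges starting from the edgeless graph, where the whole content is how $\Sigma$ changes when one edge is added or removed --- but the execution differs in two worthwhile ways. First, your identity $\Sigma(\G)=\#CC(\G_{my})+\#CC(\G_{yc})+\#CC(\G_{mc})-2\,\#CC(\G)$ collapses the paper's case analysis (Case I versus Case II with subcases (1), (2), ($2'$), (3a), (3b), each tracking $\valmy$, $\valyc$, $\valmc$ separately) into the single computation $\Sigma(\G)-\Sigma(\G')=\alpha_1+\alpha_2-2\alpha\le 0$; the two bookkeepings are equivalent, but yours is visibly a one-liner. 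Second, and more substantively, your equality analysis runs through a different key lemma: the observation that ``all cycles monochromatic'' is equivalent to ``all blocks monochromatic,'' via the standard fact that any two edges of a $2$--connected graph lie on a common cycle. This lets you read off, from the block-cut-tree path between $p$ and $q$ and the intersection of the color sets $\{m,y\}$ and $\{m,c\}$, that every $p$--$q$ path in $\G'$ is entirely magenta, so the new cycles through $e$ are monochromatic. The paper instead handles this in subcase (3b) by extracting an explicit non-monochromatic cycle from a pair of $p$--$q$ paths in $\G_{my}$ and $\G_{mc}$ and then running an inner induction to conclude $\Sigma(\G)<n$ already held before the edge was added; your block-tree argument avoids that inner induction and makes the ``only if'' direction of the equality statement more transparent. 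One small point worth making explicit if you write this up: blocks and the two-edges-on-a-common-cycle fact must be interpreted in the multigraph sense, since $\Cn$ allows parallel edges (of distinct colors), and a pair of parallel edges is precisely a $2$--cycle; your argument does handle this correctly, but a reader will want the remark.
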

\begin{proof}
Our first observation is:

\underline{\it If\/ $\G\in \Cn$ is edgeless then\/ $\Sigma(\G)= n$.}

Indeed, in that case $\G$ has $n$ connected components which are singleton vertices and $\G_{my}=\G_{yc}=\G_{mc}=\G$, so $\Sigma(\G)=n\cdot(1+1+1-2)=n$.

We are going to prove that adding an edge to an arbitrary graph $\G\in\Cn$ may only decrease $\Sigma$, and we identify all cases when the decrease does not happen.

Choose two vertices $p,q\in V(\G)$ such that $|E(\G,p,q)|<3$ and consider 
\[
\G'=\G\cup e,
\]
where $e$ is a new edge between $p$ and $q$ of a color that is not present in $E(\G,p,q)$. Without loss of generality, we may assume that the color of edge $e$ is {\it magenta}.

Let $[p],[q]$ denote the connected components of $\G$ containing $p,q$, respectively, and let $[p]_{colors}$, $[q]_{colors}$ be the connected components of $\G_{colors}$ containing $p,q$, respectively, for $colors\in\{my,yc,mc\}$.

Also, let $[p]',[q]'$ denote the connected components of $\G'$ containing $p,q$, respectively, and let $[p]'_{colors}$, $[q]'_{colors}$ be the connected components of $\G'_{colors}$ containing $p,q$, respectively, for $colors\in\{my,yc,mc\}$.

Let also
\[
\val'_{colors}(C')=\text{ \# connected components of $(C'\cap \G'_{colors})$}
\]
for $C'$ a connected component of $\G'$ and $\mathit{colors}\in\{my,yc,mc\}$.

We look at several cases.

\underline{Case I: $[p]\ne[q]$}. Then $[p]_{colors}\ne[q]_{colors}$ for any $colors\in \{my,yc,mc\}$, and adding {\it magenta} edge $e$ to $p,q$ makes $[p]'=[q]'$, $[p]'_{my}=[q]'_{my}$ and $[p]'_{mc}=[q]'_{mc}$ while $[p]'_{yc}\ne[q]'_{yc}$ (since $\G_{yc}$ by definition has only yellow and cyan edges, so if $[p]'_{yc}=[q]'_{yc}$ then $[p]_{yc}=[q]_{yc}$). Thus 
\[
\Sigma(\G)=\big(\valmy[p]+\valyc[p]+\valmc[p] - 2\big) + 
\big(\valmy[q]+\valyc[q]+\valmc[q] - 2\big) + \sum_{\substack{C\ne[p],[q]\\ C\in CC(\G)}}(\dots),
\]
and
\[
\Sigma(\G')=\big(\valmy'[p]'+\valyc'[p]'+\valmc'[p]' - 2\big) + \sum_{\substack{C'\ne[p]'\\ C'\in CC(\G')}}(\dots),
\]

Now,
\begin{multline*}
\valmy'[p]'=\big(\text{\# components of $[p]'\cap \G'_{my}$}\big) = \\
\big(\text{\# components of $[p]\cap \G_{my}$}\big) + \big(\text{\# components of $[q]\cap \G_{my}$}\big) - 1 = \\
 \valmy[p]+\valmy[q]-1.
\end{multline*}

\begin{figure}[ht]
\begin{center}
\begin{tikzpicture}[scale=1.25]
\draw[rounded corners=8pt] (0,3.464) arc (60:-60:2) arc (240:60:2) arc (360:180:2) arc (120:60:2);
\begin{scope}[xshift=0.1cm]
\draw (-0.8,1.8) .. controls (-0.7,0.25) and (0.0,0.25) .. (-0.1,1.8);
\draw (-0.6,1.8) .. controls (-0.5,0.50) and (-0.1,0.50) .. (-0.1,1.8);
\draw (-0.4,1.8) .. controls (-0.3,0.75) and (-0.2,0.75) .. (-0.1,1.8);
\draw (-0.1,0.5) node {\footnotesize$[p]$};
\draw (-1,1.8) node {\footnotesize$p$};
\draw [snake=brace] (-0.85,1.9)--(-0.05,1.9); \draw (-0.5,2.15) node {\footnotesize$\valmy[p]$};
\fill (-0.8,1.8) circle (1.2pt);
\fill (-0.6,1.8) circle (1.2pt);
\fill (-0.4,1.8) circle (1.2pt);
\fill (-0.1,1.8) circle (1.2pt);
\end{scope}

\draw (0.25,2.75) .. controls (0.25,0.25) and (0.85,0.25) .. (0.75,2.25);
\draw (0.5,2.5) .. controls (0.5,0.5) and (0.7,0.5) .. (0.75,2.25);
\draw (0.4,0.65) node {\footnotesize$[q]$};
\draw (0.125,2.625) node {\footnotesize$q$};
\draw [snake=brace] (0.25,2.85)--(0.85,2.25); \draw (1.25,2.75) node {\footnotesize$\valmy[q]$};

\draw [very thick, dashed, magenta] (-0.7,1.8) .. controls (-2.2,2.8) and (-1.3,4.3) .. (0.25,2.75);
\draw [magenta] (-1.2,3) node {$e$};
\fill (-0.7,1.8) circle (1.2pt);

\fill (0.25,2.75) circle (1.2pt);
\fill (0.5,2.5) circle (1.2pt);
\fill (0.75,2.25) circle (1.2pt);

\end{tikzpicture}
\end{center}
\caption{Case I.}
\end{figure}

Similarly,
\begin{multline*}
\valmc'[p]'=\big(\text{\# components of $[p]'\cap \G'_{mc}$}\big) = \\
\big(\text{\# components of $[p]\cap \G_{mc}$}\big) + \big(\text{\# components of $[q]\cap \G_{mc}$}\big) - 1 = \\
 \valmc[p]+\valmc[q]-1.
\end{multline*}

And
\begin{multline*}
\valyc'[p]'=\big(\text{\# components of $[p]'\cap \G'_{yc}$}\big) = \big(\text{\# components of $[p]'\cap \G_{yc}$}\big)=\\
\big(\text{\# components of $([p]\cup[q])\cap \G_{yc}$}\big) = \big(\text{\# components of $([p]\cap \G_{yc})\cup([q]\cap \G_{yc})$}\big) = \\
 \valyc[p]+\valyc[q].
\end{multline*}

Comparing the contributions of the left- and right-hand sides to the function $\Sigma$, and noticing that the components $C\in CC(\G)\setminus\{[p],[q]\}$ and $C'\in CC(\G')\setminus\{[p]'\}$ pairwise coincide, we conclude that the two sums are equal: $\Sigma(\G')=\Sigma(\G)$ in Case I. This proves, in particular, that

\underline{\it If $\G\in \Cn$ is a forest then $\Sigma(\G)= n$.}

Indeed, every forest can be obtained from an edgeless graph by adding edges which connect disjoint components.

\underline{Case II: $[p]=[q]$}. Thus $[p]'=[q]'$ and the four subcases are possible:
\begin{itemize}
\item[(1)] $[p]_{my}\ne[q]_{my}$, $[p]_{mc}\ne[q]_{mc}$;
\item[(2)] $[p]_{my}=[q]_{my}$, $[p]_{mc}\ne[q]_{mc}$;
\item[($2'$)] $[p]_{my}\ne[q]_{my}$, $[p]_{mc}=[q]_{mc}$;
\item[(3)] $[p]_{my}=[q]_{my}$, $[p]_{mc}=[q]_{mc}$.
\end{itemize}
 
\underline{Subcase (1):} $[p]=[q]$, $[p]_{my}\ne[q]_{my}$, $[p]_{mc}\ne[q]_{mc}$. 

These conditions mean that there exists a path $p{-}q$ in $\G$, but not in $\G_{my}$ or $\G_{mc}$. This means that some edge of the path $p{-}q$ must be {\it cyan}, and some other {\it yellow}. Thus adding a {\it magenta} edge $e$ between $p$ and $q$ does create a non-monochromatic cycle.

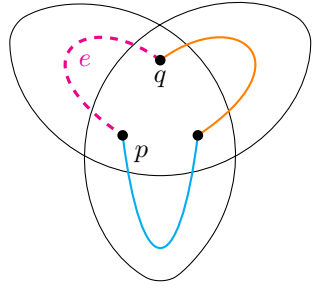
\begin{figure}[ht]
\begin{center}
\begin{tikzpicture}[scale=1.0]
\draw[rounded corners=6.33pt] (0,3.464) arc (60:-60:2) arc (240:60:2) arc (360:180:2) arc (120:60:2);
\draw (-0.25,1.75) node {$p$};
\draw (0,2.75) node {$q$};
\draw [very thick, dashed, magenta] (0,3) .. controls (-1,3.75) and (-2,3) .. (-0.5,2);
\draw [magenta] (-1,3) node {$e$};

\draw [thick, orange] (0,3) .. controls (1,3.75) and (2,3) .. (0.5,2);
\draw [thick, cyan] (-0.5,2) .. controls (-0.25,0) and (0.25,0) .. (0.5,2);

\fill (0,3) circle (2pt);
\fill (-0.5,2) circle (2pt);
\fill (0.5,2) circle (2pt);
\end{tikzpicture}
\end{center}
\caption{Case II, subcase (1).}
\end{figure}

Computing the contribution of $[p]=[q]$ to $\Sigma(\G)$ and of $[p]'=[q]'$ to $\Sigma(\G')$ we see that:
\begin{align*}
\valmy'[p]'=&\valmy[p]-1,\\
\valmc'[p]'=&\valmc[p]-1,\\
\valyc'[p]'=&\valyc[p],
\end{align*}
and
\begin{align*}
\Sigma(\G)=&\sum_{[p]\in CC(\G)}\big(\valmy[p]+\valyc[p]+\valmc[p]-2\big)\\
\Sigma(\G')=&\sum_{[p]'\in CC(\G')}\big(\valmy'[p]'+\valyc'[p]'+\valmc'[p]'-2\big).
\end{align*}
Hence, $\Sigma(\G')=\Sigma(\G)-2$.

Subcases (2) and ($2'$) are symmetric, so we consider only subcase (2).

\underline{Subcase (2):} $[p]=[q]$, $[p]_{my}=[q]_{my}$, $[p]_{mc}\ne[q]_{mc}$. 

This means that $p$ and $q$ are connected by a path $p{-}q$ in $\G_{my}$, but not in $\G_{mc}$. In particular, every such path $p{-}q$ must contain a {\it yellow} edge (otherwise all edges of $p{-}q$ would be {\it magenta} and and $p{-}q$ would lie in $\G_{mc}$).
Thus adding a new {\it magenta} edge $e$ between $p$ and $q$ does create a non-monochromatic cycle.

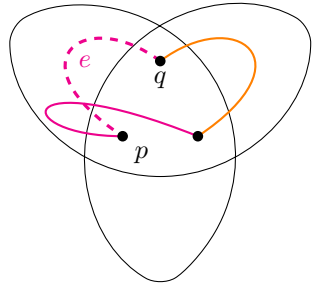
\begin{figure}[ht]
\begin{center}
\begin{tikzpicture}[scale=1.0]
\draw[rounded corners=6.33pt] (0,3.464) arc (60:-60:2) arc (240:60:2) arc (360:180:2) arc (120:60:2);
\draw (-0.25,1.75) node {$p$};
\draw (0,2.75) node {$q$};
\draw [very thick, dashed, magenta] (0,3) .. controls (-1,3.75) and (-2,3) .. (-0.5,2);
\draw [magenta] (-1,3) node {$e$};

\draw [thick, orange] (0,3) .. controls (1,3.75) and (2,3) .. (0.5,2);
\draw [thick, magenta] (-0.5,2) .. controls (-2,2) and (-2,3) .. (0.5,2);

\fill (0,3) circle (2pt);
\fill (-0.5,2) circle (2pt);
\fill (0.5,2) circle (2pt);
\end{tikzpicture}
\end{center}
\caption{Case II, subcase (2).}
\end{figure}

Computing the contribution of $[p]=[q]$ to $\Sigma(\G)$ and of $[p]'=[q]'$ to $\Sigma(\G')$ in this case, we observe that:
\begin{align*}
\valmy'[p]'=&\valmy[p],\\
\valmc'[p]'=&\valmc[p]-1,\\
\valyc'[p]'=&\valyc[p],
\end{align*}
hence $\Sigma(\G')=\Sigma(\G)-1$.

\underline{Subcase (3):} $[p]=[q]$, $[p]_{my}=[q]_{my}$, $[p]_{mc}=[q]_{mc}$. 

\underline{Situation (3a):} Every path $p{-}q$ consists entirely of {\it magenta} edges. In this case, adding a new {\it magenta} edge does not create a non-monochromatic cycle. And in this case,
\begin{align*}
\valmy'[p]'=&\valmy[p],\\
\valmc'[p]'=&\valmc[p],\\
\valyc'[p]'=&\valyc[p],
\end{align*}
so that $\Sigma(\G')=\Sigma(\G)$.

\underline{Situation (3b):} There exists a path $p{-}q$ having a {\it non-magenta} edge, let it be {\it yellow}. Then there is a path $pPq$ in $\G_{my}$ with at least one edge {\it yellow} and a path $pQq$ in $\G_{mc}$. Let $p{-}p'$ be the maximal by inclusion common initial subpath of $pPq$ and $pQq$ consisting entirely of {\it magenta} edges. Similarly, let $q'{-}q$ be the maximal by inclusion common terminal subpath of $pPq$ and $pQq$ consisting entirely of {\it magenta} edges. Thus, we can denote $pPq=p{-}p'P'q'{-}q$ and $pQq=p{-}p'Q'q'{-}q$ for some subpaths $P',Q'$. We observe at once that the union of the paths $p'P'q'$ and $p'Q'q'$ is a non-monochromatic cycle already existing in $\G$.
\begin{figure}[ht]
\begin{center}
\begin{tikzpicture}[thick]
\draw [magenta] (-3,0)--(-1.5,0);
\draw [magenta] (1.5,0)--(3,0);
\draw (-1.5,0) to [out=45, in=180] (0,1);
\draw [orange] (1.5,0) to [out=135, in=0] (0,1);
\draw [cyan] (-1.5,0) to [out=-45, in=180] (0,-1);
\draw (0,-1) to [out=0, in=-135] (1.5,0);

\fill (-3,0) circle (1.5pt); \draw (-3,-0.35) node {$p$};
\fill (-1.5,0) circle (1.5pt); \draw (-1.6,-0.35) node {$p'$};
\fill (1.5,0) circle (1.5pt); \draw (1.6,-0.35) node {$q'$};
\fill (3,0) circle (1.5pt); \draw (3,-0.35) node {$q$};
\fill (0,1) circle (1.5pt); \draw (0.5,1.25) node {$P'$};
\fill (0,-1) circle (1.5pt); \draw (-0.5,-1.25) node {$Q'$};
\end{tikzpicture}
\end{center}
\caption{Case II, subcase (3), situation (3b).}
\end{figure}
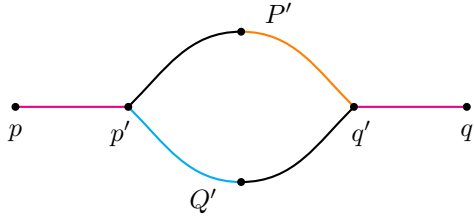

We notice that, in this situation, adding a {\it magenta} edge $e$ to $p,q$ does not change $\Sigma(\G)$ since, as before,
\begin{align*}
\valmy'[p]'=&\valmy[p],\\
\valmc'[p]'=&\valmc[p],\\
\valyc'[p]'=&\valyc[p],
\end{align*}
so that $\Sigma(\G')=\Sigma(\G)$.

However, we can show by an inductive reasoning on the number of edges that in the situation (3b) we have $\Sigma(\G)$ already less than $n$. 

Indeed, we can construct $\G$ from the edgeless graph on $n$ vertices $\G_0$ by adding edges one at a time. We get a sequence of graphs:
\[
\G_0,\G_1,\G_2,\dots, \G_i,\dots, \G_m=\G,
\]
all belonging to $\Cn$ and such that for each $i$, $\G_{i+1}=\G_{i}\cup e_{i+1}$ for a new edge $e_{i+1}$.

If $\G_i$ is a forest, we showed above that $\Sigma(\G_i)=\Sigma(\G_0)=n$.

Suppose that we have already proved by induction on the number $k$ of edges that if $\G_i$ has all cycles monochromatic and $|E(\G_i)|\le k$, then $\Sigma(\G_i)=n$. Consider a new edge $e_{i+1}$ such that $\G_{i+1}=\G_i\cup e_{i+1}$. 

If $e_{i+1}$ does not create a cycle, then $e_{i+1}$ joins two components of $\G_i$, and $\G_{i+1}$ is a forest. Hence, $\Sigma(\G_{i+1})=\Sigma(\G_i)$.

If $e_{i+1}$ creates a non-monochromatic cycle then we are in the subcase (1), (2) or ($2'$) above, and we see that in this case $\Sigma(\G_{i+1})<\Sigma(\G_i)$. 

If $e_{i+1}$ creates a monochromatic cycle, i.e.\ $e_{i+1}$ joins vertices $p,q\in V(\G)$, and there exists a path $p{-}q$ in $\G_i$ of the same color as $e_{i+1}$, then we are in the situation (3a), and $\Sigma(\G_{i+1})=\Sigma(\G_i)$.

The last two cases are mutually exclusive, since, by the inductive hypothesis, $\G_i$ does not have a non-monochromatic cycle.

This inductive reasoning, together with the consideration of the cases and all the subcases above, shows the following:
\begin{itemize}
\item adding an edge to a graph can only decrease the value of $\Sigma$;
\item graphs $\G$ with all cycles monochromatic have $\Sigma(\G)=n$;
\item creating a non-monochromatic cycle (when there were none) decreases the value of $\Sigma$ by 1 or 2.
\end{itemize}

This proves the Proposition.
\end{proof}

\section{Proof of Theorems~\ref{thm:h2} and \ref{thm:guzman4}.\label{sec:guzman4}}

We will prove Theorem~\ref{thm:guzman4} first and Theorem~\ref{thm:h2} at the end of the section. We start with translating Theorem~\ref{thm:guzman4} into a statement about the Dicks graphs. 
\begin{prop}\label{prop:recast}
Let $F$ be a free group, and suppose there exist subgroups $H,K\le F$ such that\/ $\rk(H)$, $\rk(K)\ge 2$, $\rk(H\vee K)=\rk(H)+\rk(K)-i$, and\/ $\rk(H\cap K)=\frac{i(i-1)}{2}+1$, for some $i\ge 3$. Then the corresponding Dicks graphs have the following properties:
\begin{enumerate} 
\item One component of\/ $\Omega_{abc}$ is isomorphic to the complete bipartite graph $K_{i,i-1}$, while all others are singleton vertices.
\item Every cycle of\/ $\Omega=\Omega_{ab}\bigvee_{\Omega_{abc}}\Omega_{bc}\bigvee_{\Omega_{abc}}\Omega_{ac}$ lies entirely in one of the subgraphs\/ $\Omega_{ab}$, $\Omega_{bc}$, or\/ $\Omega_{ac}$ (with different cycles possibly lying in different subgraphs). 
\end{enumerate}
\end{prop}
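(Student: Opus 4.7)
The plan is to translate the numerical hypotheses into combinatorial constraints on $\Omega_{abc}$ via Theorem~\ref{thm:ranks}, and then extract the structural conclusion by a short convexity argument. Setting $h=\rk(H)$, $k=\rk(K)$, the hypotheses give $\rr(H\vee K)=h+k-i-1$ and $\rr(H\cap K)=\tfrac{i(i-1)}{2}$, so by Theorem~\ref{thm:ranks} the graph $\Omega_{abc}$ is bipartite with parts of sizes $2(h-1)$ and $2(k-1)$, contains exactly $i(i-1)$ edges, and decomposes into $N\ge 2\rr(\T)\ge 2(h+k-i-1)$ connected components (the second inequality using that $\T$ folds onto $\G_{H\vee K}$). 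These three data already make the picture rigid: so many components on so few vertices leave the edges very little room to spread out.

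To make this precise, I will let $S$ denote the set of connected components of $\Omega_{abc}$ containing at least one edge, and for each $C\in S$ let $a_C,b_C\ge 1$ be the sizes of the two bipartite parts of $C$, so that $|V(C)|=a_C+b_C\ge 2$ and the edge count satisfies $e_C\le a_Cb_C\le\big\lfloor(a_C+b_C)^2/4\big\rfloor$. Subtracting the count of isolated vertices from the total vertex count yields
\[
\sum_{C\in S}(a_C+b_C-1)=2(h+k-2)-N\le 2i-2.
\]
Since $x\mapsto(x+1)^2$ is convex on $\{x\ge 1\}$, the sum $\sum_{C\in S}\big\lfloor(a_C+b_C)^2/4\big\rfloor$ under this constraint (with $|S|$ fixed) is maximized by concentrating all of the excess in a single summand, which yields the upper bound $\big\lfloor(2i-|S|)^2/4\big\rfloor+(|S|-1)$. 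This equals $i(i-1)$ exactly when $|S|=1$, while for $|S|\ge 2$ it is at most $(i-1)^2+1$, which is strictly less than $i(i-1)$ whenever $i\ge 3$. The case $|S|\ge 2$ therefore contradicts $\sum_{C\in S}e_C=i(i-1)$, forcing $|S|=1$.

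Saturating all three inequalities $e_C\le a_Cb_C\le\big\lfloor(a_C+b_C)^2/4\big\rfloor\le i(i-1)$ for the unique non-singleton component then pins down $\{a_C,b_C\}=\{i,i-1\}$ together with $e_C=a_Cb_C$, i.e.\ $C\cong K_{i,i-1}$; the remaining $2(h+k-i-1)-1$ components are isolated vertices, which is part~(1). The same vertex count gives $N=2(h+k-i-1)=2\rr(H\vee K)$, so the chain $N\ge 2\rr(\T)\ge 2\rr(H\vee K)=N$ collapses to equality, and in particular $N=2\rr(\T)$. The equality clause of Theorem~\ref{thm:ranks}(3) then delivers part~(2): every cycle of $\Omega$ lies entirely in one of $\Omega_{ab}$, $\Omega_{bc}$, or $\Omega_{ac}$.

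The one delicate point is the cutoff in the convexity calculation: the gap $i(i-1)-\bigl((i-1)^2+1\bigr)=i-2$ is positive precisely when $i\ge 3$, so the hypothesis $i\ge 3$ is exactly what the optimization needs, and this is also where the method would break down for smaller $i$.
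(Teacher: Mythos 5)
Your proof is correct, and it follows the same overall skeleton as the paper's: both translate the rank hypotheses through Theorem~\ref{thm:ranks} into the pair of constraints $\sum_{C\in S}(a_C+b_C-1)\le 2(i-1)$ (spanning-tree edges of the nontrivial components) and $\sum_{C\in S}e_C=i(i-1)$ (total edges), identify the unique extremal configuration as a single $K_{i,i-1}$ plus singletons, and then observe that extremality forces equality throughout the chain $N\ge 2\rr(\T)\ge 2\rr(H\vee K)$, so the equality clause of Theorem~\ref{thm:ranks}(3) delivers part~(2). Where you genuinely diverge is in the optimization step. The paper isolates it as Lemma~\ref{lem:forest} and proves it by a merging induction (following Ivanov): amalgamating two nontrivial components at a vertex preserves the total spanning-tree edge count while strictly increasing the attainable number of edges, so one reduces to a single component, which must then be $K_{m,m+1}$. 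You instead bound $e_C\le a_Cb_C\le\big\lfloor(a_C+b_C)^2/4\big\rfloor$ and run an extreme-point argument for the convex objective on the simplex $\{x_C\ge 1,\ \sum x_C\le 2i-2\}$, computing that the concentrated vertex gives $i(i-1)$ when $|S|=1$ and at most $(i-1)^2+1=i(i-1)-(i-2)$ when $|S|\ge 2$ (using that the bound is non-increasing in $|S|$ on the feasible range). Your route is more self-contained and makes the role of the hypothesis $i\ge 3$ completely transparent, since the slack $i-2$ is exactly what vanishes at $i=2$; the paper's merging lemma has the advantage of being a clean standalone statement about bipartite graphs, and the bookkeeping leading to it (inequalities (9)--(10)) is what gets recycled in the proof of Theorem~\ref{thm:h2}. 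Your endgame — saturating $a_C+b_C=2i-1$ and $a_Cb_C=i(i-1)$ to get $\{a_C,b_C\}=\{i,i-1\}$, counting $N=2\rr(H\vee K)$, and collapsing the inequality chain — matches the paper's.
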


\begin{proof}
Denote $h=\rr(H)$, $k=\rr(K)$. Then $\rr(H\cap K)=\frac{i(i-1)}{2}$ and from Theorem~\ref{thm:ranks} we conclude that $\Omega_{abc}$ is a bipartite graph with $2h$ vertices in one part and $2k$ vertices in the other, and that $\Omega_{abc}$ has exactly $i(i-1)$ edges. The condition $\rk(H\vee K)=\rk(H)+\rk(K)-i$ can be written in terms of the reduced ranks as $\rr(H\vee K)=h+k-(i-1)$. Also, from the discussion in subsection~\ref{ssec:tp} we know that $\rr(\T)\ge\rr(H\vee K)$. Thus, part (3) of Theorem~\ref{thm:ranks} gives us:
\begin{equation}\label{eq:ccO}
\text{\big(\# connected components of $\Omega_{abc}$\big)} \ge 2\rr(\T)\ge 2h+2k-2(i-1).
\end{equation}
Let $\Omega_{abc}$ have $p$ single-vertex components in the part corresponding to vertices from $V(\G_H)$, $q$ single-vertex components in the part corresponding to vertices from $V(\G_K)$, and $\ell$ components $C_j$, ($j=1,\dots,\ell$) each of which has at least one edge. Then inequality~\eqref{eq:ccO} implies:
\[
p+q+\ell\ge 2h+2k-2(i-1),
\]
or
\begin{equation}\label{eq:8}
(2h-p)+(2k-q)-\ell \le 2(i-1).
\end{equation}
Let $s_j$, $t_j$ denote the number of vertices of $C_j$ in each of the two parts of the bipartite graph $\Omega_{abc}$. Then we have the following equalities: $\sum_{j=1}^\ell s_j = 2h-p$, $\sum_{j=1}^\ell t_j = 2k-q$, and~\eqref{eq:8}, together with the condition $2\rr(H\cap K)=i(i-1)$, becomes:
\begin{align}
\sum_{j=1}^\ell (s_j + t_j - 1)&\le 2(i-1),\label{eq:9}\\
\sum_{j=1}^\ell \text{(\# edges of $C_j$)}&=i(i-1).\label{eq:10}
\end{align}
Notice that the quantity $s_j+t_j-1$ is the number of edges in a spanning tree of the component $C_j$. 
The following Lemma shows that the total number of edges in the left-hand side of~\eqref{eq:10} attains its maximum $i(i-1)$ under the constraint~\eqref{eq:9} if and only if $\ell=1$ and $C_1=K_{i,i-1}$. 

Let's call a component of a bipartite graph $\G$ having at least one edge, \emph{nontrivial}.
\begin{lem}\label{lem:forest}
Let $\G$ be a bipartite graph having $\ell$ nontrivial connected components $C_1,\dots,C_\ell$. Under the constraint:
\[
\sum_{j=1}^\ell (\text{\# edges of a spanning tree of $C_j$})\le 2m,
\]
for some $m\ge 2$, the maximal possible number of edges for\/ $\G$ is achieved when $\ell=1$ and $C_1=K_{m,m+1}$.
\end{lem}
\begin{proof}
(This is essentially the reasoning of Ivanov~\cite[(4.2)]{Iva} with few more details provided.) Let's show first that for any two components $C_1$, $C_2$ of $\G$ with the number of edges in their spanning trees $m_1$, $m_2$ respectively, a single component $C_0$ with a spanning tree having $m_1+m_2$ edges can have a bigger total number of edges than $C_1$ and $C_2$ together. Indeed, let $s_1,t_1$ be numbers of vertices in the two parts of $C_1$ and $s_2,t_2$ be numbers of vertices in the two parts of $C_2$ (we refer to the bipartite structure of $\G$ here). Then the maximal total number of edges in $C_1$ is $s_1t_1$ which is achieved when $C_1=K_{s_1,t_1}$ and, similarly, the maximal total number of edges in $C_2$ is $s_2t_2$.
Now let's consider a join $C_0$ of $C_1$ and $C_2$ at a pair of vertices either in one part or in the other. In the first case we will get a bipartite graph $C_0$ on $s_1+s_2-1$ vertices in one part and $t_1+t_2$ vertices in the other, and in the second case $C_0$ will have $s_1+s_2$ vertices in one part and $t_1+t_2-1$ ones in the other. In both cases, a spanning tree for $C_0$ will have $m_1+m_2$ edges, so that the total number of edges in all spanning trees remains invariant. Notice that the number of edges of the complete bipartite graph $K_{s_1+s_2-1,t_1+t_2}$ is
\[
(s_1+s_2-1)(t_1+t_2)=(s_1t_1+s_2t_2)+(s_1-1)t_2+(s_2-1)t_1 > s_1t_1+s_2t_2, \text{ if either $s_1\ge 2$ or $s_2\ge 2$.}
\]
Similarly, the number of edges of the complete bipartite graph $K_{s_1+s_2, t_1+t_2-1}$ is
\[
(s_1+s_2)(t_1+t_2-1)=(s_1t_1+s_2t_2)+(t_1-1)s_2+(t_2-1)s_1 > s_1t_1+s_2t_2, \text{ if either $t_1\ge 2$ or $t_2\ge 2$.}
\]
We conclude that joining two components $C_1$ and $C_2$ allows us to have a bigger total number of edges in $C_0$ than the sum of edges in $C_1$ and $C_2$, unless $s_1=s_2=t_1=t_2=1$, when joining two $K_{1,1}$'s produces a $K_{1,2}$ with the same number of edges. But since $2m>2$, we are going to deal with components having more than two edges, and we can proceed by induction, joining components together, and each time (after possibly joining two $K_{1,1}$'s the very first time) we increase the maximal possible number of edges while preserving the total  number of edges of all spanning trees. Hence we prove by induction that the maximal number of edges is achieved when there is only one nontrivial component, and it should be a complete bipartite graph $K_{s,t}$ with $s+t-1=2m$. Clearly, the number $st$ of edges of $K_{s,t}$ is maximized under the constraint $s+t-1=2m$ if and only if $s$ and $t$ are closest to being equal. Since $s$ and $t$ have opposite parity, we conclude that $s=m$ and $t=m+1$, or vice versa.
\end{proof}

Hence, part (1) is established. Notice also that the only solution to~\eqref{eq:9} and~\eqref{eq:10} implies the equality in~\eqref{eq:9}, and this is equivalent to having two equalities in~\eqref{eq:ccO}. Hence we may apply the last clause of part (3) of Theorem~\ref{thm:ranks}, which proves part (2).
\end{proof}

\bigskip
We will need the following graph theoretic construction. 

Let $\Omega=\Omega_{ab}\bigvee_{\Omega_{abc}}\Omega_{bc}\bigvee_{\Omega_{abc}}\Omega_{ac}=\Omega_u\sqcup\Omega_v$, $\Omega_a$, $\Omega_b$, $\Omega_c$ be the the Dicks graphs defined for given core graphs $\G_H$, $\G_K$, $\G_{H\cap K}$, and let $\tilde o$, $\tilde t$ be the embeddings of $\Omega_x\hookrightarrow \Omega_y$ ($x\in\{a,b,c\}$, $y\in\{u,v\}$) defined in Section~\ref{sec:dicks}. Let, as before, $A=\Omega_{ab}\bigvee_{\Omega_{abc}}\Omega_{ac}$, $B=\Omega_{ab}\bigvee_{\Omega_{abc}}\Omega_{bc}$, $C=\Omega_{ac}\bigvee_{\Omega_{abc}}\Omega_{bc}$. For any finite connected bipartite undirected graph $\Delta$ (with a fixed bipartite structure) define the following directed graph, which we will call the \emph{subgraph isomorphism graph} for $\Delta$, and denote it $\SIG(\Delta)$:

Vertices of $\SIG(\Delta)$:
\[
V\big(\!\SIG(\Delta)\big)=\{\text{subgraphs } \G\subset \Omega, \text{ such that } \G\subset A, \text{ or } \G\subset B, \text{ or } \G\subset C, \text{ and } \G\cong\Delta\},
\]
where $\cong$ is the isomorphism of bipartite graphs, i.e.\ it is required to send parts of bipartite structure of $\G$ (induced by that of $\Omega$) into the corresponding parts of $\Delta$.

Since $\Omega=\Omega_u\sqcup\Omega_v$, we may define the set of directed edges of $\SIG(\Delta)$ by specifying the stars of the `source' vertices $\G\subset\Omega_u$. Let $\G$ be a vertex of $\SIG(\Delta)$ such that $\G$, viewed as a subgraph of $\Omega$, lies in $A\cap\Omega_u$. Then $\G$ lies in the image under $\tilde o$ of some connected component $Q\subset \Omega_a$. Hence $\G'=\tilde t\circ (\tilde o|_Q)^{-1}(\G)$ is another vertex of $\SIG(\Delta)$, with $\G'\subset A\cap\Omega_v$. We connect $\G$ and $\G'$ in $\SIG(\Delta)$ with a directed edge labeled $a$ with the origin $\G$ and the terminus $\G'$. Similarly, if $\G\subset B\cap\Omega_u$, the star of $\G$ in $\SIG(\Delta)$ will have an outgoing $b$--edge, and if $\G\subset C\cap\Omega_u$, the star of $\G$ will have an outgoing $c$--edge, with their termini defined correspondingly. Thus, a vertex $\G$ of $\SIG(\Delta)$ may have valence $1$, $2$, or $3$, if $\G$ lies in only one of the subsets $A$, $B$, $C$, or in only two of them, or in all three, respectively. Clearly, the same is true for vertices $\G'\subset\Omega_v$.

In other words, edges of $\SIG(\Delta)$ are in $1{-}1$ correspondence with the subgraphs of $\Omega_a$, $\Omega_b$, $\Omega_c$ which are isomorphic to $\Delta$, with the restrictions of $\tilde o$, $\tilde t$ as the origin and the terminus maps.

Notice also that $\SIG(\Delta)$ admits a natural immersion into the topological pushout $\T$, since the vertices and edges of $\SIG(\Delta)$ are naturally mapped into the vertices and edges of $\T$, and this mapping is injective on stars.

Figure~\ref{fig:sig} shows the graph $\SIG(K_{1,1})$ for the Dicks graphs in Figure~\ref{fig:toppush}.

\begin{figure}[ht]
\begin{center}
\begin{tikzpicture}[scale=0.75]
\begin{scope}[xshift=-4.5cm]
\fill (0,0) circle (2.5pt); \fill (1,0) circle (2.5pt); \draw[thick] (0,0)--(1,0); 
\draw (0,-0.3) node {$8$}; \draw (1,-0.3) node {$2$};
\end{scope}
\begin{scope}[xshift=4.5cm]
\fill (0,0) circle (2.5pt); \fill (1,0) circle (2.5pt); \draw[thick] (0,0)--(1,0); 
\draw (0,-0.3) node {$7$}; \draw (1,-0.3) node {$3$};
\end{scope}
\begin{scope}[xshift=-2.5cm, yshift=3.46cm]
\fill (0,0) circle (2.5pt); \fill (1,0) circle (2.5pt); \draw[thick] (0,0)--(1,0); 
\draw (0,-0.3) node {$7$}; \draw (1,-0.3) node {$1$};
\end{scope}
\begin{scope}[xshift=2.5cm, yshift=3.46cm]
\fill (0,0) circle (2.5pt); \fill (1,0) circle (2.5pt); \draw[thick] (0,0)--(1,0); 
\draw (0,-0.3) node {$6$}; \draw (1,-0.3) node {$2$};
\end{scope}
\begin{scope}[xshift=2.5cm, yshift=-3.46cm]
\fill (0,0) circle (2.5pt); \fill (1,0) circle (2.5pt); \draw[thick] (0,0)--(1,0); 
\draw (0,-0.3) node {$6$}; \draw (1,-0.3) node {$4$};
\end{scope}
\begin{scope}[xshift=-2.5cm, yshift=-3.46cm]
\fill (0,0) circle (2.5pt); \fill (1,0) circle (2.5pt); \draw[thick] (0,0)--(1,0); 
\draw (0,-0.3) node {$5$}; \draw (1,-0.3) node {$3$};
\end{scope}

\draw[->-=0.56,color=mygreen,very thick] (-2.75,2.75) to (-4,0.75);
\draw[->-ii=0.56,color=blue,very thick] (-2.75,-3) to (-4,-0.75);
\draw[->-i=0.53,color=red,very thick] (5,-0.75) to (3.75,-3);
\draw[->-ii=0.56,color=blue,very thick] (5,0.75) to (3.75,2.75);
\draw[->-i=0.53,color=red,very thick] (-0.85,3.5) to (1.85,3.5);
\draw[->-=0.58,color=mygreen,very thick] (-0.85,-3.5) to (1.85,-3.5);

\begin{scope}[thick, xshift=0.25cm, yshift=2cm, xscale=0.3, yscale=0.25, red]
\fill (0,0) circle (6pt);
\fill (2,0) circle (6pt);
\fill (1,3) circle (6pt);
\draw (0,0)--(1,3);
\draw (2,0)--(1,3);
\draw [black] (-0.4,-0.9) node {\scriptsize(1,2)}; 
\draw [black] (2.4,-0.9) node {\scriptsize(3,4)}; 
\draw [black] (1,3.9) node {\scriptsize(7,6)}; 
\end{scope}

\begin{scope}[thick, xshift=5cm, yshift=-2.5cm, xscale=0.3, yscale=0.25, red]
\fill (0,0) circle (6pt);
\fill (2,0) circle (6pt);
\fill (1,3) circle (6pt);
\draw (0,0)--(1,3);
\draw (2,0)--(1,3);
\draw [black] (-0.4,-0.9) node {\scriptsize(1,2)}; 
\draw [black] (2.4,-0.9) node {\scriptsize(3,4)}; 
\draw [black] (1,3.9) node {\scriptsize(7,6)}; 
\end{scope}

\begin{scope}[thick, xshift=5cm, yshift=1.75cm, xscale=0.3, yscale=0.25, blue]
\fill (1,0) circle (6pt);
\fill (0,3) circle (6pt);
\fill (2,3) circle (6pt);
\draw (1,0)--(0,3);
\draw (2,3)--(1,0);
\draw [black] (1,-0.7) node {\scriptsize(3,2)}; 
\draw [black] (-0.4,3.8) node {\scriptsize(5,8)}; 
\draw [black] (2.4,3.8) node {\scriptsize(7,6)}; 
\end{scope}

\begin{scope}[thick, xshift=-5cm, yshift=-2.55cm, xscale=0.3, yscale=0.25, blue]
\fill (1,0) circle (6pt);
\fill (0,3) circle (6pt);
\fill (2,3) circle (6pt);
\draw (1,0)--(0,3);
\draw (2,3)--(1,0);
\draw [black] (1,-0.7) node {\scriptsize(3,2)}; 
\draw [black] (-0.4,3.8) node {\scriptsize(5,8)}; 
\draw [black] (2.4,3.8) node {\scriptsize(7,6)}; 
\end{scope}

\begin{scope}[thick, xshift=-4.25cm, yshift=1.8cm, xscale=0.3, yscale=0.25, mygreen]
\fill (0,0) circle (6pt);
\fill (0,3) circle (6pt);
\draw (0,0)--(0,3);
\draw [black] (0,-0.7) node {\scriptsize(1,2)}; 
\draw [black] (0,3.8) node {\scriptsize(7,8)}; 
\end{scope}

\begin{scope}[thick, xshift=-0.05cm, yshift=-2.75cm, xscale=0.3, yscale=0.25, mygreen]
\fill (2,0) circle (6pt);
\fill (2,3) circle (6pt);
\draw (2,0)--(2,3);
\draw [black] (2,-0.7) node {\scriptsize(3,4)}; 
\draw [black] (2,3.8) node {\scriptsize(5,6)}; 
\end{scope}

\end{tikzpicture}
\end{center}
\caption{The graph $\SIG(K_{1,1})$ for the Dicks graphs in Figure~\ref{fig:toppush}.\label{fig:sig}}
\end{figure}

\bigskip
An undirected graph $\G$ is called \emph{$k$--connected}, for $k\in\N$, if $\#V(\G)>k$ and $\G\setminus Y$ is connected for every $Y\subset V(\G)$ with $\#Y<k$. We will make use of the following global version of Menger's theorem, see~\cite[Th.\,3.3.6\,(i)]{Dies}.
\begin{menger}
A graph is $k$--connected if and only if it contains $k$ independent paths between any two vertices.\qed
\end{menger}
Recall that a \emph{path} in an undirected graph is a sequence $x_1e_1x_2e_2\dots x_ke_kx_{k+1}$ of pairwise distinct vertices $x_i$ and undirected edges $e_i$ such that for all $i$, vertices $x_i,x_{i+1}$ are incident to edge $e_i$. Two paths from $x$ to $y$ are \emph{independent} if they share no other vertices except $x$ and $y$.

Finally, we are ready to prove Theorem~\ref{thm:guzman4}. 

\begin{proof}[Proof of Theorem~\ref{thm:guzman4}]
Construct all the Dicks graphs $\Omega_\bullet$ for the core graphs $\G_H$, $\G_K$, $\G_{H\cap K}$. Then graphs $\Omega$ and $\Omega_{abc}$ satisfy conditions (1) and (2) of Proposition~\ref{prop:recast}. In particular, one connected component of $\Omega_{abc}$ is isomorphic to the complete bipartite graph $K_{i,i-1}$, and all other components are singleton vertices. Clearly, graph $K_{i,i-1}$ is $2$--connected, if $i\ge 3$. For the rest of the proof, let $\Delta$ denote $K_{i,i-1}$. (The proof will be valid for an arbitrary $2$--connected graph $\Delta$.) 

Consider the graph $\SIG(\Delta)$ built for the Dicks graphs constructed above. It has a single vertex of valence $3$, since, by part (1) of Proposition~\ref{prop:recast}, only one subgraph isomorphic to $\Delta$ exists in $\Omega_{abc}=A\cap B\cap C$. We claim that all other vertices of $\SIG(\Delta)$ have valence $2$.

Suppose the contrary, that there exists a vertex $\G$ of $\SIG(\Delta)$ which has valence $1$. This means that subgraph $\G\subset\Omega$ lies in only one of subgraphs $A$, $B$, $C$ of $\Omega$, let's say $\G\subset A=\Omega_{ab}\bigvee_{\Omega_{abc}}\Omega_{ac}$. Since $\G\not\subset B$ and $\G\not\subset C$, we conclude that $\G\cap (\Omega_{ab}\setminus\Omega_{abc})\ne\varnothing$ and $\G\cap (\Omega_{ac}\setminus\Omega_{abc})\ne\varnothing$.

If there exist vertices $p$, $q$ such that $p\in V(\G)\cap (\Omega_{ab}\setminus\Omega_{abc})$ and $q\in V(\G)\cap (\Omega_{ac}\setminus\Omega_{abc})$, then by Menger's theorem above, there exist two independent paths from $p$ to $q$, and their union is a cycle which does not lie entirely in either of the subgraphs $\Omega_{ab}$, $\Omega_{ac}$, $\Omega_{bc}$, thus contradicting condition~(2) of Proposition~\ref{prop:recast}.

Assume now that $V(\G)\cap (\Omega_{ab}\setminus\Omega_{abc})=\varnothing$ but there exists an edge $e\in E(\G)\cap (\Omega_{ab}\setminus\Omega_{abc})$. Let $s,t$ denote the vertices incident to the edge $e$. The last two conditions imply that $s,t\in V(\Omega_{abc})$. Form a new graph $\G'$ by subdividing the edge $e$ into a sequence $e_1pe_2$ of two edges $e_1, e_2$ and a new vertex $p$ such that the vertices $p$ and $s$ are incident to $e_1$ and the vertices $p$ and $t$ are incident to $e_2$. We claim that the graph $\G'$ obtained this way is also $2$--connected. First, observe that $s\ne t$ since $\Omega$ has no loops (being bipartite). If we remove the vertex $p$ from $\G'$ the result is the same as if we remove the edge $e$ from $\G$. Since $\G$ is $2$--connected, Menger's theorem guarantees the existence of another path $s{-}t$ in $\G$ which doesn't contain the edge $e$. Hence, $\G\setminus e$ is still connected, and so is $\G'\setminus p$. Also, the removal of any other vertex $p'\ne p$ from $\G'$ doesn't make the resulting graph disconnected. Indeed, for any two vertices $p_1$, $p_2$ of $\G$ there exist at least two independent paths between them, by Menger's theorem, and only one of them may contain $p'$. This means that $p_1$, $p_2$ are still connected via the other path in $\G\setminus p'$ and hence in $\G'\setminus p'$. Also, the vertex $p$ is connected to any other vertex of $\G'\setminus p'$ since $s\ne t$.

If $V(\G)\cap (\Omega_{ac}\setminus\Omega_{abc})\ne\varnothing$, pick a vertex $q$ in that subset. Otherwise, as before, there is some edge $e'\in E(\G)\cap (\Omega_{ac}\setminus\Omega_{abc})$, with the endpoints $s'\ne t'\in V(\Omega_{abc})$, and we perform the above operation of subdivision of edge again, applied to $e'$, thus obtaining another $2$--connected graph with $e'$ changed into $e'_1qe'_2$. (For simplicity, we will still denote this graph by $\G'$.)

Applying Menger's theorem again, we see that there exist two independent paths from $p$ to $q$ in $\G'$. Their union is a cycle in $\G'$, which has subpaths $se_1pe_2t$ and $s'e'_1qe'_2t'$, see Figure~\ref{fig:cycle}. Going back to the original graph $\G$ and replacing these subpaths with the subpaths $s\,e\,t$ and $s'e't'$, respectively, (the latter only if we performed the subdivision of edges twice), 
we get a cycle in the original graph $\G$ which does not lie entirely in either of the subgraphs $\Omega_{ab}$, $\Omega_{ac}$, $\Omega_{bc}$, thus contradicting condition~(2) of Proposition~\ref{prop:recast}. 

\begin{figure}[ht]
\begin{center}
\begin{tikzpicture}[scale=1.5]
\draw[xshift=1cm, rounded corners=13pt] (0,0) arc (0:120:2) arc (180:360:2) arc (60:180:2);
\draw [very thick, magenta] (-0.25,1.25) .. controls (-1,2) and (-2,1) .. (-0.75,0.25);
\fill[white] (-1.25,1.3) circle (1.5pt); \draw (-0.75,1.65) node {$e_1$}; \draw (-1.3,0.5) node {$e_2$};
\draw[thick] (-1.25,1.3) circle (1.5pt); \draw (-1.1,1.2) node {$p$};

\draw [very thick, orange] (0.25,1.25) .. controls (1,2) and (2,1) .. (0.75,0.25);
\fill[white] (1.25,1.3) circle (1.5pt); \draw (0.75,1.7) node {$e'_1$}; \draw (1.3,0.5) node {$e'_2$};
\draw[thick] (1.25,1.3) circle (1.5pt); \draw (1.1,1.2) node {$q$}; 

\fill (-0.25,1.25) circle (1.5pt); \draw (-0.3,1.1) node {$s$};
\fill (-0.75,0.25) circle (1.5pt); \draw (0.3,1.1) node {$s'$};
\fill (0.25,1.25) circle (1.5pt); \draw (-0.7,0.45) node {$t$};
\fill (0.75,0.25) circle (1.5pt); \draw (0.7,0.45) node {$t'$};

\draw [very thick, snake it] (-0.25,1.25) to [out=-30, in=210] (0.25,1.25);
\draw [very thick, snake it] (-0.75,0.25) to [out=-30, in=210] (0.75,0.25);

\draw (-1.7,1.5) node {$\Omega_{ab}$};
\draw (1.7,1.5) node {$\Omega_{ac}$};
\draw (0,0.65) node {$\Omega_{abc}$};

\end{tikzpicture}
\end{center}
\caption{A cycle in $\G'$ from the proof of Theorem~\ref{thm:guzman4}. 
\label{fig:cycle}}
\end{figure}
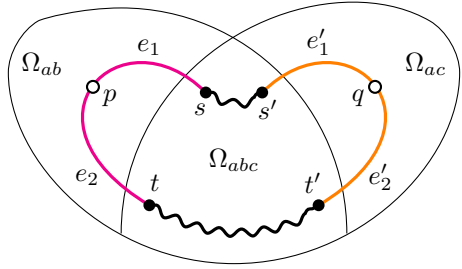

Therefore, all vertices of the graph $\SIG(\Delta)$ have valence $2$, except for a single vertex of valence $3$. But this is impossible, since in any graph the number of vertices of odd valence must be even (otherwise the count for the number of edges, $\#E^+(\G)=\frac12\sum_{v\in V(\G)}\val(v)$, would be a half-integer).

The obtained contradiction proves that the values of the ranks of $H$, $K$, $H\vee K$, and $H\cap K$ in Theorem~\ref{thm:guzman4} are non-realizable.
\end{proof}

{\sloppy
\begin{example}\label{ex:k23}
Interestingly, condition (1) alone in Proposition~\ref{prop:recast} does not make the Dicks graphs 
non-realizable, as the following example shows. Let subgroups $H,K\le \theta(F)\le F(a,b,c)$ be given by $H=\theta\big(\langle b,a^3,ab^{-1}a,ab^2a^{-1}\rangle\big)=\langle cb^{-1}, (ca^{-1})^3, ca^{-1}ba^{-1},ca^{-1}(cb^{-1})^2ac^{-1}\rangle$, $K=\theta\big(\langle a^{-1}b,b^{-2}ab^2\rangle\big)=
\langle ab^{-1},bc^{-1}ba^{-1}cb^{-1}cb^{-1}\rangle$. Then $\Omega_{abc}=K_{2,3}\cup\{\text{three vertices}\}$, see Figure~\ref{fig:k23}. The graph SIG($K_{2,3}$) has one vertex of valence~$3$ and three vertices of valence~$1$. Note also that $\text{(\# connected components of $\Omega_{abc}$)}=4>2=2\rr(\T)$, cf.~part~(3) of Theorem~\ref{thm:ranks}. 
\end{example}
}

\begin{figure}[!htp]
\begin{center}
\begin{tikzpicture}[thick, scale=1.0]
\begin{scope}[yshift=-0.5cm]
\draw[->-ii=0.6,color=blue] (1,-0.5) to [out=90, in=90,looseness=0.75] (2,-0.5);
\draw[->-=0.6,color=mygreen] (1,-0.5) to [out=-90, in=-90,looseness=0.75] (2,-0.5);
\draw[->-i=0.525,color=red] (1,-0.5) to [out=90, in=90,looseness=0.6] (4,-0.5);
\draw[->-=0.55,color=mygreen] (3,-0.5) to [out=90, in=90,looseness=0.6] (6,-0.5);
\draw[->-i=0.56,color=red] (3,-0.5) to [out=180, in=0,looseness=1] (2,-0.5);
\draw[->-ii=0.56,color=blue] (3,-0.5) to [out=0, in=180,looseness=1] (4,-0.5);
\draw[->-=0.56,color=mygreen] (5,-0.5) to [out=180, in=0,looseness=1] (4,-0.5);
\draw[->-i=0.55,color=red] (5,-0.5) to [out=90, in=90,looseness=0.75] (6,-0.5);
\draw[->-ii=0.6,color=blue] (5,-0.5) to [out=-90, in=-90,looseness=0.75] (6,-0.5);
\draw (1,-0.9) node {$1$};
\draw (2,-0.9) node {$2$};
\draw (3,-0.9) node {$3$};
\draw (4,-0.9) node {$4$};
\draw (5,-0.9) node {$5$};
\draw (6,-0.9) node {$6$};
\end{scope}

\begin{scope}[xshift=2.5cm]
\draw[->-=0.6,color=mygreen] (6,1) to [out=180, in=180,looseness=0.75] (6,2);
\draw[->-i=0.5,color=red] (6,1) to [out=0, in=0,looseness=0.75] (6,2);
\draw[->-ii=0.6,color=blue] (6,3) to [out=-90, in=90,looseness=0.75] (6,2);
\draw[->-=0.6,color=mygreen] (6,3) to [out=90, in=-90,looseness=0.75] (6,4);
\draw[->-ii=0.6,color=blue] (6,5) to [out=180, in=180,looseness=0.75] (6,4);
\draw[->-i=0.55,color=red] (6,5) to [out=0, in=0,looseness=0.75] (6,4);
\draw (6.4,5) node {$7$};
\draw (6.4,4) node {$8$};
\draw (6.4,3) node {$9$};
\draw (6.4,2) node {$10$};
\draw (6.4,1) node {$11$};
\draw (6,5) circle (2.5pt);
\draw (7,3) node {$\G_K$};
\end{scope}

\draw[->-ii=0.6,color=blue] (1,5)--(2,4);
\draw[->-ii=0.6,color=blue] (1,3)--(2,2);
\draw[->-ii=0.6,color=blue] (3,5)--(4,4);
\draw[->-ii=0.6,color=blue] (3,3)--(4,2);
\draw[->-ii=0.6,color=blue] (5,3)--(6,2);
\draw[->-ii=0.6,color=blue] (5,5) to [out=-90, in=180,looseness=0.65] (6,4);
\draw[->-=0.6,color=mygreen] (1,3) to (2,4);
\draw[->-=0.6,color=mygreen] (1,1) to (2,2);
\draw[->-=0.7,color=mygreen] (3,3) to (6,4);
\draw[->-=0.7,color=mygreen] (3,1) to (6,2);
\draw[->-=0.8,color=mygreen] (5,1) to (4,2);
\draw[->-=0.8,color=mygreen] (5,3) to (4,4);
\draw[->-i=0.75,color=red] (1,5)--(4,4);
\draw[->-i=0.75,color=red] (1,1)--(4,2);
\draw[->-i=0.75,color=red] (3,5)--(2,4);
\draw[->-i=0.75,color=red] (3,1)--(2,2);
\draw[->-i=0.55,color=red] (5,1)--(6,2);
\draw[->-i=0.55,color=red] (5,5) to [out=0, in=90,looseness=0.65] (6,4);

\foreach \x in {1,2,3,4,5,6}
{
	\fill (\x,-1) circle (1.5pt);
		
	\foreach \y in {1,2,3,4,5}
	{
			\fill (\x,\y) circle (1.5pt);
	}
}
\foreach \y in {1,2,3,4,5}
{
			\fill (8.5,\y) circle (1.5pt);
}		

\draw (1,-1) circle (2.5pt);
\draw (1,5) circle (2.5pt);

\begin{scope}[xshift=2.5cm, yshift=-0.5cm]
\draw[->-i=0.55,thick,color=red] (5.5,-0.5) to [out=90, in=90,looseness=0.75] (6.5,-0.5);
\draw[->-ii=0.6,thick,color=blue] (5.5,-0.5) to [out=0, in=180,looseness=0.75] (6.5,-0.5);
\draw[->-=0.6,thick,color=mygreen] (5.5,-0.5) to [out=-90, in=-90,looseness=0.75] (6.5,-0.5);
\fill (5.5,-0.5) circle (1.5pt);
\fill (6.5,-0.5) circle (1.5pt);
\draw (7.25,-0.5) node {$X\cong\T$};
\draw (5.5,-0.5) circle (2.5pt);
\end{scope}

\draw (0,-1) node {$\G_H$:};
\draw (0,3) node {$\G_{H\cap K}$:};

\begin{scope}[xshift=0.5cm]
\draw[->,semithick] (6.15,3) to (7.3,3); 
\draw (6.75,3.35) node {$\Pi_K$};
\draw[->,semithick] (3.5,0.45) to (3.5,-0.15); 
\draw (3.95,0.15) node {$\Pi_H$};
\draw[->,semithick] (6.05,-1) to (7.2,-1); 
\draw (6.6,-0.625) node {$p_H$};
\draw[->,semithick] (8,0.45) to (8,-0.35); 
\draw (8.5,0.05) node {$p_K$};
\end{scope}

\begin{scope}[xshift=1.5cm,yshift=2.5cm]
\draw (-2,-5) node {\textit{Legend:}};
\draw (0,-5) node {$a$--edges:};
\draw[->-i=0.58,color=red,thick] (0.85,-5) to (2,-5) [out=0, in=180];
\draw (3.45,-5) node {$b$--edges:};
\draw[->-ii=0.58,color=blue,thick] (4.3,-5) to (5.45,-5) [out=0, in=180];
\draw (6.75,-5) node {$c$--edges:};
\draw[->-=0.64,color=mygreen,thick] (7.60,-5) to (8.75,-5) [out=0, in=180];
\end{scope}

\begin{scope}[yshift=-5.5cm,xshift=-1cm,scale=0.5]
\fill (0,0) circle (3pt); \fill (1,0) circle (3pt); \fill (2,0) circle (3pt);
\fill (0,2) circle (3pt); \fill (1,2) circle (3pt); \fill (2,2) circle (3pt);
\draw (0,0)--(0,2)--(1,0)--(1,2)--(0,0);
\draw (0,0)--(2,2)--(2,0)--(0,2);
\draw (1,0)--(2,2); \draw (1,2)--(2,0);
\draw (-1,1) node {$\Omega_u$:};
\draw (0,-0.5) node {7};
\draw (1,-0.5) node {9};
\draw (2,-0.5) node {11};
\draw (0,2.5) node {1};
\draw (1,2.5) node {3};
\draw (2,2.5) node {5};
\end{scope}

\begin{scope}[yshift=-5.5cm,xshift=1.5cm,scale=0.5]
\fill (0.5,0) circle (3pt); \fill (1.5,0) circle (3pt);
\fill (0,2) circle (3pt); \fill (1,2) circle (3pt); \fill (2,2) circle (3pt);
\draw (0.5,0)--(0,2)--(1.5,0)--(1,2)--(0.5,0)--(2,2)--(1.5,0);
\draw (-1,1) node {$\Omega_v$:};
\draw (0.5,-0.5) node {8};
\draw (1.5,-0.5) node {10};
\draw (0,2.5) node {2};
\draw (1,2.5) node {4};
\draw (2,2.5) node {6};
\end{scope}

\begin{scope}[yshift=-5.5cm,xshift=4cm,scale=0.5,red]
\fill (0.5,0) circle (3pt); \fill (1.5,0) circle (3pt);
\fill (0,2) circle (3pt); \fill (1,2) circle (3pt); \fill (2,2) circle (3pt);
\draw (0.5,0)--(0,2)--(1.5,0)--(1,2)--(0.5,0)--(2,2)--(1.5,0);
\draw[black] (-0.75,1) node {$\Omega_a$:};
\draw[black] (0.25,-0.5) node {\scriptsize(7,8)};
\draw[black] (1.75,-0.5) node {\scriptsize(11,10)};
\draw[black] (-0.25,2.5) node {\scriptsize(1,4)};
\draw[black] (1,2.5) node {\scriptsize(3,2)};
\draw[black] (2.25,2.5) node {\scriptsize(5,6)};
\end{scope}

\begin{scope}[yshift=-5.5cm,xshift=6.5cm,scale=0.5,blue]
\fill (0.5,0) circle (3pt); \fill (1.5,0) circle (3pt);
\fill (0,2) circle (3pt); \fill (1,2) circle (3pt); \fill (2,2) circle (3pt);
\draw (0.5,0)--(0,2)--(1.5,0)--(1,2)--(0.5,0)--(2,2)--(1.5,0);
\draw[black] (-0.75,1) node {$\Omega_b$:};
\draw[black] (0.25,-0.5) node {\scriptsize(7,8)};
\draw[black] (1.75,-0.5) node {\scriptsize(9,10)};
\draw[black] (-0.25,2.5) node {\scriptsize(1,2)};
\draw[black] (1,2.5) node {\scriptsize(3,4)};
\draw[black] (2.25,2.5) node {\scriptsize(5,6)};
\end{scope}

\begin{scope}[yshift=-5.5cm,xshift=9cm,scale=0.5,mygreen]
\fill (0.5,0) circle (3pt); \fill (1.5,0) circle (3pt);
\fill (0,2) circle (3pt); \fill (1,2) circle (3pt); \fill (2,2) circle (3pt);
\draw (0.5,0)--(0,2)--(1.5,0)--(1,2)--(0.5,0)--(2,2)--(1.5,0);
\draw[black] (-0.75,1) node {$\Omega_c$:};
\draw[black] (0.25,-0.5) node {\scriptsize(9,8)};
\draw[black] (1.75,-0.5) node {\scriptsize(11,10)};
\draw[black] (-0.25,2.5) node {\scriptsize(1,2)};
\draw[black] (1,2.5) node {\scriptsize(3,6)};
\draw[black] (2.25,2.5) node {\scriptsize(5,4)};
\end{scope}

\begin{scope}[yshift=-7.75cm,xshift=0cm,scale=0.5]
\draw[magenta] (1,0)--(0,2); \draw[magenta] (1,0)--(1,2); \draw[magenta] (1,0)--(2,2); 
\fill (1,0) circle (3pt); 
\fill (0,2) circle (3pt); \fill (1,2) circle (3pt); \fill (2,2) circle (3pt);
\draw (-1,1) node {$\Omega_{u,ab}$:};
\draw (1,-0.5) node {7};
\draw (0,2.5) node {1};
\draw (1,2.5) node {3};
\draw (2,2.5) node {5};
\end{scope}

\begin{scope}[yshift=-7.75cm,xshift=2.5cm,scale=0.5]
\draw[orange] (1,0)--(0,2); \draw[orange] (1,0)--(1,2); \draw[orange] (1,0)--(2,2); 
\fill (1,0) circle (3pt); 
\fill (0,2) circle (3pt); \fill (1,2) circle (3pt); \fill (2,2) circle (3pt);
\draw (-1,1) node {$\Omega_{u,ac}$:};
\draw (1,-0.5) node {11};
\draw (0,2.5) node {1};
\draw (1,2.5) node {3};
\draw (2,2.5) node {5};
\end{scope}

\begin{scope}[yshift=-7.75cm,xshift=5cm,scale=0.5]
\draw[cyan] (1,0)--(0,2); \draw[cyan] (1,0)--(1,2); \draw[cyan] (1,0)--(2,2); 
\fill (1,0) circle (3pt); 
\fill (0,2) circle (3pt); \fill (1,2) circle (3pt); \fill (2,2) circle (3pt);
\draw (-1,1) node {$\Omega_{u,bc}$:};
\draw (1,-0.5) node {9};
\draw (0,2.5) node {1};
\draw (1,2.5) node {3};
\draw (2,2.5) node {5};
\end{scope}

\begin{scope}[yshift=-7.75cm,xshift=8.5cm,scale=0.5]
\fill (0.5,0) circle (3pt); \fill (1.5,0) circle (3pt);
\fill (0,2) circle (3pt); \fill (1,2) circle (3pt); \fill (2,2) circle (3pt);
\draw (0.5,0)--(0,2)--(1.5,0)--(1,2)--(0.5,0)--(2,2)--(1.5,0);
\draw (-1.5,1.75) node {$\Omega_{v,ab}$:};
\draw (-1.5,1) node {$\Omega_{v,ac}$:};
\draw (-1.5,0.25) node {$\Omega_{v,bc}$:};
\draw (3,1) node {$=\Omega_v$};
\draw (0.5,-0.5) node {8};
\draw (1.5,-0.5) node {10};
\draw (0,2.5) node {2};
\draw (1,2.5) node {4};
\draw (2,2.5) node {6};
\end{scope}

\begin{scope}[yshift=-10cm,xshift=2.5cm,scale=0.5]
\fill (0,2) circle (3pt); \fill (1,2) circle (3pt); \fill (2,2) circle (3pt);
\draw (1,0) node {$\varnothing$};
\draw (-1.5,1) node {$\Omega_{u,abc}$:};
\draw (0,2.5) node {1};
\draw (1,2.5) node {3};
\draw (2,2.5) node {5};
\end{scope}

\begin{scope}[yshift=-10cm,xshift=6cm,scale=0.5]
\fill (0.5,0) circle (3pt); \fill (1.5,0) circle (3pt);
\fill (0,2) circle (3pt); \fill (1,2) circle (3pt); \fill (2,2) circle (3pt);
\draw (0.5,0)--(0,2)--(1.5,0)--(1,2)--(0.5,0)--(2,2)--(1.5,0);
\draw (-1.5,1) node {$\Omega_{v,abc}$:};
\draw (0.5,-0.5) node {8};
\draw (1.5,-0.5) node {10};
\draw (0,2.5) node {2};
\draw (1,2.5) node {4};
\draw (2,2.5) node {6};
\end{scope}

\begin{scope}[yshift=-15cm,xshift=1cm,scale=0.75]
\draw [thin,rounded corners=5pt] (0,3.464) arc (60:-60:2) arc (240:60:2) arc (360:180:2) arc (120:60:2);
\draw [magenta] (-1.5,3.25)--(-0.67,2.25); \draw [magenta] (-1.5,3.25)--(0,2.25); \draw [magenta] (-1.5,3.25)--(0.67,2.25);
\draw [orange] (1.5,3.25)--(-0.67,2.25); \draw [orange] (1.5,3.25)--(0,2.25); \draw [orange] (1.5,3.25)--(0.67,2.25);
\draw [cyan] (0,0.5)--(-0.67,2.25); \draw [cyan] (0,0.5)--(0,2.25); \draw [cyan] (0,0.5)--(0.67,2.25);
\fill (-0.67,2.25) circle (2pt);
\fill (0,2.25) circle (2pt);
\fill (0.67,2.25) circle (2pt);
\fill (-1.5,3.25) circle (2pt);
\fill (1.5,3.25) circle (2pt);
\fill (0,0.5) circle (2pt);
\draw (-0.8,2) node {\scriptsize1};
\draw (-0.15,2) node {\scriptsize3};
\draw (0.75,2) node {\scriptsize5};
\draw (-1.7,3) node {\scriptsize7};
\draw (1.7,3) node {\scriptsize11};
\draw (-0.25,0.5) node {\scriptsize9};

\draw[->-i=0.52,color=red,very thick] (2.5,3.5) to [out=45, in=135] (7.5,3.5);
\draw[red] (5.1,4) node {$\Omega_a$};
\draw[->-ii=0.52,color=blue,very thick] (2.5,2) to [out=0, in=180] (7.5,2);
\draw[blue] (5.1,2.5) node {$\Omega_b$};
\draw[->-=0.52,color=mygreen,very thick] (2,1) to [out=-45, in=225] (8,1);
\draw[mygreen] (5.1,0.25) node {$\Omega_c$};

\begin{scope}[yshift=0cm,xshift=10cm]
\draw [thin,rounded corners=5pt] (0,3.464) arc (60:-60:2) arc (240:60:2) arc (360:180:2) arc (120:60:2);
\begin{scope}[yshift=0.35cm,yscale=0.8]
\fill (-0.5,2) circle (2pt);
\fill (0,2) circle (2pt);
\fill (0.5,2) circle (2pt);
\fill (-0.33,3) circle (2pt);
\fill (0.33,3) circle (2pt);
\draw (-0.33,3)--(-0.5,2)--(0.33,3)--(0.5,2)--(-0.33,3); \draw (-0.33,3)--(0,2)--(0.33,3);
\end{scope}
\draw (-0.5,1.725) node {\scriptsize2};
\draw (0,1.725) node {\scriptsize4};
\draw (0.5,1.725) node {\scriptsize6};
\draw (-0.33,3) node {\scriptsize8};
\draw (0.25,3) node {\scriptsize10};
\end{scope}

\draw (-3,2) node {$\T:$};

\end{scope}
\end{tikzpicture}
\end{center}
\caption{The core graphs, the Dicks graphs and the topological pushout for the subgroups $H$, $K$ from Example~\ref{ex:k23}. The graph $\Omega_{abc}$ is the union of $\Omega_{v,abc}=K_{2,3}$ and $\Omega_{u,abc}=\{\text{three singleton vertices $1,3,5$}\}$.\label{fig:k23}}
\end{figure}
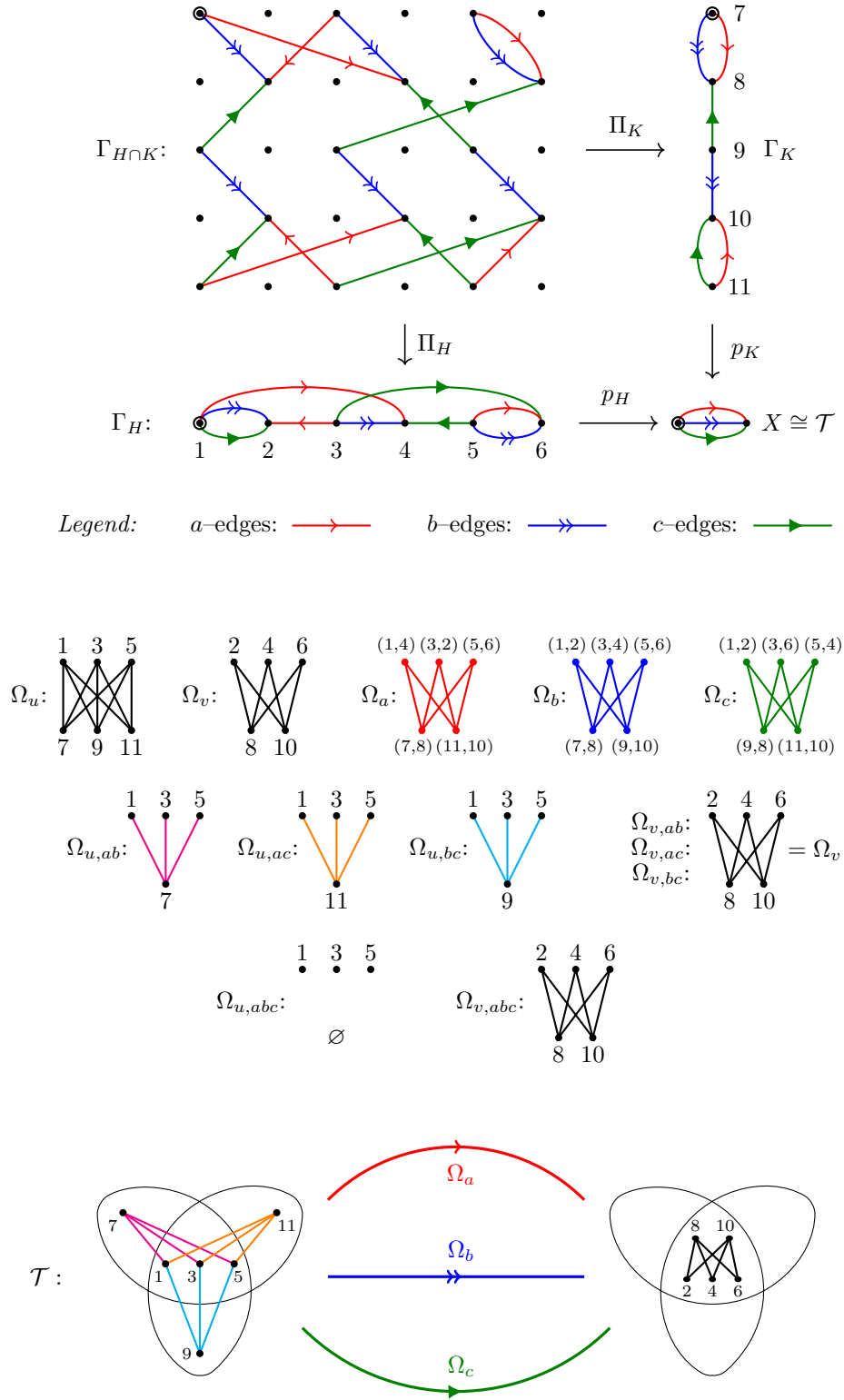

\bigskip
The case $i=3$ of Theorem~\ref{thm:guzman4} resolves the remaining open case $m=4$ of Guzman's ``Group-Theoretic Conjecture'' in the affirmative:
\begin{cor15}
Let $F$ be a free group. If two subgroups $H,K\le F$ both have ranks equal to\/ $4$, and\/ ${\rk (H\cap K)\ge 4}$, then\/ $\rk (H\vee K)\le 4$. 
\end{cor15}
\begin{proof}
Indeed, looking at the locus of known realizable values and the region of proved non-realizable values for $\rk(H)=\rk(K)=4$, see Figure~\ref{fig:guzman4}, we conclude that the GTC for $m=4$ holds true if and only if the tuple $\big(\!\rk(H\vee K), \rk(H\cap K)\big)=(5,4)$ is not realizable. But this is exactly what Theorem~\ref{thm:guzman4} says for $\rk(H)=\rk(K)=4$ and $i=3$.
\end{proof}

Invoking the implication theorem from~\cite{Gu}, we obtain a proof of the ``Geometric Conjecture'' for $k=6$:
\begin{cor16}
Let $M$ be a closed, orientable, hyperbolic $3$--manifold. If\/ $\pi_1(M)$ is\/ $6$--free then there exists a point $P$ in $M$ such that the set of all elements of\/ $\pi_1(M,P)$ represented by loops of length less than\/ $\log(11)$ is contained in a free subgroup of\/ $\pi_1(M)$ of rank at most $3$. \qed
\end{cor16}

\begin{figure}[ht]
\begin{center}
\begin{tikzpicture}[scale=1.0]
{\footnotesize
\draw (3.0cm,0.75cm) node {$\rk (H\cap K)$};
\draw (-0.3cm,-1.6cm) node [rotate=90] {$\rk H\vee K$};
}
{\scriptsize
\Ylinecolour{black!50}
\Yfillcolour{black!10}
\tgyoung(0cm,0cm,%
::::::::::::,%
::::::::::::,%
:::::::::;;;,%
:::::::;;;;;,%
:::::;?;;;;;;,%
::::;;;;;;;;,%
:::;;;;;;;;;,%
::;;;;;;;;;;%
)
\tgyoung(0cm,0cm,::0:1:2:3:4:5:6:7:8:9:<10>,%
:2;;;;;;;;;;;,%
:3;;;;;;;;,%
:4;;;;;;,%
:5;;;;,%
:6;;;,%
:7;;,%
:8;%
)

\Ylinecolour{black!25}
\Yfillcolour{black!50}
\Ynodecolour{white}
\tgyoung(0cm,0cm,::::::::::::::,%
:;;;;;;;;;;;,%
:;;;;;;;;,%
:;;;;;;,%
:;;;;,%
:;;;,%
:;;,%
:;%
)
\Ynodecolour{black}
\Yfillcolour{white}
\tgyoung(0cm,0cm,%
::::::::::::,%
::::::::::::,%
::::::::::::,%
:::::::::;;;,%
::::::;;;;;;,%
::::;;;;;;;;,%
:::;;;;;;;;;,%
::;;;;;;;;;;%
)
\begin{scope}[black,thick]
\draw (13pt,-91pt) -- ++(143pt,0) -- ++ (0,91pt) -- ++(-143pt,0) -- ++(0,-91pt);
\draw [red] (65pt,-91pt) -- ++(0,52pt) -- ++(91pt,0);
\end{scope}
}
\end{tikzpicture}
\caption{Guzman's GTC for $m=\rk(H)=\rk(K)=4$.\label{fig:guzman4}}
\end{center}
\end{figure}
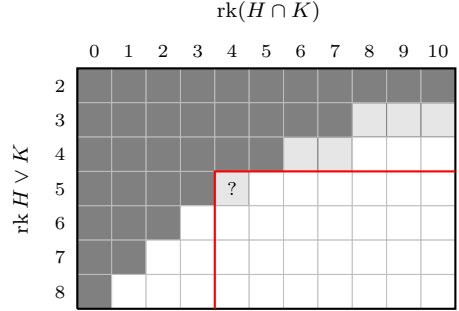

\bigskip
Now we are going to prove Theorem~\ref{thm:h2}.
\begin{proof}[Proof of Theorem~\ref{thm:h2}]
Let $k$, $v$ and $c$ be as in the statement of Theorem~\ref{thm:h2}. The existence of subgroups $H$, $K\le F$ with $\rk(H)=2$, $\rk(K)=k$, $\rk(H\vee K)=v$, and $\rk(H\cap K)=c$, such that $c\le k+2-v$, follows from Theorem~\ref{thm:real}. Let's prove that if $c>k+2-v$, such subgroups do not exist. 

Suppose the contrary, that subgroups $H$, $K$ with $\rk(H)=2$, $\rk(K)=k$, $\rk(H\vee K)=v$, $\rk(H\cap K)=c$, and $c>k+2-v$ do exist. Denote $i=k+2-v$ and let $d\ge1$ be such that $c=i+d$. From Theorem~\ref{thm:ranks} we see that $\Omega_{abc}$ is a bipartite graph with $2$ vertices in the $V(\G_H)$--part of $\Omega_{abc}$ and $2(k-1)$ vertices in the $V(\G_K)$--part (note that now $k$ denotes $\rk(K)$ so that $\rr(K)=k-1$), and that $\Omega_{abc}$ has $2(c-1)=2(i-1)+2d$ edges. Denote nontrivial components of $\Omega_{abc}$ as $C_1$, $C_2$, \dots, $C_\ell$ and call the edges of a spanning tree of $C_j$ the \emph{spanning edges} of $C_j$.  Arguing as in the proof of Proposition~\ref{prop:recast}, we get:
\begin{equation}\label{eq:11}
\sum_{j=1}^\ell \big(\text{\# spanning edges of $C_j$}\big)\le 2(i-1)
\end{equation}
and
\begin{equation}\label{eq:12}
\sum_{j=1}^\ell \big(\text{\# edges of $C_j$}\big)=2(i-1)+2d.
\end{equation}

Denote the two vertices in the $V(\G_H)$--part of $\Omega_{abc}$ as $z$ and $w$. We claim that $z$ and $w$ belong to the same nontrivial component $C_j$. Indeed, if $z$ and $w$ belong to different components, then these components are trees, and, in particular, the count in \eqref{eq:12} is equal to that of \eqref{eq:11}, which contradicts the condition $d\ge 1$. Hence, the component $C_j$ containing $z$ and $w$ is the only nontrivial component of $\Omega_{abc}$, i.e.\ $j=\ell=1$.

Denote 
\begin{align}
s&=\big(\text{\# spanning edges of $C_1$}\big),\label{eq:s}\\ 
m&=\big(\text{\# vertices of valence $2$ in the $V(\G_K)$--part of $\Omega_{abc}$}\big),\label{eq:m}\\
q&=\big(\text{\# singleton vertices in the $V(\G_K)$--part of $\Omega_{abc}$}\big),\label{eq:q}
\end{align}
see Figure~\ref{fig:Omega2}. 

\begin{figure}[ht]
\begin{center}
\begin{tikzpicture}
\fill (0,0) circle (2pt);
\fill (0.5,0) circle (2pt);
\fill (1,0) circle (2pt);
\fill (1.5,0) circle (2pt);
\fill (2,0) circle (2pt);
\fill (2.5,0) circle (2pt);
\fill (3,0) circle (2pt);
\fill (3.5,0) circle (2pt);
\fill (4,0) circle (2pt);
\fill (4.5,0) circle (2pt);
\fill (5,0) circle (2pt);
\fill (5.5,0) circle (2pt);
\fill (0.5,1) circle (2pt) node [above=5pt] {$z$};
\fill (2,1) circle (2pt) node [above=5pt] {$w$};
\draw [thick] (0,0)--(0.5,1)--(0.5,0)--(2,1)--(1,0)--(0.5,1)--(1.5,0)--(2,1)--(2,0)--(0.5,1);
\draw [thick] (2.5,0)--(2,1)--(3,0);
\draw [snake=brace,segment amplitude=2mm] (2.1,-0.15) -- (0.4,-0.15); \draw (1.25,-0.65) node {$m$}; 
\draw [snake=brace,segment amplitude=2mm] (5.6,-0.15) -- (3.4,-0.15); \draw (4.5,-0.65) node {$q$}; 
\draw (-1,0.5) node {$\Omega_{abc}:$};
\end{tikzpicture}
\caption{The graph $\Omega_{abc}$ when $\rk(H)=2$.\label{fig:Omega2}}
\end{center}
\end{figure}
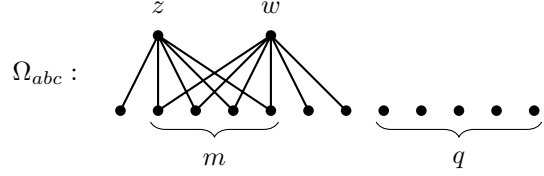

In particular, we see that $m\ge 2d+1$ (indeed, $m-1$ is the rank of $C_1$, i.\,e.\ the number of edges of $C_1$ minus the number of spanning edges of $C_1$, hence is at least $2d$), and that $\Omega_{abc}$ contains a subgraph $\Delta$ isomorphic to the complete bipartite graph $K_{2,m}$.

Consider the graph $\SIG(K_{2,m})$, as defined after the proof of Proposition~\ref{prop:recast}. Arguing as in the proof of Theorem~\ref{thm:guzman4}, we observe that $\SIG(K_{2,m})$ has a unique vertex of valence $3$, and hence must have another vertex of odd valence, that is of valence $1$. Let's call the subgraph of $\Omega$, corresponding to this valence $1$ vertex, $\Delta'$. Without loss of generality we can assume that $\Delta'$ lies in $A=\Omega_{ab}\bigvee_{\Omega_{abc}}\Omega_{ac}$, and hence that $\Delta'\cap \big(\Omega_{ab}\setminus\Omega_{abc}\big)\ne\varnothing$ and $\Delta'\cap \big(\Omega_{ac}\setminus\Omega_{abc}\big)\ne\varnothing$. We conclude that $\Delta'\cap\Omega_{abc}\ne\varnothing$, and, since the isomorphisms involved in the definition of $\SIG$ preserve the bipartite structure, the image of $m$ vertices of $K_{2,m}$ in $\Delta'$ (call this subset $M$) is a subset of the $q$ singleton vertices of the $V(\G_K)$--part of $\Omega_{abc}$. In particular, $m\le q$, see Figure~\ref{fig:val1}. 

\begin{figure}[ht]
\begin{center}
\begin{tikzpicture}[scale=1.0]
\draw[xshift=1cm, rounded corners=8.67pt] (0,0) arc (0:120:2) arc (180:360:2) arc (60:180:2);
\begin{scope}[yshift=5pt]
\fill (-0.75,0.25) circle (2pt); 
\fill (-0.25,0.25) circle (2pt);
\fill (0.25,0.25) circle (2pt);
\fill (0.75,0.25) circle (2pt); 
\fill (-1,1.3) circle (2pt); 
\fill (1,1.3) circle (2pt); 
\draw [thick] (-1,1.3)--(-0.75,0.25)--(1,1.3);
\draw [thick] (-1,1.3)--(-0.25,0.25)--(1,1.3);
\draw [thick] (-1,1.3)--(0.25,0.25)--(1,1.3);
\draw [thick] (-1,1.3)--(0.75,0.25)--(1,1.3);
\end{scope}
\draw (-1.6,1.3) node {$\scriptstyle\Omega_{ab}$};
\draw (1.6,1.3) node {$\scriptstyle\Omega_{ac}$};
\draw (0,1.35) node {$\scriptstyle\Omega_{abc}$};
\draw [snake=brace,segment amplitude=2mm] (0.8,0.3) -- (-0.8,0.3); \draw (0,-0.05) node {\scriptsize $m\le q$}; 
\end{tikzpicture}
\end{center}
\caption{The graph $\Delta'\cong K_{2,m}$ corresponding to the valence $1$ vertex of $\SIG(K_{2,m})$.
\label{fig:val1}}
\end{figure}
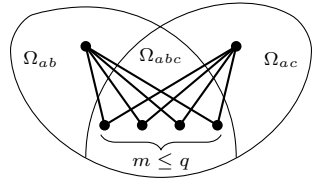

To estimate $\rr(\T)$ we recall equation~\eqref{eq:val} from Section~\ref{sec:dicks}:
\[
2\rr(\T)=\sum_{v \in V(\T)} \big(\!\val(v)-2\big).\tag{5}
\]
From Proposition~\ref{prop:toppush} we know that the vertices of $\T$ are connected components of $\Omega$. Vertices of $\T$ of valence $\ge 3$ correspond to certain subgraphs of $\Omega_{abc}$, with the exact relation between components of $\Omega_{abc}$ and the valence of the corresponding vertex of $\T$ given by the component connectivity graph $\G$, as constructed in the proof of Theorem~\ref{thm:ranks}. The vertices of $\G$ are connected components of $\Omega_{abc}$. Two vertices $p$, $q$ of $\G$ may be connected by up to three undirected edges (colored magenta, yellow, and cyan) in $\G$, if there exists an $\Omega_{abc}$-avoidant path connecting components $p$ and $q$ which lies entirely in $\Omega_{ab}$, $\Omega_{ac}$, or $\Omega_{bc}$, respectively. The right-hand sum of equation~\eqref{eq:val} equals the value of the function $\Sigma$ on $\G$, as defined in~\eqref{eq:sigma} of Section~\ref{sec:cn}. The main conclusion of Proposition~\ref{prop:cn} is that this value, and hence, the value $2\rr(\T)$, is bounded above by the number of vertices of $\G$.

The component connectivity graph $\G$ in the situation we are considering will have one vertex for the only nontrivial component $C_1$ of $\Omega_{abc}$ and $q$ vertices for the remaining singleton components. However, if we identify the $m$ vertices of $M$ in $\Delta'$ into a single vertex, thus forming a new graph $\G'$, we observe that the value of function $\Sigma$ on $\G$ and $\G'$ is the same. Indeed, these $m$ vertices belong to the same component in each of the subgraphs $A$, $B$, $C$ of $\Omega$, and hence their contribution to the number of $a$--edges, $b$--edges, and $c$--edges of $\T$ is the same as if they were a single vertex of $\Omega_{abc}$. Hence, we can use the graph $\G'$ for computing the quantity $2\rr(\T)$, and we conclude that the latter is bounded above by $n=\#V(\G')$. We now estimate $n$.

We have: $n=1+1+(q-m)$, where the ones correspond to the component $C_1$ and the subset $M$, which is one vertex of $\G'$. Since a spanning tree of $C_1$ contains only one vertex of valence $2$ in the $V(\G_K)$--part (for otherwise there would be a cycle in it), all other vertices of the spanning tree that lie in the $V(\G_K)$--part have valence $1$. Now from~\eqref{eq:q} and~\eqref{eq:s} above we deduce that:
\[
q=2(k-1)-(s-1).
\]
Also from~\eqref{eq:m} and~\eqref{eq:12} it follows that: 
\[
m=\big(\text{\# edges of $C_1$}\big)-\big(\text{\# spanning edges of $C_1$}\big) + 1=2(i-1)+2d-s+1.
\]
Hence, we get:
\[
n=2+q-m=2+2(k-1)-(s-1)-2(i-1)-2d+s-1=2(k-i-d+1),
\]
and we deduce from formula~\eqref{eq:val} above (and the discussion following it) that
\[
\rr(\T)\le k-i-d+1.
\]
Now recall that $v=\rk(H\vee K)=k+2-i$ and that $\rk(H\vee K)\le \rk(\T)=\rr(\T)+1$. We get:
\[
v=\rk(H\vee K)\le \rk(\T)=(k+2-i)-d = v-d,
\]
which yields a contradiction with the condition that $d\ge1$. Since $d$ was defined as $d=c-(k+2-v)$, this proves that $c+v\le k+2$. 
\end{proof}

\end{document}